\documentclass{amsart}

\usepackage{amsmath} 
\usepackage{amssymb}

\newcount\skewfactor
\def\mathunderaccent#1#2 {\let\theaccent#1\skewfactor#2
\mathpalette\putaccentunder}
\def\putaccentunder#1#2{\oalign{$#1#2$\crcr\hidewidth
\vbox to.2ex{\hbox{$#1\skew\skewfactor\theaccent{}$}\vss}\hidewidth}}
\def\name{\mathunderaccent\tilde-3 }

\newcommand{\forces}{\Vdash} 
\newcommand{\con}{{\mathfrak c}}

\newcommand{\can}{{}^{\omega}2}

\newcommand{\rest}{{\restriction}}
 
\newcommand{\rng}{{\rm rng}}
\newcommand{\conc}{{}^\frown\!}
\newcommand{\vtl}{\vartriangleleft} 
\newcommand{\vare}{\varepsilon}

\newcommand{\cf}{{\rm cf}}
\newcommand{\rk}{{\rm rk}}

\newcommand{\stnd}{{\rm stnd}}
\newcommand{\ndrk}{{\rm ndrk}}
\newcommand{\NDRK}{{\rm NDRK}}
\newcommand{\Mtk}{{{\mathbf M}_{\bar{T},k}}}
\newcommand{\fMtk}{{{\mathbf M}^n_{\bar{t},k}}}

\newcommand{\cA}{{\mathcal A}}

\newcommand{\cB}{{\mathcal B}}

\newcommand{\bbC}{{\mathbb C}}

\newcommand{\cL}{{\mathcal L}}

\newcommand{\cN}{{\mathcal N}}
\newcommand{\cM}{{\mathcal M}}

\newcommand{\bbM}{{\mathbb M}}
\newcommand{\bm}{{\mathbf m}}
\newcommand{\bn}{{\mathbf n}}

\newcommand{\bbP}{{\mathbb P}}
\newcommand{\bbQ}{{\mathbb Q}}

\newcommand{\bV}{{\mathbf V}}

\newcommand{\bbZ}{{\mathbb Z}}

\newtheorem{theorem}{Theorem}[section] 
\newtheorem{claim}{Claim}[theorem]
\newtheorem{lemma}[theorem]{Lemma} 
\newtheorem{proposition}[theorem]{Proposition} 
\newtheorem{corollary}[theorem]{Corollary} 
\newtheorem{observation}[theorem]{Observation} 

\theoremstyle{definition}
\newtheorem{problem}[theorem]{Problem} 
 
\newtheorem{definition}[theorem]{Definition}
\newtheorem{hypothesis}[theorem]{Hypothesis}

\theoremstyle{remark}

\newtheorem{remark}[theorem]{Remark}

\begin{document}

\title{Borel sets without perfectly many overlapping translations}

\author{Andrzej Ros{\l}anowski}
\address{Department of Mathematics\\
University of Nebraska at Omaha\\
Omaha, NE 68182-0243, USA}
\email{aroslanowski@unomaha.edu}

\author{Saharon Shelah}
\address{Institute of Mathematics\\
 The Hebrew University of Jerusalem\\
 91904 Jerusalem, Israel\\
 and  Department of Mathematics\\
 Rutgers University\\
 New Brunswick, NJ 08854, USA}
\email{shelah@math.huji.ac.il}
\urladdr{http://shelah.logic.at}

\thanks{Publication 1138 of the second author.}

\subjclass{Primary 03E35; Secondary: 03E15, 03E50}
\date{June, 2018}

\begin{abstract}
For a cardinal $\lambda<\lambda_{\omega_1}$ we give a ccc forcing notion $\bbP$
such that 
\[\begin{array}{l}
\forces_{\bbP}\mbox{`` some $\Sigma^0_2$   set  $B\subseteq\can$ admits a
    sequence  }\langle\eta_\alpha:\alpha<\lambda\rangle\mbox{ of distinct
    elements of }\can\\
\qquad \mbox{such that }\ \big|(\eta_\alpha+B)\cap (\eta_\beta+B)\big|\geq
    6\mbox{ for all  }\alpha,\beta<\lambda\\ 
\qquad \mbox{but does not have a perfect set of such $\eta$'s ''.}
\end{array}\]
The construction closely follows the one from Shelah \cite[Section
1]{Sh:522}. 
\end{abstract}

\maketitle 

\section{Introduction}
Shelah \cite{Sh:522} analyzed when there are Borel in the plane which
contain large squares but no perfect squares. A rank on models with a
countable vocabulary was introduced and a used to define a cardinal
$\lambda_{\omega_1}$ (the first $\lambda$ such that there is no model with
universe $\lambda$, countable vocabulary and rank $<\omega_1$).
It was shown in \cite[Claim 1.12]{Sh:522} that every Borel set $B\subseteq
\can\times\can$ which contains a $\lambda_{\omega_1}$--square must contain a
perfect square. On the other hand, by \cite[Theorem 1.13]{Sh:522}, if
$\mu=\mu^{\aleph_0}< \lambda_{\omega_1}$ then some ccc forcing notion forces 
that (the continuum is arbitrarily large and) some Borel set contains a
$\mu$--square but no $\mu^+$--square. 

We would like to understand what the results mentioned above mean for
general relations. Natural first step is to ask about Borel sets with
$\mu\geq \aleph_1$ pairwise disjoint translations but without any perfect
set of such translations, as motivated e.g.~by Balcerzak, Ros{\l}anowski and 
Shelah \cite{BRSh:512} (were we studied the $\sigma$--ideal of subsets of
$\can$ generated by Borel sets with a perfect set of pairwise disjoint
translations). A generalization of this direction could follow Zakrzewski
\cite{Zak13} who introduced perfectly $k$--small sets. 

However, preliminary analysis of the problem revealed that another, somewhat
orthogonal to the one described above, direction is more natural in the
setting of \cite{Sh:522}.  Thus we investigate Borel sets with many, but not
too many, pairwise overlapping intersections. 

Easily, every uncountable Borel subset $B$ of $\can$ has a perfect set of
pairwise non-disjoint translations (just consider a perfect set $P\subseteq
B$ and note that for $x,y\in P$ we have ${\mathbf 0},x+y\in (B+x)\cap
(B+y)$). The problem of many non-disjoint translations becomes more
interesting if we demand that the intersections have more elements. Note
that in $\can$, if $x+b_0=y+b_1$ then also $x+b_1=y+b_0$, so $x\neq y$ and
$|(B+x)\cap (B+y)|<\omega$ imply that $|(B+x)\cap (B+y)|$ is even. 

In the present paper we study the case when the intersections $(B+x)\cap
(B+y)$ have at least 6 elements. We show that for
$\lambda<\lambda_{\omega_1}$ there is a ccc forcing notion $\bbP$ adding a
$\Sigma^0_2$ subset $B$ of the Cantor space $\can$ such that    
\begin{itemize}
\item for some $H\subseteq \can$ of size $\lambda$, $|(B+h)\cap (B+h')|\geq
  6$ for all $h,h'\in H$, but 
\item for every perfect set $P\subseteq \can$ there are $x,x'\in P$ with 
$|(B+x)\cap (B+x')|<6$.
\end{itemize}

We fully utilize the algebraic properties of $(\can,+)$, in particular the
fact that all elements of $\can$ are self-inverse. The general case of
Polish groups will be investigated in the subsequent work
\cite{Sh:F1771}. 

In Section 2 of the paper we recall the rank from \cite{Sh:522}. We give the
relevant definitions, state and prove all the properties needed for our
results later. In the third section we analyze when a $\Sigma^0_2$ subset of
$\can$ has a perfect set of pairwise overlapping translations. The main
consistency result concerning adding a Borel set with no perfect set of
overlapping translations is given in the fourth section.
\medskip

\noindent{\bf Notation}:\qquad Our notation is rather standard and
compatible with that of classical textbooks (like Jech \cite{J} or
Bartoszy\'nski and Judah \cite{BaJu95}). However, in forcing we keep the
older convention that {\em a stronger condition is the larger one}.

\begin{enumerate}
\item For a set $u$ we let 
\[u^{\langle 2\rangle}=\{(x,y)\in u\times u:x\neq y\}.\]\
\item The Cantor space $\can$ of all infinite sequences with values 0 and 1
is equipped with the natural product topology and the group operation of
coordinate-wise addition $+$ modulo 2.   
\item Ordinal numbers will be denoted be the lower case initial letters of
  the Greek alphabet $\alpha,\beta,\gamma,\delta$. Finite ordinals
  (non-negative integers) will be denoted by letters
  $a,b,c,d,i,j,k,\ell,m,n,M$ and $\iota$. 
\item The Greek letters $\kappa,\lambda$ will stand for uncountable
  cardinals. 
\item For a forcing notion $\bbP$, all $\bbP$--names for objects in
  the extension via $\bbP$ will be denoted with a tilde below (e.g.,
  $\name{\tau}$, $\name{X}$), and $\name{G}_\bbP$ will stand for the
  canonical $\bbP$--name for the generic filter in $\bbP$.
\end{enumerate}

\section{The rank}
We will remind some basic facts from \cite[Section 1]{Sh:522} concerning a
rank (on models with countable vocabulary) which will be used in the
construction of a forcing notion in the next section. For the convenience of
the reader we provide proofs for most of the claims, even though they were
given in \cite{Sh:522}. Our rank $\rk$ is the $\rk^0$ of \cite{Sh:522} and
$\rk^*$ is the $\rk^2$ there.  
\bigskip

Let $\lambda$ be a cardinal and $\bbM$ be a model with the universe $\lambda$
and a countable vocabulary $\tau$.

\begin{definition}
\label{defofrank}
\begin{enumerate}
\item By induction on ordinals $\alpha$, for 
finite non-empty sets $w\subseteq\lambda$ we define when $\rk(w,\bbM)\geq  
\alpha$. Let $w=\{a_0,\ldots,a_n\} \subseteq\lambda$, $|w|=n+1$. 
 \begin{enumerate}
\item[(a)] $\rk(w)\geq 0$ if and only if for every quantifier free formula 
    $\varphi\in \cL(\tau)$ and each $k\leq n$, if 
$\bbM\models \varphi[a_0,\ldots,a_k,\ldots,a_n]$ then  the set 
\[\big\{a\in \lambda:\bbM\models \varphi[a_0,\ldots,a_{k-1},a,a_{k+1},
\ldots,a_n]\big\}\]  is 
uncountable;  
\item[(b)] if $\alpha$ is limit, then $\rk(w,\bbM)\geq\alpha$ if and only if 
  $\rk(w,\bbM)\geq\beta$ for all $\beta<\alpha$;  
\item[(c)] $\rk(w,\bbM)\geq\alpha+1$ if and only if for every quantifier free 
  formula $\varphi\in \cL(\tau)$ and each $k\leq n$, if $\bbM\models 
  \varphi[a_0,\ldots,a_k,\ldots,a_n]$ then there is $a^*\in\lambda\setminus 
  w$ such that 
\[\rk(w\cup\{a^*\},\bbM)\geq \alpha\quad\mbox{ and }\quad \bbM\models 
  \varphi[a_0,\ldots,a_{k-1},a^*,a_{k+1},\ldots,a_n].\]
 \end{enumerate}
\item Similarly, for finite non-empty sets $w\subseteq\lambda$ we define
  when $\rk^*(w,\bbM)\geq \alpha$ (by induction on ordinals $\alpha$). Let
  $w=\{a_0,\ldots,a_n\} \subseteq\lambda$. We take clauses (a) and (b) above
  and 
 \begin{enumerate}
\item[(c)$^*$] $\rk^*(w,\bbM)\geq\alpha+1$ if and only if for every quantifier free 
  formula $\varphi\in \cL(\tau)$ and each $k\leq n$, if $\bbM\models 
  \varphi[a_0,\ldots,a_k,\ldots,a_n]$ then there are pairwise distinct
  $\langle a^*_i:i<\omega_1\rangle \subseteq \lambda\setminus
  (w\setminus\{a_k\})$ such that $a_0^*=a_k$ and for all $i<j<\omega_1$ we
  have  
\[\rk^*(w\setminus\{a_k\}\cup\{a^*_i,a^*_j\},\bbM)\geq \alpha\quad\mbox{ and  
}\quad \bbM\models \varphi[a_0,\ldots,a_{k-1},a^*_i,a_{k+1},\ldots,a_n].\] 
\end{enumerate}
\end{enumerate}
\end{definition}

By a straightforward induction on $\alpha$ one easily shows the following
observation.  

\begin{observation}
\label{monot}
If $\emptyset\neq v\subseteq w$ then 
\begin{itemize}
\item  $\rk(w,\bbM)\geq\alpha\geq\beta$ implies $\rk(v,\bbM)\geq \beta$, and 
\item  $\rk^*(w,\bbM)\geq\alpha\geq\beta$ implies $\rk^*(v,\bbM)\geq \beta$.
\end{itemize}
\end{observation}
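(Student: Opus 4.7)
The plan is to prove both statements simultaneously by induction on $\alpha$: for every ordinal $\alpha$, every $\beta\leq\alpha$, and every pair $\emptyset\neq v\subseteq w\subseteq\lambda$ of finite sets, $\rk(w,\bbM)\geq\alpha$ should imply $\rk(v,\bbM)\geq\beta$, and likewise for $\rk^*$. The limit case of $\alpha$ will be immediate from clause (b). For the base case $\alpha=\beta=0$, which is the key computation, I would enumerate $w=\{a_0,\ldots,a_n\}$ and $v=\{a_{i_0},\ldots,a_{i_m}\}$ with $i_0<\ldots<i_m$ and, given a quantifier free formula $\varphi(x_0,\ldots,x_m)$ with $\bbM\models\varphi[a_{i_0},\ldots,a_{i_m}]$ and a position $k\leq m$, transport $\varphi$ to the formula $\psi(x_0,\ldots,x_n):=\varphi(x_{i_0},\ldots,x_{i_m})$ over the enumeration of $w$. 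Then $\bbM\models\psi[a_0,\ldots,a_n]$, and clause (a) applied to $\psi$ at position $i_k$ will produce an uncountable set of witnesses, which is precisely the set demanded for $v$.

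For the inductive step at a successor $\alpha=\alpha_0+1$, I would split on $\beta$. When $\beta\leq\alpha_0$, it will suffice to invoke clause (c) on $w$ with the trivial formula $x_0=x_0$ at position $0$ to extract some $a^*\in\lambda\setminus w$ with $\rk(w\cup\{a^*\},\bbM)\geq\alpha_0$; since $v\subseteq w\subseteq w\cup\{a^*\}$ and $\beta\leq\alpha_0$, the inductive hypothesis at $\alpha_0$ will yield $\rk(v,\bbM)\geq\beta$ at once. This detour through $w\cup\{a^*\}$ is what takes care of the downward-closure half of the observation. When $\beta=\alpha_0+1$, I would mimic the base-case substitution: for given $\varphi$ and $k$ pertaining to $v$, form $\psi$ as above, apply clause (c) on $w$ with $\psi$ at position $i_k$ to obtain $a^*\in\lambda\setminus w\subseteq\lambda\setminus v$ satisfying $\rk(w\cup\{a^*\},\bbM)\geq\alpha_0$ together with the appropriate instance of $\psi$, and then use the inductive hypothesis at $\alpha_0$ to pass from $w\cup\{a^*\}$ down to its subset $(v\setminus\{a_{i_k}\})\cup\{a^*\}$.

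The treatment of $\rk^*$ will follow exactly the same template: clause (c)$^*$ produces, instead of a single $a^*$, an $\omega_1$-sequence $\langle a^*_j:j<\omega_1\rangle\subseteq\lambda\setminus(w\setminus\{a_{i_k}\})\subseteq\lambda\setminus(v\setminus\{a_{i_k}\})$, and for each pair $i<j<\omega_1$ the inductive hypothesis at $\alpha_0$ will let me shrink the rank bound on $(w\setminus\{a_{i_k}\})\cup\{a^*_i,a^*_j\}$ down to its subset $(v\setminus\{a_{i_k}\})\cup\{a^*_i,a^*_j\}$. I do not foresee any real obstacle; the only bookkeeping is to spot that the substituted formula $\psi$ faithfully transports the condition from $v$ back to $w$, and to notice that witnesses chosen to avoid $w$ automatically avoid the smaller set $v$.
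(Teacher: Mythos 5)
Your overall strategy is exactly the one the paper has in mind — it gives no written proof and simply remarks that the observation follows ``by a straightforward induction on $\alpha$'', and your simultaneous induction on $\alpha$ with the formula-substitution $\psi(x_0,\ldots,x_n)=\varphi(x_{i_0},\ldots,x_{i_m})$ is the natural way to do it. The base case, the limit case, the tautology trick to get $\rk(w\cup\{a^*\},\bbM)\geq\alpha_0$ when $\beta\leq\alpha_0$, and the whole $\rk^*$ branch are all sound.

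There is one slip in the successor step for $\rk$ at $\beta=\alpha_0+1$. Clause (c) of Definition \ref{defofrank} requires you to exhibit $a^*\in\lambda\setminus v$ with $\rk(v\cup\{a^*\},\bbM)\geq\alpha_0$ — the element $a^*$ is \emph{added} to $v$, nothing is removed — whereas you apply the inductive hypothesis to the subset $(v\setminus\{a_{i_k}\})\cup\{a^*\}$ of $w\cup\{a^*\}$, which is the wrong set. You have carried over the ``drop $a_k$'' pattern from clause (c)$^*$, where it is correct (and you do use it correctly in your $\rk^*$ paragraph). The fix is immediate: $v\cup\{a^*\}$ is also a nonempty subset of $w\cup\{a^*\}$, so the same inductive hypothesis at $\alpha_0$ gives $\rk(v\cup\{a^*\},\bbM)\geq\alpha_0$, which together with the satisfaction of $\psi$ (i.e.\ $\varphi$) at $a^*$ is exactly what clause (c) for $v$ demands.
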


Hence we may define the rank functions on finite non-empty subsets of
$\lambda$. 

\begin{definition}
The ranks $\rk(w,\bbM)$ and $\rk^*(w,\bbM)$ of a finite non-empty set
$w\subseteq\lambda$ are defined as: 
 \begin{itemize}
\item $\rk(w,\bbM)=-1$ if $\neg (\rk(w,\bbM)\geq 0)$, and\\
$\rk^*(w,\bbM)=-1$ if $\neg (\rk^*(w,\bbM)\geq 0)$, 
\item $\rk(w,\bbM)=\infty$ if $\rk(w,\bbM)\geq \alpha$ for all ordinals $\alpha$,
  and\\
$\rk^*(w,\bbM)=\infty$ if $\rk^*(w,\bbM)\geq \alpha$ for all ordinals $\alpha$,
\item for an ordinal $\alpha$: $\rk(w,\bbM)=\alpha$ if $\rk(w,\bbM)\geq \alpha$
  but $\neg(\rk(w,\bbM)\geq\alpha+1)$,\\ 
and $\rk^*(w,\bbM)=\alpha$ if $\rk^*(w,\bbM)\geq \alpha$ but
  $\neg(\rk^*(w,\bbM)\geq\alpha+1)$.
  \end{itemize}
\end{definition}

\begin{definition}
  \begin{enumerate}
  \item For an ordinal $\vare$ and a cardinal $\lambda$ let ${\rm 
  NPr}_\vare(\lambda)$  be the following statement: ``there is a model 
$\bbM^*$ with the universe $\lambda$ and a countable vocabulary $\tau^*$ such 
that  $\sup\{\rk(w,\bbM^*):\emptyset\neq w\in [\lambda]^{<\omega}
\}<\vare$.''    
\item The statement ${\rm NPr}^*_\vare(\lambda)$ is defined similarly but
  using the rank $\rk^*$. 
\item ${\rm Pr}_\vare(\lambda)$ and ${\rm Pr}^*_\vare(\lambda)$ are the
  negations of ${\rm NPr}_\vare(\lambda)$ and ${\rm NPr}^*_\vare(\lambda)$,
  respectively.  
  \end{enumerate}

\end{definition}

\begin{observation}
\label{obsonrk}
  \begin{enumerate}
\item If a model $\bbM^+$ (on $\lambda$) is an expansion of the model $\bbM$, then
  $\rk^*(w,\bbM^+)\leq \rk(w,\bbM^+)\leq \rk(w,\bbM)$.  
\item If $\lambda$ is uncountable and ${\rm NPr}_\vare(\lambda)$, then
  there is a model $\bbM^*$ with the universe $\lambda$ and a countable
  vocabulary $\tau^*$ such that 
\begin{itemize}
\item $\rk(\{a\},\bbM^*)\geq 0$ for all $a\in \lambda$ and 
\item  $\rk(w,\bbM^*)<\vare$ for every finite non-empty set $w\subseteq \lambda$.
\end{itemize}
  \end{enumerate}
\end{observation}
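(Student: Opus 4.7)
For Part (1), my plan is to run two parallel inductions on $\alpha$. For the inequality $\rk(w,\bbM^+)\leq \rk(w,\bbM)$, I will show $\rk(w,\bbM^+)\geq\alpha$ implies $\rk(w,\bbM)\geq\alpha$: at the successor step, any q.f.\ $\tau$-formula $\varphi$ is also a $\tau^+$-formula, and $\bbM^+$ agrees with $\bbM$ on it, so any witness $a^*$ provided by $\rk(w,\bbM^+)\geq\alpha+1$ transports to $\bbM$ via the inductive hypothesis. For $\rk^*(w,\bbM^+)\leq\rk(w,\bbM^+)$, I again induct on $\alpha$. Assuming $\rk^*(w,\bbM^+)\geq\alpha+1$ and fixing $\varphi, k$ as in clause (c)$^*$, I obtain $\langle a^*_i : i<\omega_1\rangle$ and pick any $i\geq 1$. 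Then $a^*_i \neq a^*_0 = a_k$ and $a^*_i \in \lambda\setminus(w\setminus\{a_k\})$, so $a^*_i \in \lambda\setminus w$, and $w\setminus\{a_k\}\cup\{a^*_0,a^*_i\}=w\cup\{a^*_i\}$. The inductive hypothesis gives $\rk(w\cup\{a^*_i\},\bbM^+)\geq\alpha$, which together with $\bbM^+\models \varphi[\ldots,a^*_i,\ldots]$ witnesses $\rk(w,\bbM^+)\geq\alpha+1$.

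For Part (2), I would start with a model $\bbM$ witnessing ${\rm NPr}_\vare(\lambda)$ and set $X=\{a\in\lambda : \rk(\{a\},\bbM)<0\}$. The first task is to observe that $X$ is countable: $a\in X$ iff some q.f.\ one-variable formula $\varphi(x)$ satisfies $\bbM\models\varphi[a]$ with countable solution set $S_\varphi=\{b:\bbM\models\varphi[b]\}$, and since there are only countably many such $\varphi$'s (countable vocabulary), $X$ is a countable union of countable sets. Since $\lambda$ is uncountable, $|\lambda\setminus X|=\lambda$, so I fix a bijection $\pi: \lambda\to\lambda\setminus X$. By the usual reduction I may assume $\tau$ is purely relational so that restriction to a subset of $\lambda$ is well-defined. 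I then define $\bbM^*$ on $\lambda$ (with the same vocabulary) to be the isomorphic copy of $\bbM\rest(\lambda\setminus X)$ via $\pi$: $R^{\bbM^*}(\bar a)$ iff $R^\bbM(\pi(\bar a))$. The isomorphism yields $\rk(v,\bbM^*)=\rk(\pi[v],\bbM\rest(\lambda\setminus X))$ for every finite non-empty $v\subseteq\lambda$.

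To verify $\rk(\{a\},\bbM^*)\geq 0$: since $\pi(a)\in\lambda\setminus X$, for any q.f.\ $\varphi(x)$ with $\bbM\models\varphi[\pi(a)]$ the solution set $S_\varphi$ is uncountable in $\bbM$, so $S_\varphi\setminus X$ (the solution set in the restriction) remains uncountable. To verify $\rk(w,\bbM^*)<\vare$, I prove the restriction inequality $\rk(v,\bbM\rest Y)\leq \rk(v,\bbM)$ for $v\subseteq Y\subseteq\lambda$ by a routine induction on $\alpha$: witnesses from the restricted universe $Y$ are a fortiori witnesses in $\bbM$, and q.f.\ satisfaction is unchanged. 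Combining, $\rk(w,\bbM^*)\leq\rk(\pi[w],\bbM)<\vare$. The only delicate point is the preliminary WLOG reduction to relational vocabulary, which ensures the ``restriction to $\lambda\setminus X$'' makes sense; once that is in hand, the inductive arguments are essentially mechanical.
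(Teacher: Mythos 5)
The paper states this Observation without any proof, treating it as routine; there is therefore no paper argument to compare against, only the question of whether yours is sound.

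Both parts are correct. In Part (1), the inequality $\rk(w,\bbM^+)\leq\rk(w,\bbM)$ follows as you say, since a quantifier-free $\tau$-formula is still quantifier-free in $\tau^+$ and the two models agree on it. For $\rk^*(w,\bbM^+)\leq\rk(w,\bbM^+)$, your key observation at the successor step, that choosing $a_1^*$ from the $\omega_1$-sequence yields $w\setminus\{a_k\}\cup\{a_0^*,a_1^*\}=w\cup\{a_1^*\}$ (since $a_0^*=a_k$), is exactly what makes the inductive hypothesis applicable, and the side condition $\bbM^+\models\varphi[\ldots,a_1^*,\ldots]$ is supplied by clause (c)$^*$ (take $j=2>i=1$). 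The base case is immediate since clause (a) is shared. In Part (2), the countability of $X=\{a:\rk(\{a\},\bbM)=-1\}$ as a countable union of countable definable solution sets, the pushforward of $\bbM\rest(\lambda\setminus X)$ along a bijection $\pi:\lambda\to\lambda\setminus X$, and the restriction inequality $\rk(v,\bbM\rest Y)\leq\rk(v,\bbM)$ (routine induction: a witness in the submodel is a witness in $\bbM$, and q.f.\ satisfaction is unchanged) give exactly the required $\bbM^*$.

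One caution on the "WLOG relational vocabulary" step: the naive replacement of a function symbol by the relation symbol for its graph does \emph{not} preserve quantifier-free definability (nested terms would require existential quantifiers), so that standard reduction alone would alter the rank. The correct move, consistent with the paper's own convention (see the assumption built into $\name{\bbM}$ in the proof of Proposition~\ref{522.1.10}), is to pass to a relational vocabulary with a fresh symbol for \emph{each} quantifier-free $\tau$-formula; then quantifier-free formulas in the new vocabulary correspond exactly to quantifier-free formulas in the old, so all ranks are preserved, and restrictions make sense. Your proof is otherwise sound, but you should spell out this particular reduction rather than appeal to "the usual" one.
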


\begin{proposition}
[See {\cite[Claim 1.7]{Sh:522}}]
\label{cl1.7-522}
\begin{enumerate}
\item ${\rm NPr}_1(\omega_1)$.
\item If ${\rm NPr}_\vare(\lambda)$, then ${\rm
    NPr}_{\vare+1}(\lambda^+)$. 
\item If ${\rm NPr}_\vare(\mu)$ for $\mu<\lambda$ and
  $\cf(\lambda)=\omega$, then ${\rm NPr}_{\vare+1}(\lambda)$. 
\item ${\rm NPr}_\vare(\lambda)$ implies ${\rm NPr}^*_\vare(\lambda)$.
\end{enumerate}
\end{proposition}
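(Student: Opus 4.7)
My plan is to start with the two easy parts. For~(1), take $\bbM^*=(\omega_1,<)$: any pair $\{a,b\}$ with $a<b$ has $\rk=-1$ since $x_0<x_1$ is satisfied but has only countably many witnesses (the ordinals below $b$), and hence every singleton $\{a\}$ has rank exactly $0$ because no pair extension achieves rank $\geq 0$; so $\sup_w\rk(w,\bbM^*)=0<1$. For~(4), I claim the pointwise inequality $\rk^*(w,\bbM)\leq\rk(w,\bbM)$ for every model $\bbM$ and every finite nonempty $w$, so the same $\bbM$ witnesses both ${\rm NPr}_\vare(\lambda)$ and ${\rm NPr}^*_\vare(\lambda)$. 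This is shown by induction on $\xi$: in the successor case, if $\rk^*(w)\geq\xi+1$ and $\bbM\models\varphi[\vec a]$, clause~(c)$^*$ yields a pairwise distinct $\langle a^*_i:i<\omega_1\rangle$ with $a^*_0=a_k$; choosing $a^*:=a^*_1\in\lambda\setminus w$ we have $(w\setminus\{a_k\})\cup\{a^*_0,a^*_1\}=w\cup\{a^*\}$, whence $\rk^*(w\cup\{a^*\})\geq\xi$, and by the induction hypothesis $\rk(w\cup\{a^*\})\geq\xi$, verifying clause~(c) for $\rk(w)\geq\xi+1$.

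For~(2), given $\bbM$ on $\lambda$ with countable vocabulary $\tau$ and $\sup\rk(-,\bbM)=\vare^*<\vare$, I would fix bijections $h_\beta\colon\beta\to\lambda$ for each $\beta\in[\lambda,\lambda^+)$ and build $\bbM'$ on $\lambda^+$ with countable vocabulary $\{<,P\}\cup\tau\cup\{R':R\in\tau\}$: here $<$ is the standard order, $P$ is a unary predicate for $[\lambda,\lambda^+)$, each $n$-ary $R\in\tau$ is reinterpreted as in $\bbM$ on tuples from $\lambda$ and false on tuples containing any $P$-element, and $R'$ of arity $n{+}1$ is defined by $R'(\beta,\vec a)$ iff $P(\beta)$, $\vec a\in\beta^n$, and $\bbM\models R[h_\beta(\vec a)]$. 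The heart is a \emph{transfer lemma}: for finite $w\subseteq\lambda^+$ with $\alpha:=\max(w)\geq\lambda$ and $w^*:=h_\alpha[w\setminus\{\alpha\}]\subseteq\lambda$, one has $\rk(w,\bbM')\leq\rk(w^*,\bbM)$, proved by induction on the rank level: the successor step pulls back a witnessing qf formula $\psi$ on $\bbM$ through $h_\alpha$ at the position $n$ of $\alpha$ and conjoins $x_{k^*}<x_n$ (forcing any candidate successor-witness below $\alpha$, where $h_\alpha$ applies), while the base case is a counting argument. Granted the lemma: if $\max w\geq\lambda$ then $\rk(w,\bbM')\leq\vare^*$ at once; for $w\subseteq\lambda$, the formula $\neg P(x_0)$ at $k=0$ witnesses $\neg(\rk(w,\bbM')\geq\vare^*+2)$, since any candidate $a^*\geq\lambda$ is ruled out by the lemma and any candidate $a^*<\lambda$ is handled by iterating the same bound on the slightly larger $w\cup\{a^*\}$. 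Thus $\rk(w,\bbM')\leq\vare^*+1\leq\vare<\vare+1$ uniformly.

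Part~(3) proceeds along the same lines: fix $\mu_n\nearrow\lambda$, witnesses $\bbM_n$ on $\mu_n$ for ${\rm NPr}_\vare(\mu_n)$, a partition $\lambda=\bigsqcup_n L_n$ with $|L_n|=\mu_n$ and bijections $g_n\colon L_n\to\mu_n$, and add a level function $\ell\colon\lambda\to\omega$ (equivalently, countably many unary predicates $L_n$). The transfer lemma has the same shape, with the formula $\ell(x_{k^*})=n$ in place of $x_{k^*}<x_n$ confining candidate witnesses to the correct level where $g_n$ applies. The main obstacle throughout~(2) and~(3) is precisely this bookkeeping --- arranging the countable vocabulary and the syntactic restriction inside the transfer lemma so that every candidate successor-witness is forced into the range where the chosen bijection applies, while making sure the ``$+1$'' cost of a single jump across the bijection boundary is absorbed into the conclusion. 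Once that is in hand, the inductions run routinely.
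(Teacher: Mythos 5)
Parts~(1) and~(4) are correct and follow the paper's route. For~(1) you use essentially the same model (the paper calls it $\bbM_1$ with a binary $Q$ for the order), and for~(4) you explicitly prove the pointwise inequality $\rk^*\leq\rk$ that the paper dispatches via Observation~\ref{obsonrk}(1); your treatment of clause~(c)$^*$, choosing $a^*:=a^*_1$, is sound.

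Part~(2) has two genuine gaps. First, your transfer lemma does not apply to a singleton $w=\{\gamma\}$ with $\gamma\in[\lambda,\lambda^+)$: there $w\setminus\{\gamma\}=\emptyset$ and $w^*$ is empty, which is outside the domain of $\rk$. The paper must handle this separately (Claim~\ref{cl1}(iii)): assume $\rk(\{\gamma\},\bbM^+)\geq\vare+1$, apply the successor clause to the formula $T(x_0)$ to obtain $\gamma'\neq\gamma$ with $T(\gamma')$ and $\rk(\{\gamma,\gamma'\},\bbM^+)\geq\vare$, then invoke the two-element case of the transfer lemma to contradict. This is also the reason the final bound is $\leq\vare$ rather than $<\vare$, and you cannot get $\leq\vare^*$ ``at once'' for these $w$. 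Second, your argument for $w\subseteq\lambda$ is circular. To certify $\neg(\rk(w,\bbM')\geq\vare^*+2)$ via $\neg P(x_0)$ at $k=0$, you must show $\rk(w\cup\{a^*\},\bbM')\leq\vare^*$ for every candidate $a^*<\lambda$; but ``iterating the same bound'' on $w\cup\{a^*\}\subseteq\lambda$ only redelivers $\leq\vare^*+1$, one level short. (Also, candidates $a^*\geq\lambda$ are excluded because they fail $\neg P$, not ``by the lemma''.) What is actually needed is the sharper, set-dependent bound $\rk(w,\bbM')\leq\rk(w,\bbM)$ for all nonempty finite $w\subseteq\lambda$, proved by induction on the rank level using the uniform conjunct $\bigwedge_{i\leq n}\neg P(x_i)$ so that every candidate at \emph{every} position stays below $\lambda$ and the inductive hypothesis applies to the enlarged set; this is the paper's Claim~\ref{cl1}(ii). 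Without it the case $w\subseteq\lambda$ is not closed.

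On part~(3), you deviate from the paper by partitioning $\lambda=\bigsqcup_n L_n$ with separate bijections $g_n$, rather than letting the $\mu_n$ nest (the paper interprets $P_n^{\bbM^+}=\mu_n$ and simply reuses $\bbM_n$ on $\mu_n$, so that every finite $w$ sits inside a single $\mu_n$). With a partition, a finite $w$ generally meets several $L_n$, the map $\bigsqcup g_n$ does not carry $w$ into a single $\bbM_n$, and the proposed formula $\ell(x_{k^*})=n$ only pins down one coordinate; so the shape of your transfer lemma no longer matches. You would do better to adopt the paper's non-partitioned construction, where the proof reduces to the same induction as in the $w\subseteq\lambda$ case of~(2) with $P_n$ in place of $\neg T$.
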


\begin{proof}
(1)\quad Let $Q$ be a binary relational symbol and let $\bbM_1$ be a model with
the universe $\omega_1$, the vocabulary $\tau(\bbM_1)=\{Q\}$ and such that 
$Q^{\bbM_1}=\{ (\alpha,\beta)\in \omega_1\times\omega_1:
\alpha<\beta\}$. Then for each $\alpha_0<\alpha_1<\omega_1$ we have
$\bbM_1\models Q[\alpha_0,\alpha_1]$ but the set $\{\alpha<\omega_1:
\bbM_1\models Q[\alpha,\alpha_1]\}$ is countable. Hence $\rk(w,\bbM_1)=-1$
whenever $|w|\geq 2$ and $\rk(\{\alpha\},\bbM_1)=0$ for $\alpha\in
\omega_1$. Consequently, $\bbM_1$ witnesses ${\rm   NPr}_1(\omega_1)$.  
\medskip

\noindent (2)\quad Assume ${\rm NPr}_\vare(\lambda)$ holds true as witnessed
by a model $\bbM$ with the universe $\lambda$ and a countable vocabulary
$\tau$. We may assume that $\tau=\{R_i:i<\omega\}$, where each $R_i$ is a
relational symbol of arity $n(i)$. Let $S$ be a new binary relational
symbol, $T$ be a new unary relational symbol, and $Q_i$ be a new
$(n(i)+1)$--ary relational symbol (for $i<\omega$). Let
$\tau^+=\{R_i,Q_i: i<\omega\}\cup \{S,T\}$.

For each $\gamma\in [\lambda,\lambda^+)$ fix a bijection $f_\gamma:\gamma
\stackrel{\rm 1-1}{\longrightarrow} \lambda$ with $f_\lambda$ being the
identity. We define a model $\bbM^+$:
\begin{itemize}
\item the vocabulary of $\bbM^+$ is $\tau^+$ and the universe of $\bbM^+$ is
  $\lambda^+$,  
\item $R_i^{\bbM^+}=R_i^\bbM\subseteq \lambda^{n(i)}$,
\item $Q_i^{\bbM^+}=\{\big(a_0,\ldots,a_{n(i)-1},a_{n(i)}\big):
  \lambda\leq a_{n(i)}<\lambda^+\ \&\ (\forall \ell<n(i))(a_\ell<
  a_{n(i)})\ \&\ \big(f_{a_{n(i)}}(a_0),\ldots,f_{a_{n(i)}}(
  a_{n(i)-1})\big)\in R_i^\bbM\}$,
\item $S^{\bbM^+}=\{(a_0,a_1)\in\lambda^+\times\lambda^+:
  a_0<a_1\}$ and $T^{\bbM^+}=[\lambda,\lambda^+)$. 
\end{itemize}

\begin{claim}
\label{cl1}
\begin{enumerate}
\item[(i)] If $\lambda\leq\gamma<\lambda^+$, $\emptyset\neq 
  w\subseteq \gamma$, then $\rk(w\cup\{\gamma\},\bbM^+)\leq 
  \rk(f_\gamma[w],\bbM)$ and thus $\rk(w\cup\{\gamma\},\bbM^+) <\vare$. 
\item[(ii)] If $\emptyset\neq w\subseteq\lambda$, then $\rk(w,\bbM^+)\leq  
  \rk(w,\bbM)$ and thus $\rk(w,\bbM^+) <\vare$.     
\item[(iii)] If $\lambda\leq\gamma<\lambda^+$, then $\rk(\{\gamma\},\bbM^+)  
  \leq\vare$. 
\end{enumerate}
\end{claim}

\begin{proof}[Proof of the Claim]
(i)\quad By induction on $\alpha$ we show that $\alpha\leq\rk(w\cup
\{\gamma\}, \bbM^+)$ implies $\alpha\leq \rk(f_\gamma[w],\bbM)$ (for all
sets $w\subseteq \gamma$ with fixed $\gamma\in
[\lambda,\lambda^+)$).
\smallskip

\noindent $(*)_0$\quad Assume $\rk(w\cup\{\gamma\},\bbM^+)\geq 0$,
$w=\{a_0,\ldots,a_n\}$ and $k\leq n$. Let $\varphi(x_0,
\ldots,x_n)$ be a quantifier free formula in the vocabulary $\tau$
such that 
\[\bbM\models \varphi[f_\gamma(a_0), \ldots,
f_\gamma(a_k),\ldots,f_\gamma(a_n)].\]
Let $\varphi^*(x_0,\ldots, x_n, x_{n+1})$ be a quantifier free formula
in the vocabulary $\tau^+$ obtained from $\varphi$ by replacing each
$R_i(y_0,\ldots, y_{n(i)-1})$ (where $\{y_0,\ldots, y_{n(i)-1}\}
\subseteq \{x_0,\ldots,x_n\}$) with
$Q_i(y_0,\ldots,y_{n(i)-1},x_{n+1})$ and let $\varphi^+$ be  
\[\varphi^*(x_0,\ldots,x_n,x_{n+1})\ \wedge\ S(x_0,x_{n+1})\
\wedge\ldots \wedge S(x_n,x_{n+1}).\] 
Then  $\bbM^+\models\varphi^+[a_0,\ldots,a_k,\ldots,a_n,
\gamma]$. By our assumption on $w\cup\{\gamma\}$ we know that the set 
$A=\{b<\lambda^+:\bbM^+\models \varphi^+[a_0,\ldots,a_{k-1},
b, a_{k+1},\ldots,a_n,\gamma]\}$ is uncountable. Clearly
$A\subseteq\gamma$ (note $S(x_k,x_{n+1})$ in $\varphi^+$) and thus the
set $f_\gamma[A]$ is an uncountable subset of $\lambda$. For each
$b\in A$ we have $\bbM\models \varphi[f_\gamma(a_0),\ldots,
f_\gamma(b), \ldots,f_\gamma(a_n)]$, so now we may conclude
that $\rk(f_\gamma[w],\bbM)\geq 0$. 
\smallskip

\noindent $(*)_1$\quad Assume $\rk(w\cup\{\gamma\},\bbM^+)\geq \alpha+1$.
Let $\varphi(x_0,\ldots,x_n)$ be a quantifier free formula in the
vocabulary $\tau$, $k\leq n$ and $w=\{a_0,\ldots,a_n\}$, and suppose
that $\bbM\models\varphi[f_\gamma(a_0),\ldots,f_\gamma(a_k),\ldots,
f_\gamma(a_n)]$. Let $\varphi^*$ and $\varphi^+$ be defined exactly as in
$(*)_0$. Then $\bbM^+\models \varphi^+[a_0,\ldots,a_k,\ldots,a_n,\gamma]$. 
By our assumption there is $a^*\in \lambda^+\setminus (w\cup \{\gamma\})$
such that  $\bbM^+\models \varphi^+[a_0,\ldots,a^*,\ldots,a_n,\gamma]$ and
$\rk(w\cup\{\gamma,a^*\}, \bbM^+)\geq \alpha$. Necessarily $a^*<\gamma$, and by
the inductive hypothesis $\rk(f_\gamma[w\cup\{a^*\}],\bbM)\geq \alpha$. Clearly
$\bbM\models\varphi[f_\gamma(a_0),\ldots,f_\gamma(a^*),\ldots, f_\gamma(a_n)]$
and we may conclude $\rk(f_\gamma[w],\bbM)\geq \alpha+1$.  
\smallskip

\noindent $(*)_2$\quad If $\alpha$ is limit and
$\rk(w\cup\{\gamma\},\bbM^+)\geq \alpha$ then, by the inductive
hypothesis, for each $\beta<\alpha$ we have $\beta\leq\rk(w
\cup\{\gamma\}, \bbM^+)\leq \rk(f_\gamma[w],\bbM)$. Hence $\alpha\leq
\rk(f_\gamma[w],\bbM)$. 
\medskip

\noindent (ii)\quad Induction similar to part (i). For a quantifier free
formula $\varphi(x_0,\ldots,x_n)$  in the vocabulary $\tau$, let $\varphi^*$
be the formula $\varphi(x_0,\ldots,x_n)\ \wedge\ \neg T(x_0)\ \wedge \ldots
\wedge \neg T(x_n)$ (so $\varphi^*$ is a quantifier free formula in the
vocabulary $\tau^+$).   If $\varphi$ witnesses that $\neg(\rk(w,\bbM)\geq 0)$,
then $\varphi^*$ witnesses $\neg(\rk(w,\bbM^+)\geq 0)$, and similarly with 
$\alpha+1$ in place of 0. 
\medskip

\noindent (iii)\quad Suppose towards contradiction that
$\vare+1\leq\rk(\{\gamma\}, \bbM^+) $. Since $\bbM^+\models T[\gamma]$, we may
find $\gamma'\neq \gamma$ such that $\rk(\{\gamma,\gamma'\},\bbM^+)\geq \vare$ and
$\bbM^+\models T[\gamma']$. Let $\{\gamma,\gamma'\}=\{\gamma_0,\gamma_1\}$
where $\gamma_0<\gamma_1$. It follows from part (i) that $\rk(\{\gamma_0,  
\gamma_1\}, \bbM^+)<\vare$, a contradiction. 
\end{proof}

It follows from Claim \ref{cl1} (and Observation \ref{monot}) that
$\rk(w,\bbM^+)\leq \vare$ for every non-empty set $w\subseteq
\lambda^+$. Consequently, the model $\bbM^+$ witnesses  ${\rm
    NPr}_{\vare+1}(\lambda^+)$. 
\medskip

\noindent (3) Let $\langle \mu_n:n<\omega\rangle$ be an increasing sequence
cofinal in $\lambda$. For each $n$ fix a model $\bbM_n$ with a countable
vocabulary $\tau(\bbM_n)$ consisting of relational symbols only and with the
universe $\mu_n$ and such that $\rk(w,\bbM_n)<\vare$ for nonempty finite
$w\subseteq \mu_n$.  We also assume that $\tau(\bbM_n)\cap \tau(\bbM_m)=\emptyset$
for $n<m<\omega$. Let $P_n$ (for $n<\omega$) be new unary relational symbols
and let $\tau^+=\bigcup \{\tau(\bbM_n):
n<\omega\}\cup\{P_n:n<\omega\}$.
Consider a model $\bbM^+$ in vocabulary $\tau^+$ with the universe $\lambda$
and such that
\begin{itemize}
\item $P_n^\bbM=\mu_n$ for $n<\omega$, and 
\item for each $n<\omega$ and $S\in \tau(\bbM_n)$ we have $S^\bbM=S^{\bbM_n}$.
\end{itemize}

\begin{claim}
\label{cl2}
If $w$ is a finite non-empty subset of $\mu_n$, $n<\omega$, then
$\rk(w,\bbM)\leq \rk(w,\bbM_n)<\vare$.
\end{claim}

\begin{proof}[Proof of the Claim]
Similar to the proofs in Claim \ref{cl1}.
\end{proof}

\noindent (4) Follows from Observation \ref{obsonrk}(1).
\end{proof}

\begin{proposition}
[See {\cite[Conclusion 1.8]{Sh:522}}]
\label{522.1.8}  
\begin{enumerate}
\item ${\rm Pr}^*_{\omega_1}(\beth_{\omega_1})$ holds and hence also ${\rm  
    Pr}_{\omega_1}(\beth_{\omega_1})$. 
\item Assume $\beta<\alpha<\omega_1$, $\bbM$ is a model with a countable
  vocabulary $\tau$ and the universe $\mu$,  $m,n<\omega$, $n>0$, $A\subseteq
  \mu$ and $|A|\geq \beth_{\omega\cdot\alpha}$. Then there is $w\subseteq A$
  with $|w|=n$ and $\rk^*(w,\bbM)\geq \omega\cdot \beta+m$ \footnote{``
    $\cdot$ '' stands for the ordinal multiplication}. 
\end{enumerate}
\end{proposition}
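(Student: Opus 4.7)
\emph{Overall strategy.} The plan is to derive (1) from (2) and to prove (2) by transfinite induction on $\alpha<\omega_1$.

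\emph{Part (1) from part (2).} Assume (2) and let $\bbM$ be any model with universe $\beth_{\omega_1}$ and countable vocabulary. For an arbitrary $\gamma<\omega_1$, apply part (2) with $\alpha=\gamma+1$, $\beta=\gamma$, $m=0$, $n=1$, and $A=\beth_{\omega_1}$; since $\beth_{\omega_1}\geq\beth_{\omega(\gamma+1)}$, this yields a singleton $w\subseteq\beth_{\omega_1}$ with $\rk^*(w,\bbM)\geq\omega\gamma\geq\gamma$. Hence $\sup\{\rk^*(w,\bbM):\emptyset\neq w\in[\beth_{\omega_1}]^{<\omega}\}\geq\omega_1$, which is exactly $\mathrm{Pr}^*_{\omega_1}(\beth_{\omega_1})$. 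The additional assertion $\mathrm{Pr}_{\omega_1}(\beth_{\omega_1})$ is the contrapositive of Proposition~\ref{cl1.7-522}(4).

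\emph{Induction on $\alpha$ for part (2).} The base case $\alpha=0$ is vacuous. For limit $\alpha$, given $\beta<\alpha$, choose $\alpha'$ with $\beta<\alpha'<\alpha$, note $|A|\geq \beth_{\omega\alpha}\geq \beth_{\omega\alpha'}$, and invoke the induction hypothesis at $\alpha'$. The substantive case is $\alpha=\alpha'+1$, and only the subcase $\beta=\alpha'$ is new: for $\beta<\alpha'$ the induction hypothesis applies directly. For $\beta=\alpha'$ I would do a second induction on $m$, aiming to produce $w\in[A]^n$ with $\rk^*(w,\bbM)\geq\omega\alpha'+m$ under the hypothesis $|A|\geq\beth_{\omega\alpha'+\omega}$. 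The tool is the Erd\H{o}s--Rado partition theorem applied at the appropriate $\beth$-level: since $\tau(\bbM)$ is countable there are only countably many quantifier-free formulas in any bounded number of variables, so tuples from $A$ may be colored by their quantifier-free type with $\aleph_0$ colors. Using $|A|\geq\beth_{\omega\alpha'+\omega}$ one extracts (by iterated Erd\H{o}s--Rado) a subset $B\subseteq A$, still of cardinality at least $\beth_{\omega\alpha'}$, in which all increasing $\ell$-tuples (for $\ell$ up to a threshold determined by $n$ and $m$) realize one common quantifier-free type. Inside $B$, pick $w=\{a_0<\cdots<a_{n-1}\}$ so that $\aleph_1$ elements of $B$ lie strictly between consecutive $a_k$'s and at both ends (easily arranged by an order-type argument once $|B|\geq\aleph_1$ is ensured). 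Then for clause (c)$^*$ at $\omega\alpha'+m$, given any quantifier-free $\varphi$ with $\bbM\models\varphi[a_0,\ldots,a_{n-1}]$ and any position $k$, take the $\aleph_1$-family of replacements $a^*_i$ from the elements of $B$ strictly between $a_{k-1}$ and $a_{k+1}$ (setting $a^*_0=a_k$): indiscernibility of $B$ guarantees $\bbM\models\varphi[\ldots,a^*_i,\ldots]$, while the sub-inductive hypothesis on $m$ (or the main induction at $\alpha'$ when $m=0$), applied to the still-large sub-indiscernible $B_{ij}\subseteq B$ fitting around each pair $(a^*_i,a^*_j)$, supplies the rank $\geq\omega\alpha'+m-1$ of the extended set.

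\emph{Main obstacle.} The essential difficulty is clause (c)$^*$: it demands an \emph{uncountable family} of pair-wise compatible replacements, not merely one extension at a time. This is what forces the bound on $|A|$ to be a $\beth$-cardinal rather than an $\aleph$-cardinal, and the factor $\omega$ in $\beth_{\omega\alpha}$ is precisely what is consumed by the sub-induction on $m$ (each step of which costs one Erd\H{o}s--Rado application, i.e.\ one $\beth$-jump). The delicate part of the proof is the bookkeeping: ensuring that after each Erd\H{o}s--Rado extraction the remaining indiscernible set is still large enough to feed the next inductive step and simultaneously supply the $\aleph_1$-many coordinated pairs required by clause (c)$^*$.
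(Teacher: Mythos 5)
Your overall strategy matches the paper's: derive (1) from (2), prove (2) by induction on $\alpha$, and handle the successor step by extracting an Erd\H{o}s--Rado--homogeneous set and running an inner induction that raises the rank one unit at a time. However, there is a genuine gap in the way you set up the homogeneity, and a related bookkeeping problem in the inner induction.

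The gap: you propose to colour tuples only by their quantifier-free type. That is not enough. Clause (c)$^*$ of Definition~\ref{defofrank} demands that for each pair $i<j$ the \emph{specific} set $(w\setminus\{a_k\})\cup\{a^*_i,a^*_j\}$ have rank $\geq\omega\alpha'+m-1$. Your plan is to get that bound by invoking the inductive hypothesis on a ``sub-indiscernible $B_{ij}$ fitting around'' the pair $(a^*_i,a^*_j)$ --- but the inductive hypothesis only produces \emph{some} $n{+}1$-tuple of high rank inside $B_{ij}$, not the particular tuple you need. To close this gap you must also colour tuples by the truncated value $\min\{\omega\cdot\alpha,\,\rk^*(\cdot,\bbM)\}$ for each arity $\leq m{+}n{+}1$; this is a coloring with countably many colours, so the same Erd\H{o}s--Rado application handles it. Once the homogeneous set is also \emph{rank-homogeneous}, showing that one tuple of a given size has rank $\geq\omega\gamma+k$ shows that they all do, and the inner induction goes through. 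This rank homogeneity is the content of conditions (b) and (d) in the paper's proof and cannot be skipped.

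A secondary point: the inner induction is not a clean ``second induction on $m$ aiming at sets of size $n$.'' Each application of (c)$^*$ enlarges the tuple by one element while dropping the rank target by one, so the correct invariant is of the form ``for every $w$ of size $\leq m{+}n{+}1{-}k$, $\rk^*(w,\bbM)\geq\omega\gamma+k$'' and it is proved by induction on $k$ up to $m{+}1$. Phrased that way, the base case $(**)_0$ (all $(m{+}n{+}1)$-tuples have rank $\geq\omega\gamma$) follows from the main induction hypothesis at levels $<\alpha$ combined with rank homogeneity, and $(**)_{m+1}$ gives the conclusion for $n$-element sets. Your sketch makes it sound as if $n$ stays fixed throughout, which would make the replacement step underpowered.

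Your derivation of (1) from (2), and the reduction of the limit and the $\beta<\alpha'$ subcases to the inductive hypothesis, are correct.
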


\begin{proof}
(1) Follows from part (2) (and \ref{cl1.7-522}(4)). 
\medskip

\noindent (2) Induction on $\alpha<\omega_1$.
\smallskip

\noindent {\sc Step $\alpha=1$ (and $\beta=0$):}\quad Let
$\bbM,\mu,n,m$ be as in the assumptions, $A\subseteq \mu$ and $|A|\geq
\beth_\omega$. Using the Erd\H{o}s--Rado theorem we may choose a
sequence $\langle a_i:i<\omega_2\rangle$ of distinct elements of $A$ such
that: 
\begin{enumerate}
\item[(a)] the quantifier free type of $\langle a_{i_0},\ldots,a_{i_{m+n}}
  \rangle$ in $\bbM$ is constant for $i_0<\ldots< i_{m+n}<\omega_2$, and  
\item[(b)] for each $k\leq m+n$ the value of $\min\{\omega,
  \rk^*(\{a_{i_0},\ldots, a_{i_{n+m-k}}\}, \bbM)\}$ is constant for
  $i_0<\ldots< i_{m+n-k}<\omega_2$. 
\end{enumerate}
Let $i_\ell=\omega_1\cdot (\ell+1)$ (for $\ell=-1,0,\ldots, n+m$).  Suppose
$\phi(x_0,\ldots,x_{n+m})\in\cL(\tau)$ is a quantifier free formula,
$k\leq n+m$ and $\bbM\models \phi[a_{i_0},\ldots,a_{i_k}, \ldots,
a_{i_{n+m}}]$. It follows from the property stated in (a) above that for
every $i$ in the (uncountable) interval $(i_{k-1},i_k)$ we have $\bbM
\models\varphi[a_{i_0},\ldots,a_{i_{k-1}},a_i, a_{i_{k+1}}, \ldots, a_{i_{m+n}}]$.
Consequently, $\rk^*\big(\{a_{i_0}, \ldots, a_{i_{n+m}}\}, \bbM\big)\geq 0$,
and the homogeneity stated in (b) implies that for every nonempty set
$w\subseteq \omega_2$ with at most $n+m+1$ elements we have 
$\rk^*(\{a_i:i\in w\},\bbM)\geq 0$. Now, by induction on $k\leq m+n$ we will 
argue that
\begin{enumerate}
\item[$(*)_k$] for every nonempty set $w\subseteq\omega_2$ with at most
  $n+m+1-k$ elements we have $\rk^*(\{a_i:i\in w\},\bbM)\geq k$.
\end{enumerate}
We have already justified $(*)_0$. For the inductive step assume $(*)_k$ and
$k< m+n$. Let $i_\ell=\omega_1\cdot (\ell+1)$ and suppose that
$\varphi(x_0,\ldots,x_{m+n-k-1})$ is a quantifier free formula,
$\bbM\models\varphi[a_{i_0},\ldots,a_{i_z}, \ldots, a_{i_{m+n-k-1}}]$ and
$0\leq z\leq n+m-k-1$.  By the homogeneity stated in (a), for every $i$ in
the  uncountable interval $(i_{z-1},i_z)$ we have $\bbM\models
\varphi[a_{i_0},\ldots,a_{i_{z-1}},a_i,a_{i_{z+1}},\ldots, 
a_{i_{m+n-k-1}}]$. The inductive hypothesis $(*)_k$ implies that
$\rk^*(\{a_{i_0},\ldots,a_{i_{z-1}},a_i,a_j,a_{i_{z+1}}, \ldots
a_{i_{m+n-k-1}}\}, \bbM)\geq k$ (for any $i_{z-1}<i<j\leq i_z$). Now we
easily conclude that $k+1\leq \rk^*(\{a_{i_0},\ldots, a_{i_{m+n-k-1}}\},
\bbM)$ and  $(*)_{k+1}$ follows by the homogeneity given by (b).  

Finally note that $(*)_{m+1}$ gives the desired conclusion: taking any
$i_0<\ldots< i_{n-1}<\omega_2$ we will have
$m+1\leq \rk^*\big(\{a_{i_0},\ldots,a_{i_{n-1}}\},\bbM\big)$. 
\medskip

\noindent {\sc Step $\alpha=\gamma+1$:}\quad Let $\bbM,\mu,n,m$ be as in the
assumptions, $A\subseteq \mu$ and $|A|\geq \beth_{\omega\cdot\gamma
  +\omega}$. By the Erd\H{o}s--Rado theorem we may choose a
sequence $\langle a_i:i<\beth_{\omega\cdot\gamma}\rangle$ of distinct
elements of $A$ such that the following two demands are satisfied.  
\begin{enumerate}
\item[(c)] The quantifier free type of $\langle a_{i_0},\ldots,a_{i_{m+n}}
  \rangle$ in $\bbM$ is constant for $i_0<\ldots< i_{m+n}<
  \beth_{\omega\cdot\gamma}$.   
\item[(d)] For each $k\leq m+n$ the value of $\min\{\omega\cdot \alpha,
  \rk^*(\{a_{i_0},\ldots, a_{i_{n+m-k}}\}, \bbM)\}$ is constant for $i_0<\ldots<
  i_{m+n-k}<\beth_{\omega\cdot\gamma}$. 
\end{enumerate}
For any $\ell<\omega$ and $\gamma'<\gamma$, we may apply the inductive
hypothesis to $\{a_i:i<\beth_{\omega\cdot\gamma}\}$, $\ell$, $m+n+1$ and 
$\gamma'$ to find $i_0<\ldots < i_{m+n}<\beth_{\omega\cdot\gamma}$  
such that $\rk^*(\{a_{i_0},\ldots,a_{i_{m+n}}\},\bbM)\geq \omega\cdot
\gamma'+\ell$. By the homogeneity in (d) this implies that 
\begin{enumerate}
\item[$(**)_0$] for all $i_0<\ldots < i_{m+n}<\beth_{\omega\cdot\gamma}$ we
  have  $\rk^*(\{a_{i_0}, \ldots,a_{i_{m+n}}\},\bbM)\geq \omega\cdot
  \gamma$.  
\end{enumerate}
Now, by induction on $k\leq m+n$ we argue that  
\begin{enumerate}
\item[$(**)_k$] for each $i_0<\ldots<i_{m+n-k}<(\beth_{\omega\cdot\gamma})^+$
  we have    
\[\omega\cdot\gamma+k\leq \rk^*(\{a_{i_0},\ldots,a_{i_{m+n-k}}\},\bbM).\]
\end{enumerate}
So assume $(**)_k$, $k<m+n$ and let $i_\ell=\omega_1\cdot (\ell+1)$ (for
$\ell=-1,0, \ldots, m+n$) and $0\leq z\leq n+m-k-1$. Suppose that
$\bbM\models \varphi[a_{i_0},\ldots,a_{i_z}, \ldots, a_{i_{m+n-k-1}}]$.
Then by the homogeneity in (c), for every $i$ in the uncountable interval 
$(i_{z-1},i_z)$ we have $\bbM\models
\varphi[a_{i_0},\ldots,a_{i_{z-1}},a_i,a_{i_{z+1}},\ldots, 
a_{i_{m+n-k-1}}]$. By  the inductive hypothesis $(**)_k$ we know
$\omega\cdot\gamma+k\leq \rk^*(\{a_{i_0},\ldots,a_{i_{z-1}},a_i, a_j,
a_{i_{z+1}},\ldots a_{i_{m+n-k-1}}\},\bbM)$ (for $i_{z-1}<i<j\leq
i_z$). Now we easily conclude that $\omega\cdot\gamma+k+1\leq 
\rk^*(\{a_{i_0},\ldots a_{i_{m+n-k-1}}\},\bbM)$, and $(**)_{k+1}$ follows by
the homogeneity in (d).     

Finally note that $(**)_{m+1}$ gives the desired conclusion: taking any
$i_0<\ldots< i_{n-1}<\beth_{\omega_\gamma}$ we will have
$\rk^*\big(\{a_{i_0},\ldots,a_{i_{n-1}}\},\bbM\big)\geq \omega\cdot
\gamma+m+1$.
\medskip

\noindent {\sc Step $\alpha$ is limit:}\quad should be clear.  
\end{proof}

\begin{definition}
\label{deflam}
Let $\lambda_{\omega_1}$ be the smallest cardinal $\kappa$ such that ${\rm 
  Pr}_{\omega_1}(\kappa)$ and $\lambda^*_{\omega_1}$ be the smallest cardinal 
$\kappa$ such that ${\rm Pr}^*_{\omega_1}(\kappa)$. 
\end{definition}

By Propositions \ref{cl1.7-522}(4) and \ref{522.1.8} we have
$\lambda_{\omega_1}\leq \lambda^*_{\omega_1}\leq \beth_{\omega_1}$.  

\begin{proposition}
[See {\cite[Claim 1.10(1)]{Sh:522}}]
\label{522.1.10}  
If $\bbP$ is a ccc forcing notion and $\lambda$ is a cardinal such that
${\rm Pr}^*_{\omega_1}(\lambda)$ holds, then $\forces_{\bbP}$`` ${\rm
  Pr}^*_{\omega_1}(\lambda)$ and hence also ${\rm Pr}_{\omega_1}(\lambda)$ 
''.  
\end{proposition}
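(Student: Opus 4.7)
I would proceed by contradiction: assume $\bbP$ is ccc, ${\rm Pr}^*_{\omega_1}(\lambda)$ holds in $V$, yet some $p_0\in \bbP$ forces ${\rm NPr}^*_{\omega_1}(\lambda)$ via a $\bbP$-name $\name{\bbM}$ for a model on $\lambda$ with a fixed countable vocabulary $\tau=\{R_i:i<\omega\}\in V$. The plan is to construct in $V$ a model $\cN$ on $\lambda$ with countable vocabulary and $\sup_w \rk^*(w,\cN)<\omega_1$, contradicting ${\rm Pr}^*_{\omega_1}(\lambda)$ in $V$. The second conclusion ${\rm Pr}_{\omega_1}(\lambda)$ in $V[G]$ is then immediate from Proposition \ref{cl1.7-522}(4) applied inside $V[G]$.

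I would take $\cN$ with universe $\lambda$ and vocabulary $\tau^+=\{S_\varphi:\varphi$ a qf $\tau$-formula$\}$ (countable, since $\tau$ is), interpreted by
\[S_\varphi^{\cN}(\bar{a})\iff (\exists q\geq p_0)(q\forces_\bbP \name{\bbM}\models \varphi[\bar{a}]).\]
The technical heart is the claim, to be proved by induction on $\alpha$:
\[(\ast)\quad \rk^*(w,\cN)\geq \alpha \text{ in }V \ \Longrightarrow\ (\exists p\geq p_0)(p\forces \rk^*(w,\name{\bbM})\geq \alpha).\]
The induction relies on the standard ccc fact that whenever $\name{X}$ is a $\bbP$-name forced by some $q\geq p_0$ to be an uncountable subset of $\lambda$, the $V$-set $\{a\in\lambda:(\exists r\geq p_0)(r\forces a\in \name{X})\}$ is already uncountable in $V$. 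The successor step additionally uses a ccc pigeonhole: among the $\omega_1$ candidate conditions that witness $\cN\models S_\varphi[\ldots,a^*_i,\ldots]$ for the $\omega_1$-many $a^*_i$ produced by $\rk^*(w,\cN)\geq \alpha+1$, uncountably many must lie below a single member of a maximal antichain below $p_0$, and that member serves as the common witnessing condition.

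The main obstacle is the Boolean-closure mismatch between qf $\tau^+$-formulas (Boolean combinations of the $S_\varphi$'s) and the qf $\tau$-formulas whose interpretations matter in the generic extension: for instance $\cN\models \neg S_\varphi[\bar{a}]$ encodes $p_0\forces \neg\varphi[\bar{a}]$, strictly stronger than $S_{\neg\varphi}^\cN[\bar{a}]$. I would handle each qf $\tau^+$-formula via its disjunctive normal form, treating negated atomic literals as forcing statements about $p_0$-decisions and performing the rank transfer one conjunctive clause at a time, relaxing $p_0$ to a compatible extension whenever the clause demands a decision rather than mere forceability.

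To conclude, apply ${\rm Pr}^*_{\omega_1}(\lambda)$ in $V$ to $\cN$: some $w$ has $\rk^*(w,\cN)\geq \omega_1$, so by $(\ast)$ for every $\alpha<\omega_1$ there is $p_\alpha\geq p_0$ with $p_\alpha\forces \rk^*(w,\name{\bbM})\geq \alpha$. In the Boolean completion $\bbB$ of $\bbP$, the decreasing sequence $b_\alpha=p_0\cdot \|\rk^*(w,\name{\bbM})\geq \alpha\|$ is nonzero for every $\alpha<\omega_1$; ccc forbids a strictly decreasing $\omega_1$-chain (the successive differences $b_\alpha\setminus b_{\alpha+1}$ would form an uncountable antichain), so this sequence stabilizes at some $\alpha^*<\omega_1$, producing $p\geq p_0$ with $p\forces \rk^*(w,\name{\bbM})\geq \omega_1$. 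This contradicts $p_0\forces {\rm NPr}^*_{\omega_1}(\lambda)$.
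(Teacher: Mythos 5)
Your overall architecture --- build an auxiliary model in $V$ out of forcing facts about $\name{\bbM}$ and derive a contradiction --- is the same as the paper's, but your auxiliary model $\cN$ is too coarse and the central transfer claim $(\ast)$ is in fact false, so there is a genuine gap. The paper's model $\bbM^*$ uses predicates $S_{n,\zeta,\beta,k}$ which record not merely that some $q\geq p$ forces $R_{n,\zeta}[\bar a]$, but the \emph{exact forced value} $\beta$ of $\rk^*(\bar a, \name{\bbM})$ together with the formula and coordinate that witness it; the key Claim~\ref{cl4} then yields the \emph{upper} bound $\rk^*(\bar a, \bbM^*)\leq\beta$, and combined with Claim~\ref{cl3} this gives $\sup_w\rk^*(w,\bbM^*)<\alpha_0$, so $\bbM^*$ witnesses ${\rm NPr}^*_{\omega_1}(\lambda)$ in $V$. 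Your $S_\varphi$'s carry none of that rank information, and $(\ast)$ asserts a \emph{lower-bound transfer} in the opposite direction --- that a high $\cN$-rank can be pushed down to a single condition forcing a high $\name{\bbM}$-rank. That direction does not hold.

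Indeed $(\ast)$ already fails at $\alpha=0$. Take $\bbP$ Cohen with generic $c\in{}^\omega 2$, $\lambda=\omega_1$, $\tau=\{R_1,R_2\}$ unary, $p_0=\emptyset$, and let $R_1^{\name{\bbM}}=\omega,\ R_2^{\name{\bbM}}=\omega_1\setminus\omega$ when $c(0)=0$, and $R_1^{\name{\bbM}}=\omega_1\setminus\omega,\ R_2^{\name{\bbM}}=\omega$ when $c(0)=1$. For $a_0\in\omega$ one has $p_0\forces\rk^*(\{a_0\},\name{\bbM})=-1$: if $c(0)=0$ the pair $(R_1,0)$ witnesses this (the set $\{a:R_1(a)\}=\omega$ is countable), and if $c(0)=1$ the pair $(R_2,0)$ does. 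On the other hand, for every one-variable quantifier-free $\tau$-formula $\varphi$, the set $\{a:\cN\models S_\varphi[a]\}$ is either $\emptyset$ or all of $\omega_1$: the two realized qf $\tau$-types $\{R_1,\neg R_2\}$ and $\{\neg R_1,R_2\}$ are each attainable, for \emph{every} $a$, under some extension of $p_0$, so $S_\varphi$ cannot distinguish elements. Hence $\cN$ is fully homogeneous, $\rk^*(\{a_0\},\cN)=\infty$, yet no $p\geq p_0$ forces $\rk^*(\{a_0\},\name{\bbM})\geq 0$. The obstruction is exactly that, for a given $\varphi$, the conditions witnessing $S_\varphi(a)$ for different $a$ lie on mutually incompatible branches of $\bbP$, and the possibility predicate cannot record which branch; the $\beta$- and $k$-slots in the paper's $S_{n,\zeta,\beta,k}$ are precisely what lets the ccc argument in the proof of Claim~\ref{cl4} force the incompatible pieces into a single generic and obtain a contradiction. (Note also that the ccc fact you invoke is stated in the direction "forced uncountable implies $V$-set of possible members uncountable," which is the wrong direction for $(\ast)$ and in fact holds without ccc.) To salvage the approach you would have to enrich the vocabulary to encode the forced rank value and witness, which essentially reproduces the paper's $\bbM^*$, or else switch to proving an upper bound on the auxiliary model's rank as the paper does rather than attempting a lower-bound transfer.
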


\begin{proof}
Suppose towards contradiction that for some $p\in \bbP$ we have
$p\forces_{\bbP} {\rm NPr}^*_{\omega_1}(\lambda)$. Let
$\tau=\{R_{n,\zeta}:n,\zeta<\omega\}$ where $R_{n,\zeta}$ is an $n$--ary
relation symbol (for $n,\zeta<\omega$).   Then we may pick a name $\name{\bbM}$
for a model on $\lambda$ in vocabulary $\tau$ and an ordinal
$\alpha_0<\omega_1$ such that 
\[\begin{array}{ll}
p\forces&\mbox{``}\name{\bbM}=(\lambda,\{R^{\name{\bbM}}_{n,\zeta}\}_{n,\zeta
          <\omega}) \mbox{ is a model such that}\\
&\mbox{(a)\quad for every $n$ and a quantifier free formula }
  \varphi(x_0,\ldots,x_{n-1})\in \cL(\tau)\\
&\qquad\mbox{ there is $\zeta<\omega$ such that for all }a_0,\ldots, a_{n-1}
    \\ 
&\qquad \ \name{\bbM}\models\varphi[a_0,\ldots,a_{n-1}]\Leftrightarrow
  R_{n,\zeta}[a_0,\ldots, a_{n-1}]\\
&\mbox{(b) }\quad \sup\{\rk(w,\name{\bbM}): \emptyset\neq w\in
  [\lambda]^{<\omega} \}<\alpha_0\mbox{ ''}.
\end{array}\]
Now, let $S_{n,\zeta,\beta,k}$ be an $n$--ary predicate (for
$k<n,\zeta<\omega$ and $-1\leq\beta<\alpha_0$) and let $\tau^*= 
\{S_{n,\zeta,\beta,k}:k<n<\omega,\ \zeta<\omega\mbox{ and
}-1\leq \beta<\alpha_0\}$. (So $\tau^*$ is a countable vocabulary.) We
define a model $\bbM^*$ in the vocabulary $\tau^*$. The universe of $
\bbM^*$ is $\lambda$ and for $k<n$, $\zeta<\omega$ and $-1\leq
\beta<\alpha_0$:  
\[\begin{array}{ll}
S_{n,\zeta,\beta,k}^{\bbM^*}=&\big\{ (a_0,\ldots,a_{n-1})\in {}^n\lambda:
a_0<\ldots<a_{n-1}\mbox{ and}\\
&\qquad\mbox{ some condition $q\geq p$ forces that } \\ 
&\quad \mbox{``} \name{\bbM}\models R_{n,\zeta}[a_0,\ldots,a_{n-1}] \mbox{ 
  and } \rk^*(\{a_0,\ldots, a_{n-1}\}, \name{\bbM})=\beta\mbox{ and }\\
&\quad\ R_{n,\zeta}, k\mbox{ witness that }  \neg \big(\rk^*(\{a_0,\ldots,
  a_{n-1}\},  \name{\bbM})\geq \beta+1 \big)\mbox{ ''} \big\}.
\end{array}\]

\begin{claim}
  \label{cl3}
For every $n$ and every increasing tuple $(a_0,\ldots,a_{n-1})\in
{}^n\lambda$ there are $\zeta<\omega$ and $-1\leq \beta<\alpha_0$ and $k<n$
such that $\bbM^*\models S_{n,\zeta,\beta,k}[a_0,\ldots,a_{n-1}]$.
\end{claim}

\begin{proof}[Proof of the Claim]
  Should be clear.
\end{proof}

\begin{claim}
  \label{cl4}
If $(a_0,\ldots,a_{n-1})\in {}^n\lambda$ and $\bbM^*\models S_{n,
  \zeta,\beta,k}[a_0,\ldots,a_{n-1}]$, then
\[\rk^*\big(\{a_0,\ldots,a_{n-1}\}, \bbM^*\big)\leq \beta.\] 
\end{claim}

\begin{proof}[Proof of the Claim]
First let us deal with the case of $\beta=-1$. Assume towards contradiction
that $\bbM^*\models S_{n, \zeta,-1,k}[a_0, \ldots,a_{n-1}]$,  but
$\rk^*\big(\{a_0,\ldots,a_{n-1}\}, \bbM^*\big)\geq 0$. Then we may find
distinct $\langle b_i:i<\omega_1\rangle \subseteq \lambda\setminus
\{a_0,\ldots,a_{n-1}\}$ such that  
\begin{enumerate}
\item[$(\otimes)_1$] $\bbM^*\models S_{n,\zeta,-1,k} [a_0,\ldots, a_{k-1},
  b_i, a_{k+1}, \ldots, a_{n-1}]$ for all $i<\omega_1$.
\end{enumerate}
For $i<\omega_1$ let $p_i\in\bbP$ be such that $p_i\geq p$ and 
\[\begin{array}{l}
p_i\forces\mbox{`` }\name{\bbM}\models R_{n,\zeta}[a_0,\ldots,b_i,\ldots,
            a_{n-1}] \mbox{ and } \rk^*(\{a_0,\ldots, b_i,\ldots, a_{n-1}\},
            \name{\bbM})=-1\mbox{ and }\\  
\qquad\quad R_{n,\zeta}, k\mbox{ witness that }  \neg
    \big(\rk^*(\{a_0,\ldots, a_{k-1},  b_i, a_{k+1}, \ldots, a_{n-1}\},
    \name{\bbM})\geq 0 \big)\mbox{ ''}     
  \end{array}\]
Let $\name{Y}$ be a name $\bbP$--name such that $p\forces
\name{Y}=\{i<\omega_1: p_i\in\name{G}_\bbP\}$. Since $\bbP$ satisfies ccc,
we may pick $p^*\geq p$ such that  $p^*\forces \mbox{``}\name{Y}$ is
  uncountable ''. Then 
\[p^*\forces\big(\forall i\in\name{Y}\big)\big( \name{\bbM}\models
             R_{n,\zeta}[a_0,\ldots,a_{k-1}, b_i, a_{k+1},\ldots,
             a_{n-1}]\big),\] 
so also 
\[p^*\forces \big\{b<\lambda:\name{\bbM}\models 
R_{n,\zeta}[a_0,\ldots,a_{k-1}, b, a_{k+1},\ldots,
             a_{n-1}]\big\}\mbox{ is uncountable.}\]
But 
\[p^*\forces\big(\forall i\in\name{Y}\big)\big( R_{n,\zeta}, k\mbox{ witness 
}  \neg \big(\rk^*(\{a_0,\ldots,a_{k-1}, b_i, a_{k+1},\ldots, a_{n-1}\},
\name{\bbM})\geq 0 \big)\big),\]
and hence 
\[p^*\forces \big\{b<\lambda:\name{\bbM}\models 
R_{n,\zeta}[a_0,\ldots,a_{k-1}, b, a_{k+1},\ldots,
             a_{n-1}]\big\}\mbox{ is countable },\]
a contradiction. 

Next we continue the proof of the Claim by induction on $\beta<\alpha_0$, so
we assume that $0\leq\beta$ and for $\beta'<\beta$ our claim holds true (for
any $n,\zeta,k$). Assume towards contradiction that $\bbM^*\models S_{n,
  \zeta,\beta,k}[a_0,\ldots,a_{n-1}]$,  but
$\rk^*\big(\{a_0,\ldots,a_{n-1}\}, \bbM^*\big)\geq\beta+1$.  
Then we may find distinct $\langle b_i:i<\omega_1\rangle \subseteq
\lambda\setminus (w\setminus\{a_k\})$ such that 
\begin{enumerate}
\item[$(\oplus)_1$] $\bbM^*\models S_{n,\zeta,\beta,k} [a_0,\ldots, a_{k-1},
  b_i, a_{k+1}, \ldots, a_{n-1}]$ for all $i<\omega_1$, $b_0=a_k$ and 
\item[$(\oplus)_2$] $\rk^*(\{a_0,\ldots, a_{k-1}, b_i,b_j, a_{k+1}, \ldots,
  a_{n-1}\}, \bbM^*)\geq \beta$ for all $i<j<\omega_1$. 
\end{enumerate}
For $i<\omega_1$ let $p_i\in\bbP$ be such that $p_i\geq p$ and 
\[\begin{array}{l}
p_i\forces\mbox{`` }\name{\bbM}\models R_{n,\zeta}[a_0,\ldots,b_i,\ldots,
            a_{n-1}] \mbox{ and } \rk^*(\{a_0,\ldots, b_i,\ldots, a_{n-1}\},
            \name{\bbM})=\beta\mbox{ and }\\  
\ \ R_{n,\zeta}, k\mbox{ witness that }  \neg \big(\rk^*(\{a_0,\ldots,
a_{k-1},  b_i, a_{k+1}, \ldots, a_{n-1}\},  \name{\bbM})\geq \beta+1
  \big)\mbox{ ''}     
  \end{array}\]
Take $p^*\geq p$ such that  
\[p^*\forces \mbox{``}\name{Y}\stackrel{\rm def}{=} \{i<\omega_1:
p_i\in\name{G}_\bbP\}\mbox{ is uncountable ''.}\]   
Since
\[\begin{array}{ll}
p^*\forces &\big(\forall i\in\name{Y}\big)\Big(\ \name{\bbM}\models R_{n,\zeta}[
  a_0,\ldots,a_{k-1}, b_i, a_{k+1},\ldots, a_{n-1}]\ \wedge\ \\
&\qquad\quad R_{n,\zeta}, k\mbox{ witness that }  \neg \big(\rk^*(\{a_0,\ldots,
  b_i,\ldots, a_{n-1}\},  \name{\bbM})\geq \beta+1 \big)\Big),
  \end{array}\]
we see that 
\[p^*\not\forces \big(\forall i,j\in\name{Y}\big)\big( i\neq j \Rightarrow\ 
\rk^*(\{a_0,\ldots,a_{k-1},b_i,b_j,a_{k+1},\ldots, a_{n-1}\},\name{\bbM})
\geq\beta\big).\] 
Consequently we may pick $q\geq p^*$, $i_0,j_0<\omega_1$ and $\gamma<\beta$ 
and $\xi<\omega$ and $\ell\leq n$ such that $b_{i_0}<b_{j_0}$ and  
\[\begin{array}{ll}
q\forces&\mbox{``}  p_{i_0},p_{j_0}\in \name{G}_\bbP\mbox{ and }
  \rk^*(\{a_0,\ldots, a_{k-1}, b_{i_0},b_{j_0}, a_{k+1},\ldots, a_{n-1}\},
          \name{\bbM})=\gamma\mbox{ and }\\ 
&\quad\ R_{n+1,\xi}\mbox{ and }\ell \mbox{ witness that }\\
&\quad \neg \big(\rk^*(\{a_0,\ldots, a_{k-1}, b_{i_0},b_{j_0},
  a_{k+1},\ldots, a_{n-1}\},  \name{\bbM})\geq \gamma+1 \big)\mbox{ ''.}       
  \end{array}\]
Then $\bbM^*\models S_{n+1,\xi,\ell,\gamma}[a_0,\ldots, a_{k-1}, b_{i_0},
b_{j_0}, a_{k+1},\ldots, a_{n-1}]$ and by the inductive hypothesis 
$\rk^*(\{a_0,\ldots, a_{k-1}, b_{i_0},b_{j_0}, a_{k+1},\ldots, a_{n-1}\},
\name{\bbM})\leq \gamma$, contradicting clause $(\oplus)_2$ above.     
\end{proof}
\end{proof}

\begin{corollary}
 \label{lamCoh}
Let $\mu=\beth_{\omega_1}\leq\kappa$ and $\bbC_\kappa$ be the forcing notion
adding $\kappa$ Cohen reals. Then
$\forces_{\bbC_\kappa}\lambda_{\omega_1}\leq\mu\leq\con$.  
\end{corollary}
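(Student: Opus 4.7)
The plan is to obtain this corollary as an immediate combination of Propositions \ref{522.1.8} and \ref{522.1.10}, with no new ideas required. First I would apply \ref{522.1.8}(1) in the ground model $V$ to conclude that $\Pr^*_{\omega_1}(\mu)$ holds, since by hypothesis $\mu=\beth_{\omega_1}$. Because $\bbC_\kappa$ satisfies ccc, \ref{522.1.10} then applies and yields $\forces_{\bbC_\kappa}\Pr^*_{\omega_1}(\mu)$, whence also $\forces_{\bbC_\kappa}\Pr_{\omega_1}(\mu)$ (the second clause of the conclusion of \ref{522.1.10}). Since ccc forcings preserve cardinals, $\mu$ remains a cardinal in $V^{\bbC_\kappa}$, and then Definition \ref{deflam} gives $\forces_{\bbC_\kappa}\lambda_{\omega_1}\leq\mu$ directly.

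For the second inequality $\forces_{\bbC_\kappa}\mu\leq\con$, I would argue that the generic for $\bbC_\kappa$ produces $\kappa$ many pairwise distinct Cohen reals in $\can$ (read off from the $\kappa$ coordinates of the generic function), so in the extension $\con\geq\kappa$. Combined with the hypothesis $\mu\leq\kappa$, this gives $\forces_{\bbC_\kappa}\mu\leq\con$.

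I do not anticipate any real obstacle: all the substantive work, namely the ccc preservation of $\Pr^*_{\omega_1}$, is already contained in Proposition \ref{522.1.10}, and the ground-model bound $\Pr^*_{\omega_1}(\beth_{\omega_1})$ is supplied by \ref{522.1.8}(1). The only minor point to verify is that $\mu$, defined as $\beth_{\omega_1}$ in $V$, need not equal $\beth_{\omega_1}$ in the extension, but this is irrelevant: the statement only asserts $\lambda_{\omega_1}^{V[G]}\leq\mu^V$, and that follows from the preservation of $\Pr_{\omega_1}$ applied to the specific cardinal $\mu$.
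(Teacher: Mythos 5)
Your proof is correct and follows exactly the intended route: the paper states the corollary without proof precisely because it is an immediate consequence of Proposition \ref{522.1.8}(1) and Proposition \ref{522.1.10}, together with the trivial observation that $\bbC_\kappa$ forces $\con\geq\kappa\geq\mu$. Your remark that $\mu$ is to be read as the ground-model $\beth_{\omega_1}$ (not the one recomputed in the extension) is exactly the right care to take.
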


\section{Spectrum of translation non-disjointness}

\begin{definition}
\label{otps}
Let $B\subseteq \can$ and $1\leq k\leq \con$. 
\begin{enumerate}
\item We say that $B$ is {\em perfectly orthogonal to $k$--small\/} (or a
$k$--{\bf pots}--set) if there is a perfect set $P\subseteq \can$ such that 
$|(B+x)\cap (B+y)|\geq k$ for all $x,y\in P$. \\
The set $B$ is {\em a $k$--{\bf npots}--set\/} if it is not $k$--{\bf pots}.  
\item We say that $B$ has {\em $\lambda$ many pairwise $k$--nondisjoint
    translations\/} if for some set $X\subseteq\can$ of cardinality
  $\lambda$, for all $x,y\in X$ we have $\big|(B+x)\cap (B+y)\big|\geq k$. 
\item We define the {\em spectrum of translation $k$--non-disjointness of
    $B$} as  
\[\stnd_k(B)=\{(x,y)\in\can\times\can: |(B+x)\cap (B+y)|\geq k\}.\] 
\end{enumerate} 
\end{definition}

\begin{remark}
  \begin{enumerate}
\item Note that if $B\subseteq \can$ is an uncountable Borel set, then there is a 
perfect set $P\subseteq B$. For $B,P$ as abovefor every $x,y\in P$ we have
$0=x+x=y+y \in  (B+x) \cap (B+y)$ and $x+y\in (B+x) \cap
(B+y)$. Consequently every  uncountable Borel subset of $\can$ is a
$2$--{\bf pots}--set.   
\item Assume $B\subseteq \can$ and $x,y\in\can$. If $b_x,b_y\in B$ and
  $b_x+x=b_y+y\in (B+x)\cap (B+y)$, then also $b_x+y=b_y+x\in (B+x)\cap
  (B+y)$. Consequently, if $(B+x)\cap (B+y)\neq\emptyset$ is finite, then it
  has an even number of elements.   
  \end{enumerate}
\end{remark}

\begin{proposition}
\label{proptostart}
\begin{enumerate}
\item Let $1\leq k\leq\con$. A set $B\subseteq \can$ is a $k$--{\bf
    pots}--set if and only if there  is a perfect set $P\subseteq\can$ such
  that $P\times P\subseteq\stnd_k(B)$.
\item Assume $k<\omega$. If $B$ is $\Sigma^0_2$, then $\stnd_k(B)$ is
  $\Sigma^0_2$ as well. If $B$ is Borel, then $\stnd_k(B)$ and
  $\stnd_\omega(B)$ are $\Sigma^1_1$ and $\stnd_\con(B)$ is $\Delta^1_2$.  
\item Let $\con<\kappa\leq\mu$ and let $\bbC_\mu$ be the forcing notion
  adding $\mu$  Cohen reals. Then, remembering Definition \ref{otps}(2),  
\[\begin{array}{l}
\forces_{\bbC_\mu}\mbox{`` if a Borel set $B\subseteq\can$ has $\kappa$ many
  pairwise $k$--non-disjoint translates,}\\
\qquad\quad\mbox{then $B$ is an $k$--{\bf pots}--set ''}.
\end{array}\]
\item If $k<\omega$, $B$ is a (code for) $\Sigma^0_2$ $k$--{\bf npots}--set
  and $\bbP$ is a forcing notion, then $\forces_\bbP$`` $B$ is a (code for)
  $k$--{\bf npots}--set ''. 
\item Assume ${\rm Pr}_{\omega_1}(\lambda)$. If $k\leq\omega$ and a Borel
  set $B\subseteq\can$ has $\lambda$ many pairwise $k$--nondisjoint
  translates, then it is a $k$--{\bf pots}--set. 
\end{enumerate}
\end{proposition}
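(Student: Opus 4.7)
The proposition bundles five related statements; I would handle them in the order (1), (2), (4), (5), (3), since (4) uses (2) and (3) uses (5). Part (1) is a direct rephrasing of Definition \ref{otps}(1): by definition $(x,y)\in\stnd_k(B)$ iff $|(B+x)\cap(B+y)|\geq k$, so $P\times P\subseteq\stnd_k(B)$ is verbatim the condition that $P$ witness $k$--{\bf pots}. For (2), write the $\Sigma^0_2$ set $B$ as an increasing union $B=\bigcup_n F_n$ of closed sets. Then $|(F_n+x)\cap(F_n+y)|\geq k$ amounts to the existence of $m$ and of $k$ distinct clopens $[\sigma_1],\ldots,[\sigma_k]$ at level $m$ with each $(F_n+x)\cap(F_n+y)\cap[\sigma_i]\neq\emptyset$; since the projection of a closed subset of $\can\times\can$ along the compact factor $\can$ remains closed, each such nonemptiness condition is closed in $(x,y)$. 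Hence $\stnd_k(B)$ is a countable union of closed sets, i.e., $\Sigma^0_2$. The Borel claims then follow from standard complexity calculations: existential quantification over finitely many reals sends Borel to $\Sigma^1_1$ (this handles $\stnd_k(B)$ and $\stnd_\omega(B)$); for $\stnd_\con(B)$ one uses that $(B+x)\cap(B+y)$ is Borel, so its cardinality equals $\con$ iff it is uncountable iff it contains a perfect subset, and the $\Sigma^1_1$/$\Pi^1_1$ characterisations of these alternatives yield $\Delta^1_2$. Part (4) follows by combining (2) with $\Sigma^1_1$--absoluteness: writing $\stnd_k(B)=\bigcup_n G_n$ with the $G_n$ closed and increasing, any perfect $P$ with $P\times P\subseteq\stnd_k(B)$ actually satisfies $P\times P\subseteq G_N$ for some single $N$, by compactness of $P\times P$; so ``$B$ is $k$--{\bf pots}'' is equivalent to the $\Sigma^1_1$ statement ``there exist $N$ and a perfect tree $T$ with $[T]\times[T]\subseteq G_N$'', which is upward-absolute to any forcing extension.

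Part (5) is the main technical step. Enumerate the translates as $\langle\eta_\alpha:\alpha<\lambda\rangle$ and fix a Borel code for $B$. I would build a model $\bbM$ with universe $\lambda$ in a countable relational vocabulary whose relations encode at each finite precision the clopen approximations of $B+\eta_\alpha$, the pairwise comparison of $\eta_\alpha$ and $\eta_\beta$, and the finite data witnessing $(\eta_\alpha,\eta_\beta)\in\stnd_k(B)$; by hypothesis every pair from $\lambda$ realises the latter. Observation \ref{obsonrk}(2) together with ${\rm Pr}_{\omega_1}(\lambda)$ lets me fix a nonempty finite $w\subseteq\lambda$ with $\rk(w,\bbM)\geq\omega_1$ and a quantifier--free witnessing formula $\varphi$ asserting that the associated translates realise $k$--nondisjointness of the Borel code. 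I then iterate clause (c) of Definition \ref{defofrank} to run a fusion: at stage $n$ I maintain a height--$n$ binary tree $T_n$ and an injection $\iota_n$ of its leaves into $\lambda$ so that $\varphi$ holds on the associated tuples, so that the rank of those tuples remains high enough to keep splitting at the next stage, and so that the translates $\eta_{\iota_n(\sigma)}$ for distinct leaves $\sigma$ have already diverged in a way compatible with a perfect tree of depth $n$. Passing to the limit yields a perfect set $P\subseteq\can$, and the formula $\varphi$ preserved throughout exactly forces $P\times P\subseteq\stnd_k(B)$.

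The delicate point in the previous paragraph, where Shelah's rank machinery genuinely earns its keep, is to design the vocabulary and the witnessing formula so that a single quantifier--free statement, stable under clause (c)'s ``replace one coordinate by a new one'' operation, is in the limit strong enough to certify membership of pairs of branches in $\stnd_k(B)$; this typically forces a Ramsey-style reduction along the way to a single ``colour'' $n$ for which all the relevant pairs land in the $n$-th closed approximation $F_n^{(k)}$ of $\stnd_k(B)$. Once (5) is in hand, Part (3) follows by combining it with Corollary \ref{lamCoh}: after forcing with $\bbC_\mu$ one has ${\rm Pr}_{\omega_1}(\kappa)$ available for $\kappa$ above $\beth_{\omega_1}$, so (5) applies directly in that range. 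For $\kappa$ merely above $\con^V$ but possibly below $\beth_{\omega_1}$, one first uses the ccc of $\bbC_\mu$ and a $\Delta$--system/reflection argument to locate the Borel code of $B$ and a family of translates of the required size inside an appropriate subextension, where (5) can be invoked against a suitably chosen rank model.
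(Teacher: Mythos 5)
Parts (1), (2) and (4) match the paper, and your observation in (4) that the compactness of $[T]\times[T]$ lets one bound the $\Sigma^0_2$ approximation by a single closed layer --- so that ``$B$ is $k$--{\bf pots}'' is actually $\Sigma^1_1$, not merely $\Sigma^1_2$ --- is a valid sharpening (the paper invokes Shoenfield on a $\Sigma^1_2$ statement; Mostowski absoluteness for $\Sigma^1_1$ also suffices). For (5), the paper simply cites \cite[Claim 1.12(1)]{Sh:522}, applied to the $\Sigma^1_1$ set $\stnd_k(B)$ via (1) and (2); your reconstruction has the right flavor but leaves the central technical step open. Clause (c) of Definition \ref{defofrank} replaces a single coordinate and grows the set $w$ by one, but the certifying formula $\varphi$ stays a fixed quantifier--free formula in a fixed number of variables. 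To run the fusion you describe, the witnessing formula must be re-chosen at each level to control a growing tuple of translates, and for $k=\omega$ there is no ``finite data'' attached to a point of $\stnd_\omega(B)$ at all; this is exactly where the cited Claim 1.12 earns its keep, and you flag it as ``delicate'' without resolving it.

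The more serious gap is (3). You propose to deduce (3) from (5), noting that Corollary \ref{lamCoh} gives $\lambda_{\omega_1}\leq\beth_{\omega_1}^{V}$ in the $\bbC_\mu$--extension, and then covering the range $\con^V<\kappa<\beth_{\omega_1}^V$ by a ``$\Delta$--system/reflection argument'' and ``a suitably chosen rank model.'' But ${\rm Pr}_{\omega_1}(\kappa)$ for such $\kappa$ is not known in the extension, and the reduction to (5) requires precisely that. The $\Delta$--system cleanup gets you that the $\kappa$ translates are Borel images of a $\kappa$--family of mutually Cohen reals over a fixed intermediate model, but turning this homogeneity into a high--rank model for an arbitrary $\kappa>\con^V$ is a genuine argument, not a corollary of (5) --- and indeed the paper does not argue this way. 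It instead cites \cite[Fact 1.16]{Sh:522}, which is a direct mutual--genericity statement for Cohen forcing about when a Borel relation acquires a perfect square, and combines it with (1) and (2). As written, your route for (3) has a hole in exactly the range of $\kappa$ you single out.
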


\begin{proof}
(2)\quad Let $B=\bigcup\limits_{n<\omega} F_n$, where each $F_n$ is a closed
subset of $\can$. Then 
\[\begin{array}{l}
(x,y)\in \stnd_k(B)\Leftrightarrow\\
\qquad \big(\exists n_0,\ldots, n_{k-1},m_0,\ldots,m_{k-1},N<\omega
    \big)\big(\exists z_0,\ldots,z_{k-1}\in\can\big) \big(\forall i,j<k\big) \Big(\\
\qquad\qquad\qquad\qquad 
(i\neq j \Rightarrow z_i\rest N\neq z_j\rest N)\ \wedge \
    z_i+x\in F_{n_i}\ \wedge \ z_i+y\in F_{m_i}\Big)  
\end{array}\]
The formula $\big(\forall i,j<k\big) \big((i\neq j \Rightarrow
z_i\rest N\neq z_j\rest N)\ \wedge \ z_i+x\in F_{n_i}\ \wedge \ z_i+y\in
F_{m_i} \big)$ represents a compact subset of $\big(\can\big)^{k+2}$
and hence easily the assertion follows.  
\medskip

\noindent (3)\quad This is a consequence of (1,2) above and Shelah
\cite[Fact 1.16]{Sh:522}. 
\medskip

\noindent (4)\quad If $B$ is a $\Sigma^0_2$ set then the formula ``there is a
perfect set $P\subseteq \can$ such that for all $x,y\in P$ we have $(x,y)\in 
\stnd_k(B)$ '' is $\Sigma^1_2$ (remember (2) above). 
\medskip

\noindent (5)\quad By \cite[Claim 1.12(1)]{Sh:522}. 
\end{proof}
\bigskip

We want to analyze $k$--{\bf pots}--sets in more detail, restricting
ourselves to $\Sigma^0_2$ subsets of $\can$. For the rest of this
section we assume the following Hypothesis.

\begin{hypothesis}
\label{hyp}
  \begin{enumerate}
\item $T_n\subseteq {}^{\omega>} 2$ is a tree with no maximal nodes
    (for $n<\omega$);
\item $B=\bigcup\limits_{n<\omega} \lim(T_n)$, $\bar{T}=\langle T_n:
  n<\omega \rangle$;
\item $2\leq\iota<\omega$, $k=2\iota$. 
  \end{enumerate}
\end{hypothesis}

\begin{definition}
  \label{mtkDef}
Let $\Mtk$ consist of all tuples 
\[\bm=(\ell_\bm,u_\bm,\bar{h}_\bm,\bar{g}_\bm)=(\ell,u,\bar{h},\bar{g})\]
such that:
\begin{enumerate}
\item[(a)] $0<\ell<\omega$, $u\subseteq {}^\ell 2$ and $2\leq |u|$;
\item[(b)] $\bar{h}=\langle h_i:i<\iota\rangle$, $\bar{g}=\langle g_i:i<
  \iota\rangle$ and for each $i<\iota$ we have 
\[h_i:u^{\langle 2\rangle}\longrightarrow \omega\quad\mbox{ and }\quad 
    g_i:u^{\langle 2\rangle} \longrightarrow \bigcup_{n<\omega}( T_n\cap
    {}^\ell 2);\]
\item[(c)] $g_i(\eta,\nu)\in T_{h_i(\eta,\nu)}\cap {}^\ell 2$ for all
  $(\eta,\nu)\in u^{\langle 2\rangle}$, $i<\iota$;
\item[(d)] if $(\eta,\nu)\in u^{\langle 2\rangle}$ and $i<\iota$, then
  $\eta+ g_i(\eta,\nu) =\nu+ g_i(\nu,\eta)$;  
\item[(e)] for any $(\eta,\nu)\in u^{\langle 2\rangle}$, there are no
  repetitions in the sequence $\langle g_i(\eta,\nu),g_i(\nu,\eta):
  i<\iota\rangle$. 
\end{enumerate}
\end{definition}

\begin{definition}
\label{traDef}
Assume $\bm=(\ell,u,\bar{h},\bar{g})\in \Mtk$ and $\rho\in {}^\ell 2$. We
define $\bm+\rho=(\ell',u',\bar{h}',\bar{g}')$ by
\begin{itemize}
\item $\ell'=\ell$, $u'=\{\eta+\rho:\eta\in u\}$,
\item $\bar{h}'=\langle h'_i:i<\iota\rangle$ where $h'_i:(u')^{\langle
    2\rangle} \longrightarrow \omega$ are such that
  $h'_i(\eta+\rho,\nu+\rho)=h_i(\eta,\nu)$ for $(\eta,\nu)\in u^{\langle
    2\rangle}$, 
\item $\bar{g}'=\langle g'_i:i<\iota\rangle$ where $g'_i:(u')^{\langle 2\rangle}
  \longrightarrow \bigcup\limits_{n<\omega} (T_n\cap {}^\ell 2)$ are such
  that $g'_i (\eta+\rho, \nu+\rho)=g_i(\eta,\nu)$ for $(\eta,\nu)\in
  u^{\langle 2\rangle}$.
\end{itemize}
Also if $\rho\in\can$, then we set $\bm+\rho=\bm+(\rho\rest\ell)$. 
\end{definition}

\begin{observation}
  \begin{enumerate}
\item If $\bm\in \Mtk$ and $\rho\in {}^{\ell_\bm}2$, then $\bm+\rho
  \in\Mtk$. 
\item For each $\rho\in\can$ the mapping
\[\Mtk\longrightarrow\Mtk:\bm\mapsto\bm+\rho\]
is a bijection.
  \end{enumerate}
\end{observation}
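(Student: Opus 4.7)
The statement is essentially a bookkeeping verification that translation by a fixed $\rho$ preserves all the structural requirements of $\Mtk$ and yields an involution. The whole argument rests on the fact that, in $(\can,+)$, every element is self-inverse, so the map $\eta\mapsto\eta+\rho$ is an involution of ${}^\ell 2$ for every $\ell\leq\omega$.

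For part (1), the plan is to check the clauses (a)--(e) of Definition \ref{mtkDef} one by one for $\bm+\rho=(\ell',u',\bar{h}',\bar{g}')$. Clause (a) is immediate: $\ell'=\ell_\bm$ and, since $\eta\mapsto \eta+\rho$ is a bijection of ${}^\ell 2$, the set $u'$ is a subset of ${}^\ell 2$ of the same cardinality as $u$, hence still of size $\geq 2$. For (b), the same bijection shows $(u')^{\langle 2\rangle}=\{(\eta+\rho,\nu+\rho):(\eta,\nu)\in u^{\langle 2\rangle}\}$, so the defining formulas $h'_i(\eta+\rho,\nu+\rho)=h_i(\eta,\nu)$ and $g'_i(\eta+\rho,\nu+\rho)=g_i(\eta,\nu)$ unambiguously specify total functions with the required codomains. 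Clause (c) is preserved because $g'_i(\eta+\rho,\nu+\rho)=g_i(\eta,\nu)\in T_{h_i(\eta,\nu)}\cap{}^\ell 2=T_{h'_i(\eta+\rho,\nu+\rho)}\cap{}^\ell 2$. For clause (d), we just add $\rho$ to both sides of $\eta+g_i(\eta,\nu)=\nu+g_i(\nu,\eta)$ to obtain $(\eta+\rho)+g'_i(\eta+\rho,\nu+\rho)=(\nu+\rho)+g'_i(\nu+\rho,\eta+\rho)$. Clause (e) is immediate since the new sequence at $(\eta+\rho,\nu+\rho)$ is literally the old sequence at $(\eta,\nu)$.

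For part (2), the key observation is that the translation operation is self-inverse, i.e.\ $(\bm+\rho)+\rho=\bm$. Indeed, the universe returns to $\{(\eta+\rho)+\rho:\eta\in u\}=u$, and for the auxiliary functions one computes, using $(x+\rho)+\rho=x$, that the double-translated $h''_i,g''_i$ agree with $h_i,g_i$ on $u^{\langle 2\rangle}$. Combined with part (1), this shows the map $\bm\mapsto\bm+\rho$ sends $\Mtk$ into itself and is its own two-sided inverse, hence a bijection.

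There is no real obstacle here; the only mild point of care is that the defining equations for $\bar{h}'$ and $\bar{g}'$ are well-defined as functions on $(u')^{\langle 2\rangle}$, which needs the injectivity of $\eta\mapsto\eta+\rho$ on ${}^\ell 2$. Once that is recorded, the rest is substitution.
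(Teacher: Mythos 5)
Your proof is correct and is exactly the routine verification the paper leaves implicit (the statement is an Observation, given without proof). You check clauses (a)--(e) of Definition \ref{mtkDef} clause by clause, using that $\eta\mapsto\eta+\rho$ is a bijection of ${}^\ell 2$ and that in $(\can,+)$ every element is its own inverse; the only place worth flagging, which you did, is that injectivity of $\eta\mapsto\eta+\rho$ is needed to make the defining equations for $\bar h'$ and $\bar g'$ unambiguous, and the self-inverse property gives $(\bm+\rho)+\rho=\bm$ for part (2). Nothing to add.
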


\begin{definition}
\label{extDef}
Assume $\bm,\bn\in\Mtk$. We say that {\em $\bn$ extends $\bm$\/}
($\bm\sqsubseteq \bn$ in short) if and only if:
\begin{itemize}
\item $\ell_\bm\leq \ell_\bn$, $u_\bm=\{\eta\rest\ell_\bm:\eta\in u_\bn\}$,
  and 
\item for every $(\eta,\nu)\in (u_\bn)^{\langle 2\rangle }$ such that
  $\eta\rest \ell_\bm \neq \nu\rest\ell_\bm$ and each $i<\iota$ we have  
\[h^\bm_i(\eta\rest \ell_\bm,\nu\rest \ell_\bm)= h^\bn_i(\eta,\nu)\quad
  \mbox{ and }\quad g^\bm_i(\eta\rest \ell_\bm,\nu\rest \ell_\bm)=
  g^\bn_i(\eta,\nu)\rest \ell_\bm.\] 
\end{itemize}
\end{definition}

\begin{definition}
\label{ndrkdef}
We define a function $\ndrk:\Mtk\longrightarrow {\rm ON}\cup\{\infty\}$
declaring inductively when $\ndrk(\bm)\geq\alpha$ (for an ordinal
$\alpha$). 
\begin{itemize}
\item $\ndrk(\bm)\geq 0$ always;
\item if $\alpha$ is a limit ordinal, then 
\[\ndrk(\bm)\geq \alpha\Leftrightarrow (\forall\beta<\alpha)(\ndrk(\bm)\geq
  \beta);\]
\item if $\alpha=\beta+1$, then $\ndrk(\bm)\geq \alpha$ if and only if for
  every $\nu\in u_\bm$ there is $\bn\in \Mtk$ such that
  $\ell_\bn>\ell_\bm$, $\bm\sqsubseteq\bn$ and $\ndrk(\bn)\geq \beta$ and 
\[|\{\eta\in u_\bn:\nu\vtl\eta\}|\geq 2;\] 
\item $\ndrk(\bm)=\infty$ if and only if $\ndrk(\bm)\geq \alpha$ for all
  ordinals $\alpha$. 
\end{itemize}
We also define 
\[\NDRK(\bar{T})=\sup\{\ndrk(\bm)+1:\bm\in\Mtk\}.\]
\end{definition}

\begin{lemma}
\label{lemonrk}
  \begin{enumerate}
\item The relation $\sqsubseteq$ is a partial order on $\Mtk$.
\item If $\bm,\bn\in\Mtk$ and $\bm\sqsubseteq\bn$ and $\alpha\leq
  \ndrk(\bn)$, then $\alpha\leq\ndrk(\bm)$. 
\item The function $\ndrk$ is well defined.
\item If $\bm\in\Mtk$ and $\rho\in\can$ then $\ndrk(\bm)=\ndrk(\bm+\rho)$. 
\item If $\bm\in\Mtk$, $\nu\in u_\bm$ and $\ndrk(\bm)\geq \omega_1$, then
  there is an $\bn\in\Mtk$ such that $\bm\sqsubseteq\bn$,
  $\ndrk(\bn)\geq\omega_1$, and  
\[|\{\eta\in u_\bn:\nu\vtl\eta\}|\geq 2.\]
\item If $\bm\in\Mtk$ and $\infty>\ndrk(\bm)=\beta>\alpha$, then there is
  $\bn\in \Mtk$ such that $\bm\sqsubseteq \bn$ and $\ndrk(\bn)=\alpha$. 
\item If $\NDRK(\bar{T})\geq \omega_1$, then $\NDRK(\bar{T})=\infty$. 
\item Assume $\bm\in\Mtk$ and $u'\subseteq u_\bm$, $|u'|\geq 2$. Put
  $\ell'=\ell_m$, $h_i'=h_i^\bm\rest u^{\langle 2\rangle}$ and $g_i'=g_i^\bm
  \rest u^{\langle 2\rangle}$ (for $i<\iota$), and let  $\bm\rest
  u'=(\ell',u', \bar{h}', \bar{g}')$. Then $\bm\rest u'\in \Mtk$
  and $\ndrk(\bm)\leq \ndrk(\bm\rest u')$. 
  \end{enumerate}
\end{lemma}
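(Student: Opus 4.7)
The plan is to organize the proof into four blocks. Parts (1)--(4) are structural, (5) and (7) rest on a pigeonhole argument combined with the countability of $\Mtk$, (6) is a case-by-case induction on $\beta$, and (8) is a direct induction.

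For (1), reflexivity and transitivity of $\sqsubseteq$ are immediate from Definition \ref{extDef}; antisymmetry follows because $\bm \sqsubseteq \bn$ and $\bn \sqsubseteq \bm$ force $\ell_\bm = \ell_\bn$, whence $u_\bm = u_\bn$ and the functions $h_i,g_i$ must coincide. For (2) I induct on $\alpha$: given $\bm \sqsubseteq \bn$ and any $\nu \in u_\bm$, pick some $\nu' \in u_\bn$ with $\nu \vtl \nu'$; a successor-step witness $\bn'$ for $\bn$ at $\nu'$ also witnesses the same condition for $\bm$ at $\nu$, by transitivity of $\sqsubseteq$. Part (3) then amounts to observing that for each $\bm$ the class $\{\alpha : \ndrk(\bm) \geq \alpha\}$ is downward closed, so either it is all ordinals (and $\ndrk(\bm) = \infty$) or it has a maximum. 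For (4), the key is that $\bm \mapsto \bm + \rho$ is a bijection on $\Mtk$ commuting with $\sqsubseteq$ and preserving the branching condition $|\{\eta \in u_\bn : \nu \vtl \eta\}| \geq 2$ (since $\nu \vtl \eta$ iff $\nu + \rho \vtl \eta + \rho$); the rank equality then follows by a direct induction on $\alpha$.

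For (5) the crucial point is that $\Mtk$ is countable: at each length $\ell$ only countably many tuples $(\ell, u, \bar h, \bar g)$ can arise, since $u \subseteq {}^\ell 2$ is finite and each $h_i$ takes values in $\omega$ with finite domain. Given $\ndrk(\bm) \geq \omega_1$ and $\nu \in u_\bm$, for each $\alpha < \omega_1$ a successor-step application supplies a witness $\bn_\alpha \sqsupseteq \bm$ with branching above $\nu$ and $\ndrk(\bn_\alpha) \geq \alpha$; since all $\bn_\alpha$ live in a countable set, pigeonhole produces one $\bn$ that serves as $\bn_\alpha$ for uncountably many $\alpha$, whence $\ndrk(\bn) \geq \omega_1$. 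Part (7) will then follow: $\NDRK(\bar T) \geq \omega_1$ together with countability of $\Mtk$ forces some $\bm$ with $\ndrk(\bm) \geq \omega_1$ (otherwise the supremum would be a countable sup of countable ordinals, hence countable). A straightforward induction on $\alpha$, using (5) at each successor step to supply witnesses still of rank $\geq \omega_1$, will then yield $\ndrk(\bm) \geq \alpha$ for every ordinal $\alpha$, i.e.\ $\ndrk(\bm) = \infty$.

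Part (6) will be proved by induction on $\beta$. In the successor case $\beta = \gamma + 1$, the failure of $\ndrk(\bm) \geq \gamma + 2$ supplies a ``bad'' $\nu^* \in u_\bm$ such that no proper $\sqsubseteq$-extension of $\bm$ branching above $\nu^*$ has rank $\geq \gamma + 1$, while $\ndrk(\bm) \geq \gamma + 1$ applied to $\nu^*$ produces some such extension $\bn^*$ with $\ndrk(\bn^*) \geq \gamma$; together these force $\ndrk(\bn^*) = \gamma$. For $\alpha < \gamma$ the inductive hypothesis applied to $\bn^*$ finishes the job. The limit case is analogous: a bad $\nu^*$ coming from $\ndrk(\bm) \not\geq \beta + 1$, combined with the witness supplied by $\ndrk(\bm) \geq \alpha + 1$, yields an extension $\bn^*$ of rank in $[\alpha, \beta)$, after which the inductive hypothesis produces the required extension of rank exactly $\alpha$. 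The main bookkeeping obstacle here is keeping the branching requirement aligned with the strict rank inequality. Finally, for (8), verifying that $\bm \rest u' \in \Mtk$ is a direct check of Definition \ref{mtkDef}. For the rank inequality $\ndrk(\bm) \leq \ndrk(\bm \rest u')$, I induct on $\alpha$: given $\ndrk(\bm) \geq \beta + 1$ and $\nu \in u' \subseteq u_\bm$, take a witness $\bn \sqsupseteq \bm$ for $\bm$ at $\nu$, set $u'' = \{\eta \in u_\bn : \eta \rest \ell_\bm \in u'\}$, and note that any $\eta \in u_\bn$ with $\nu \vtl \eta$ automatically satisfies $\eta \rest \ell_\bm = \nu \in u'$, so $\eta \in u''$. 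Thus the branching above $\nu$ is preserved, $\bm \rest u' \sqsubseteq \bn \rest u''$, and the inductive hypothesis gives $\ndrk(\bn \rest u'') \geq \ndrk(\bn) \geq \beta$, yielding $\ndrk(\bm \rest u') \geq \beta + 1$.
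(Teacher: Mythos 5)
Your proposal is correct and essentially matches the paper's proof: parts (1)--(4) and (8) are the same routine inductions, (5) uses the countability of the relevant subset of $\Mtk$ exactly as the paper does (your pigeonhole phrasing is equivalent to the paper's supremum argument), and (6) is the same ``bad $\nu^*$'' argument. The one small deviation is part (7), which the paper derives from (6) while you derive it from (5) by first using countability of $\Mtk$ to locate some $\bm$ with $\ndrk(\bm)\geq\omega_1$ and then inducting on $\alpha$ with (5) supplying witnesses of rank $\geq\omega_1$ at each successor step -- both routes are valid; also note a harmless slip in (2): the element $\nu'\in u_\bn$ lying above $\nu\in u_\bm$ satisfies $\nu\trianglelefteq\nu'$ rather than necessarily $\nu\vtl\nu'$ (they coincide when $\ell_\bm=\ell_\bn$), but the argument is unaffected.
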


\begin{proof}
(1)\quad Should be clear.
\medskip

\noindent (2)\quad Induction on $\alpha$. If $\alpha=\alpha_0+1$ and
$\bn'\sqsupseteq\bn$ is one of the witnesses used to claim that
$\ndrk(\bn)\geq \alpha_0+1$, then this $\bn'$ can also be used for
$\bm$. Hence we can argue the successor step of the induction. The limit
steps are even easier. 
\medskip

\noindent  (3)\quad One has to show that if $\beta<\alpha$ and
  $\ndrk(\bm)\geq \alpha$, then $\ndrk(\bm)\geq \beta$.  This can be shown
  by induction on $\alpha$: at the successor stage if $\bn$ is one of the
  witnesses used to claim that $\ndrk(\bm)\geq \alpha+1$, then
  $\ndrk(\bn)\geq\alpha$. By (2) we get $\ndrk(\bm)\geq\alpha$ and by the
  inductive hypothesis  $\ndrk(\bm)\geq \gamma$ for $\gamma\leq \alpha$.  
Limit stages should be  clear too.  \medskip

\noindent (4)\quad Should be clear. 
\medskip

\noindent (5)\quad Let $\cN$ be the collection of all $\bn\in\Mtk$ such that
$\bm\sqsubseteq\bn$ and $|\{\eta\in u_\bn:\nu\vtl\eta\}|\geq 2$. If
$\ndrk(\bn_0)\geq \omega_1$ for some $\bn_0\in\cN$, then we are done. So
suppose towards contradiction that there is no such $\bn_0$. Then, as $\cN$
is countable, 
\[\alpha_0\stackrel{\rm def}{=} \sup\{\ndrk(\bn)+1:\bn\in\cN\}<\omega_1.\]
But $\ndrk(\bm)\geq\alpha_0+1$ implies that $\ndrk(\bn_1)\geq\alpha_0$ for
some $\bn_1\in\cN$, a contradiction. 
\medskip

\noindent (6)\quad Induction on ordinals $\beta$ (for all $\alpha<
\beta$). The main point is that if $\ndrk(\bm)=\beta$, then for some
$\nu\in u_\bm$ we cannot find $\bn$ as needed for witnessing $\ndrk(\bm)
\geq\beta+1$, but for each $\gamma<\beta$ we can find $\bn$  needed for
$\ndrk(\bm)\geq\gamma+1$. Therefore for each $\gamma<\beta$ we may find
$\bn\sqsupseteq \bm$ such that $\gamma\leq\ndrk(\bn)<\beta$. 
\medskip

\noindent (7)\quad Follows from (6) above.
\medskip

\noindent (8)\quad It should clear that $(\ell',u',\bar{h}', \bar{g}')\in
\Mtk$. Also, by a straightforward induction on $\alpha$ for all $\bm$ and 
restrictions $\bm\rest u'$, one shows that  
\[\alpha\leq \ndrk(\bm)\ \Rightarrow\ \alpha\leq \ndrk(\bm\rest u').\]   
\end{proof}

\begin{proposition}
\label{eqnd}
The following conditions are equivalent.
\begin{enumerate}
\item[(a)] $\NDRK(\bar{T})\geq \omega_1$.
\item[(b)] $\NDRK(\bar{T})=\infty$.
\item[(c)] There is a perfect set $P\subseteq\can$ such that 
\[\big(\forall \eta,\nu\in P \big) \big (|(B+\eta)\cap (B+\nu)|\geq k 
\big).\]
\item[(d)] In some ccc forcing extension, there is $A\subseteq\can$ of 
  cardinality $\lambda_{\omega_1}$ such that 
\[\big(\forall \eta,\nu\in A\big) \big (|(B+\eta)\cap (B+\nu)|\geq k 
\big).\]
\end{enumerate}
\end{proposition}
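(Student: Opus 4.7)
The plan is to verify the cycle (a) $\Leftrightarrow$ (b) $\Leftrightarrow$ (c) $\Leftrightarrow$ (d) via: (a) $\Leftrightarrow$ (b) from Lemma \ref{lemonrk}(7); (b) $\Rightarrow$ (c) by a perfect-tree fusion; (c) $\Leftrightarrow$ (d) from Proposition \ref{proptostart}(4,5); and (c) $\Rightarrow$ (b) by a direct construction of an $\bm$ of unbounded rank from the perfect set, preceded by a Galvin-style refinement.

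For (b) $\Rightarrow$ (c), I will inductively build a $\sqsubseteq$-increasing sequence $\langle \bm_n : n<\omega\rangle$ in $\Mtk$ with $\ndrk(\bm_n)=\infty$ throughout, using a bookkeeping enumeration of pairs $(n,\nu)$ with $\nu\in u_{\bm_n}$ to guarantee that every such $\nu$ is eventually split: $|\{\eta\in u_{\bm_m}:\nu\vtl\eta\}|\geq 2$ for some $m>n$. The inductive step invokes Lemma \ref{lemonrk}(5) to split above the prescribed $\nu$ while maintaining $\ndrk\geq\omega_1$, upgraded to $\infty$ by Lemma \ref{lemonrk}(7). Setting $P=\{\eta\in\can:(\forall n)\ \eta\rest\ell_{\bm_n}\in u_{\bm_n}\}$ yields a perfect set. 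For distinct $\eta,\nu\in P$, fixing $N$ with $\eta\rest\ell_{\bm_N}\neq\nu\rest\ell_{\bm_N}$, the coherence clause of Definition \ref{extDef} forces $h_i^{\bm_m}(\eta\rest\ell_{\bm_m},\nu\rest\ell_{\bm_m})$ to be eventually constant and assembles the sequences $g_i^{\bm_m}(\eta\rest\ell_{\bm_m},\nu\rest\ell_{\bm_m})$ and $g_i^{\bm_m}(\nu\rest\ell_{\bm_m},\eta\rest\ell_{\bm_m})$ into $\vtl$-chains converging to branches $b_i(\eta,\nu), b_i(\nu,\eta)\in B$. Clause (d) of Definition \ref{mtkDef} then yields intersection points $\eta+b_i(\eta,\nu)=\nu+b_i(\nu,\eta)\in(B+\eta)\cap(B+\nu)$, distinct by clause (e), giving the required $k=2\iota$ elements.

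For (c) $\Leftrightarrow$ (d), I invoke Proposition \ref{proptostart}. For (c) $\Rightarrow$ (d), I force with $\bbC_{\beth_{\omega_1}}$; in $V[G]$, Corollary \ref{lamCoh} gives $\lambda_{\omega_1}\leq\beth_{\omega_1}\leq\con$, the perfect set $P\in V$ remains perfect with $|P|^{V[G]}=\con^{V[G]}\geq\lambda_{\omega_1}^{V[G]}$, and $P\times P\subseteq\stnd_k(B)$ persists by upward absoluteness of the $\Sigma^0_2$ relation $\stnd_k(B)$ (Proposition \ref{proptostart}(2)); any $A\subseteq P$ of size $\lambda_{\omega_1}^{V[G]}$ witnesses (d). For (d) $\Rightarrow$ (c), inside the ccc extension $V[G]$ the definition of $\lambda_{\omega_1}^{V[G]}$ gives ${\rm Pr}_{\omega_1}(\lambda_{\omega_1}^{V[G]})$, so Proposition \ref{proptostart}(5) applied in $V[G]$ makes $B$ a $k$-\textbf{pots}-set in $V[G]$, and Proposition \ref{proptostart}(4) transfers the $k$-\textbf{pots} property back to $V$.

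For (c) $\Rightarrow$ (b), given perfect $P$ with $P\times P\subseteq\stnd_k(B)$, I will first refine via a Galvin-style Ramsey argument applied to the countable $\Sigma^0_2$ cover $\{E_{\vec n,\vec m}:(\vec n,\vec m)\in\omega^{2k}\}$ of $P^{\langle 2\rangle}$, where $(p,q)\in E_{\vec n,\vec m}$ iff there exist distinct $z_0,\ldots,z_{k-1}$ with $z_j+p\in\lim(T_{n_j})$ and $z_j+q\in\lim(T_{m_j})$; this yields a perfect $P'\subseteq P$ and fixed $(\vec n^*,\vec m^*)$ with $(P')^{\langle 2\rangle}\subseteq E_{\vec n^*,\vec m^*}$. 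A Borel selection of witnesses together with a further refinement controlling the $\ell$-restrictions of the chosen $b_j(p,q),c_j(p,q)\in B$ lets me construct $\bm(P_0,\ell)\in\Mtk$ for every finite $P_0\subseteq P'$ of size $\geq 2$ and every sufficiently large $\ell$, coherently under $\sqsubseteq$ when $P_0$ grows to $P_1$ and $\ell$ to $\ell'$; perfection of $P'$ then supplies splits above any prescribed $\nu\in u_{\bm(P_0,\ell)}$, and induction on $\alpha$ gives $\ndrk(\bm(P_0,\ell))\geq\alpha$ for every ordinal $\alpha$, so $\NDRK(\bar T)=\infty$. The main obstacle is precisely this coherence of the intersection-point selection under enlargement of the pair-set: the new intersection points must restrict at level $\ell$ to the previously-chosen ones on pairs sharing $\ell$-initial segments, which is not automatic and forces the preliminary monochromatic refinement together with delicate Borel uniformization of the $\ell$-restrictions.
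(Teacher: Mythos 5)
Your implications (a)$\Leftrightarrow$(b), (b)$\Rightarrow$(c), and (c)$\Rightarrow$(d) follow the paper's route. Your (d)$\Rightarrow$(c) via Proposition \ref{proptostart}(5) applied in $V[G]$ and then $\Sigma^1_2$-absoluteness (\ref{proptostart}(4)) back to $V$ is logically fine, but note that \ref{proptostart}(5) is an import from \cite[Claim 1.12(1)]{Sh:522}, whereas the paper's own proof of (d)$\Rightarrow$(a) is the tree-specific re-derivation of that fact --- and is needed anyway to connect a large set of translates to the combinatorial rank $\ndrk$ on $\Mtk$ (which is the whole point of items (a),(b)).

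The genuine gap is in your proposed (c)$\Rightarrow$(b). You need \emph{some} arrow from $\{$(c),(d)$\}$ back to $\{$(a),(b)$\}$ to close the cycle, and your candidate rests on a Galvin-style step that is false as stated: a countable cover of $P^{\langle 2\rangle}$ by $\Sigma^0_2$ sets need not admit a perfect $P'\subseteq P$ with $(P')^{\langle 2\rangle}$ inside a single piece. The standard counterexample is the ``first split level'' coloring $c(p,q)=\min\{n:p(n)\neq q(n)\}$: each color class is clopen and the classes cover $(\can)^{\langle 2\rangle}$, yet on any perfect set at least two colors occur. (Galvin's theorem for perfect sets gives monochromatic perfect subsets for \emph{finitely} many colors; for $\omega$ many colors one only gets a ``canonical'' form, which does not pin down a single $(\vec n^*,\vec m^*)$.) Even if the refinement were available, you yourself flag that the coherence of the witness selection under enlarging the finite set and the level $\ell$ --- exactly what is needed to assemble a $\sqsubseteq$-chain of members of $\Mtk$ with unbounded $\ndrk$ --- is ``not automatic'' and is left as a ``delicate Borel uniformization''; that is where the real work lives, and it is not done.

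The paper closes the cycle at (d)$\Rightarrow$(a) instead: working in $V[G]$ with the $\lambda_{\omega_1}$-sized family $\langle\eta_\alpha\rangle$, it builds a model $\bbM$ on $\lambda_{\omega_1}$ in the vocabulary $\{R_\bm:\bm\in\Mtk\}$ and proves (Claim \ref{cl5}) that $\rk(\{\alpha_0,\dots\},\bbM)\leq\ndrk(\bm)$ whenever $\bbM\models R_\bm[\alpha_0,\dots]$. By the definition of $\lambda_{\omega_1}$ the sup of $\rk(\cdot,\bbM)$ is $\geq\omega_1$, which forces $\NDRK(\bar T)\geq\omega_1$, and this is then transferred back to $V$ by absoluteness of $\ndrk$. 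The point of routing through (d) is precisely that the rank definition lets one pick, at each inductive step, a \emph{fresh} witness $\alpha^*$ together with a \emph{fresh} level $\ell$ (Claim \ref{cl5}(2)), which yields the coherence for free; a perfect set alone does not hand you this and your Ramsey refinement does not supply it. To repair your proof you should replace (c)$\Rightarrow$(b) by the paper's (d)$\Rightarrow$(a) (in which case the detour through \ref{proptostart}(5) becomes unnecessary).
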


\begin{proof}
${\rm (a)} \Rightarrow {\rm (b)}$\quad This is Lemma \ref{lemonrk}(7).
\medskip

\noindent ${\rm (b)} \Rightarrow {\rm (c)}$\quad If $\NDRK(\bar{T})=\infty$
then there is $\bm_0\in \Mtk$ with $\ndrk(\bm_0)\geq \omega_1$. Using Lemma
\ref{lemonrk}(5) we may now choose a sequence $\langle \bm_j:j<\omega\rangle
\subseteq \Mtk$ such that for each $j<\omega$:
\begin{enumerate}
\item[(i)] $\bm_j\sqsubseteq \bm_{j+1}$,
\item[(ii)] $\ndrk(\bm_j)\geq \omega_1$,
\item[(iii)] $|\{\eta\in u_{\bm_{j+1}}:\nu\vtl \eta|\geq 2$ for each $\nu\in
  u_{\bm_j}$. 
\end{enumerate}
Let $P=\{\rho\in\can:(\forall j<\omega)(\rho\rest \ell_{\bm_j}\in
u_{\bm_j})\}$. Clearly, $P$ is a perfect set. For $\eta,\nu\in P$,
$\eta\neq\nu$, let $j_0$ be the smallest such that $\eta\rest
\ell_{\bm_{j_0}} \neq \nu\rest \ell_{\bm_{j_0}}$ and let 
\[G_i(\eta,\nu)=\bigcup\big\{g_i^{\bm_j}(\eta\rest\ell_{\bm_j},
\nu\rest \ell_{\bm_j}): j\geq j_0\big\}\in\lim \Big(T_{h^{\bm_{j_0}}_i(
  \eta\rest\ell_{\bm_{j_0}}, \nu\rest\ell_{\bm_{j_0}})}\Big) \quad \mbox{
  for }i<\iota.\]  
Then $G_i:P^{\langle 2\rangle}\longrightarrow B$ and for $(\eta,\nu)\in
  P^{\langle 2\rangle}$ and $i<\iota$:
\[\eta+G_i(\eta,\nu)=\nu+G_i(\nu,\eta)\quad \mbox{ and }\quad 
\eta+G_i(\nu,\eta)=\nu+G_i(\eta,\nu).\]
Moreover, there are no repetitions in the sequence $\langle G_i(\eta,\nu),
G_i(\nu,\eta): i<\iota\rangle$. Hence, for distinct $\eta,\nu\in P$ we have
$|(B+\eta)\cap (B+\nu)|\geq 2\iota=k$. 
\medskip

\noindent ${\rm (c)} \Rightarrow {\rm (d)}$\quad Assume (c). Let
$\kappa=\beth_{\omega_1}$. By Corollary \ref{lamCoh} we know that
$\forces_{\bbC_\kappa} \lambda_{\omega_1}\leq \con$. Remembering Proposition
\ref{proptostart}(1,2), we note that the formula ``$P\times P\subseteq
\stnd_k(B)$'' is $\Pi^1_1$, so it holds in the forcing extension by
$\bbC_\kappa$. Now we easily conclude (d). 
\medskip

\noindent ${\rm (d)} \Rightarrow {\rm (a)}$\quad Assume (d) and let $\bbP$
be the ccc forcing notion witnessing this assumption, $G\subseteq\bbP$ be
generic over $\bV$. Let us work in $\bV[G]$. 

Let $\langle \eta_\alpha:\alpha<\lambda_{\omega_1}\rangle$ be
a sequence of distinct elements of $\can$ such that
\[\big(\forall \alpha<\beta<\lambda_{\omega_1}\big)\big(|(B+\eta_\alpha)\cap 
(B+\eta_\beta)|\geq k\big).\]
Let $\tau=\{R_\bm:\bm\in\Mtk\}$ be a (countable) vocabulary where each
$R_\bm$ is a $|u_\bm|$--ary relational symbol. Let
$\bbM=\big(\lambda_{\omega_1}, \big\{R^\bbM_\bm\big\}_{\bm\in \Mtk}\big)$ be
the model in the vocabulary $\tau$, where for $\bm=(\ell,u,\bar{h},
\bar{g})\in\Mtk$  the relation $R_\bm^\bbM$ is defined by    
\[\begin{array}{ll}
R^\bbM_\bm=&\Big\{(\alpha_0,\ldots,\alpha_{|u|-1})\in
(\lambda_{\omega_1})^{|u|}:\{\eta_{\alpha_0}\rest \ell,\ldots,
        \eta_{|u|-1}\rest \ell\} =u \mbox{ and}\\
&\qquad \mbox{for distinct }j_1,j_2<|u|\mbox{ there are }
  G_i(\alpha_{j_1},\alpha_{j_2})\mbox{ (for $i<\iota$) such that}\\
&\qquad  g_i(\eta_{\alpha_{j_1}}\rest \ell,\eta_{\alpha_{j_2}}\rest \ell)
  \vtl G_i(\alpha_{j_1},\alpha_{j_2})\in \lim\big(
  T_{h_i(\eta_{\alpha_{j_1}}\rest \ell,\eta_{\alpha_{j_2}}\rest \ell)} \big)
  \mbox{ and}\\
&\qquad\eta_{\alpha_{j_1}}+G_i(\alpha_{j_1},\alpha_{j_2}) =
  \eta_{\alpha_{j_2}}+ G_i(\alpha_{j_2}, \alpha_{j_1})\ \Big\}.
\end{array}\]

\begin{claim}
\label{cl5}
\begin{enumerate}
\item If $\alpha_0,\alpha_1,\ldots,\alpha_{j-1}<\lambda_{\omega_1}$ are
  distinct, $j\geq 2$, then for sufficiently large $\ell<\omega$ there is
  $\bm\in \Mtk$ such that  
\[\ell_\bm=\ell,\quad u_\bm=\{\eta_{\alpha_0}\rest \ell, \ldots,
\eta_{\alpha_{j-1}}\rest \ell\}\quad \mbox{ and }\quad \bbM\models
R_\bm[\alpha_0,\ldots,\alpha_{j-1}].\]
\item Assume that  $\bm\in\Mtk$, $j<|u_{\bm_0}|$, 
  $\alpha_0,\alpha_1,\ldots,\alpha_{|u_{\bm}|-1} < \lambda_{\omega_1}$ and
  $\alpha^*< \lambda_{\omega_1}$ are all pairwise distinct and such that
  $\bbM\models R_{\bm}[\alpha_0,\ldots,\alpha_j, \ldots, \alpha_{|u_{\bm}|-1}]$
  and  $\bbM\models R_{\bm}[\alpha_0,\ldots,\alpha_{j-1},\alpha^*,\alpha_{j+1},
\ldots \alpha_{|u_{\bm}|-1}]$. Then for every sufficiently large
$\ell>\ell_{\bm}$ there is $\bn\in \Mtk$ such that $\bm\sqsubseteq \bn$ 
and  
\[\ell_\bn=\ell,\quad u_\bn=\{\eta_{\alpha_0}\rest \ell, \ldots,
\eta_{\alpha_{|u_\bm|-1}}\rest \ell,\eta_{\alpha^*}\rest \ell\}\quad \mbox{
  and 
}\quad \bbM\models R_\bn[\alpha_0,\ldots,\alpha_{|u_\bm|-1},\alpha^*].\]
\item If $\bm\in\Mtk$ and $\bbM\models R_\bm[\alpha_0,\ldots,
  \alpha_{|u_\bm|-1}]$, then  
\[\rk(\{\alpha_0,\ldots,\alpha_{|u_\bm|-1}\},\bbM)\leq \ndrk(\bm).\]  
\end{enumerate}
\end{claim}

\begin{proof}[Proof of the Claim]
  (1)\quad For  distinct $j_1,j_2<j$ let $G_i(\alpha_{j_1},\alpha_{j_2})\in
  B$ (for $i<\iota$) be such that 
\[\eta_{\alpha_{j_1}}+G_i(\alpha_{j_1},\alpha_{j_2}) =
  \eta_{\alpha_{j_2}}+ G_i(\alpha_{j_2}, \alpha_{j_1})\]
and there are no repetitions in the sequence $\langle
G_i(\alpha_{j_1},\alpha_{j_2}), G_i(\alpha_{j_2},\alpha_{j_1}):
i<\iota\rangle$. Suppose that $\ell<\omega$ is such that for any distinct
$j_1,j_2<j$ we have $\eta_{\alpha_{j_1}}\rest \ell \neq
\eta_{\alpha_{j_2}}\rest \ell$ and there are no repetitions in the sequence
$\langle G_i(\alpha_{j_1},\alpha_{j_2})\rest \ell,
G_i(\alpha_{j_2},\alpha_{j_1})\rest \ell:i<\iota\rangle$. Now let 
$u=\{\eta_{\alpha_{j'}}\rest \ell: j'<j\}$, and for $i<\iota$ let
$g_i(\eta_{\alpha_{j_1}}\rest \ell,\eta_{\alpha_{j_2}}\rest
\ell)=G_i(\alpha_{j_1},\alpha_{j_2})\rest \ell$, and let
$h_i(\eta_{\alpha_{j_1}}\rest \ell,\eta_{\alpha_{j_2}}\rest \ell) <\omega$
be such that $G_i(\alpha_{j_1},\alpha_{j_2})\in
\lim\big(T_{h_i(\eta_{\alpha_{j_1}}\rest \ell,\eta_{\alpha_{j_2}}\rest\ell)}
\big)$.  It should be clear that this way we defined $\bm=(\ell,u,\bar{h},
\bar{g})\in \Mtk$ and $\bbM\models R_\bm[\alpha_0,\ldots, \alpha_{j-1}]$. 
\medskip

\noindent (2)\quad An obvious modification of the argument above. 
\medskip

\noindent (3)\quad By induction on $\beta$ we show that {\em for every\/}
$\bm\in\Mtk$ and {\em all\/} $\alpha_0,\ldots,\alpha_{|u_\bm|-1}<
\lambda_{\omega_1}$ such that $\bbM\models R_\bm[\alpha_0,\ldots,
\alpha_{|u_\bm|-1}]$: 
\begin{quotation}
$\beta\leq \rk(\{\alpha_0,\ldots,\alpha_{|u_\bm|-1}\}, \bbM)$ implies $\beta\leq  
\ndrk(\bm)$. 
\end{quotation}
\smallskip

\noindent {\sc Steps $\beta=0$ and $\beta$ is limit:}\quad should be clear. 
\smallskip

\noindent {\sc Step $\beta=\gamma+1$:}\quad Suppose $\bm\in\Mtk$ and 
$\alpha_0,\ldots,\alpha_{|u_\bm|-1}<\lambda_{\omega_1}$ are such that
$\bbM\models R_\bm[\alpha_0,\ldots, \alpha_{|u_\bm|-1}]$ and
$\gamma+1\leq \rk(\{\alpha_0,\ldots,\alpha_{|u_\bm|-1}\}, \bbM)$. Let
$\nu\in u_\bm$, so $\nu=\eta_{\alpha_j}\rest \ell_\bm$ for some $j<|u_\bm|$.
Since $\gamma+1\leq \rk(\{\alpha_0,\ldots,\alpha_{|u_\bm|-1}\}, \bbM)$ we may
find $\alpha^*\in \lambda_{\omega_1}\setminus\{\alpha_0,\ldots,
\alpha_{|u_\bm|-1}\}$ such that $\bbM\models R_\bm[\alpha_0,\ldots,
\alpha_{j-1},\alpha^*, \alpha_{j+1}, \ldots,\alpha_{|u|-1}]$ and
$\rk(\{\alpha_0,\ldots,\alpha_{|u|-1}, \alpha^*\}, \bbM)\geq \gamma$. Taking
sufficiently large $\ell$ we may use clause (2) to find $\bn\in\Mtk$ such
that $\bm\sqsubseteq \bn$, $\ell_\bn=\ell$ and $\bbM\models
R_\bn[\alpha_0,\ldots,\alpha_{|u_\bm|-1},\alpha^*]$ and $|\{\eta\in
u_\bn:\nu\vtl\eta\}|\geq 2$. By the inductive hypothesis we have also
$\gamma\leq \ndrk(\bn)$.  Now we may easily conclude that $\gamma+1\leq
\ndrk(\bm)$. 
\end{proof}

By the definition of $\lambda_{\omega_1}$, 
\begin{enumerate}
\item[$(\odot)$] $\sup\{\rk(w,\bbM):\emptyset\neq w\in
  [\lambda_{\omega_1}]^{<\omega} \}\geq\omega_1$ 
\end{enumerate}
Now, suppose that $\beta<\omega_1$. By $(\odot)$, there are distinct 
$\alpha_0,\ldots,\alpha_{j-1}<\lambda_{\omega_1}$, $j\geq 2$, such that
$\rk(\{\alpha_0,\ldots, \alpha_{j-1}\},\bbM)\geq \beta$. By Claim \ref{cl5}(1)
we may find $\bm\in\Mtk$ such that $\bbM\models
R_\bm[\alpha_0,\ldots,\alpha_{j-1}]$. Then by Claim \ref{cl5}(3) we also
have $\ndrk(\bm)\geq \beta$. Consequently, $\NDRK(\bar{T})\geq \omega_1$.  

All the considerations above where carried out in $\bV[G]$. However, the
rank function $\ndrk$ is absolute, so we may also claim that in $\bV$ we have
$\NDRK(\bar{T})\geq \omega_1$.   
\end{proof}

\begin{corollary}
\label{corlar}
Assume that $\vare\leq\omega_1$ and ${\rm Pr}_\vare(\lambda)$.  If there is
$A\subseteq\can$ of  cardinality $\lambda$ such that 
\[\big(\forall \eta,\nu\in A\big) \big (|(B+\eta)\cap (B+\nu)|\geq k 
\big),\]
then $\NDRK(\bar{T})\geq\vare$.
\end{corollary}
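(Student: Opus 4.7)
The plan is to repeat, almost verbatim, the argument for the implication ${\rm (d)}\Rightarrow {\rm (a)}$ of Proposition \ref{eqnd}, with $\lambda$ replacing $\lambda_{\omega_1}$ and $\vare$ replacing $\omega_1$. First I would enumerate $A$ injectively as $\langle \eta_\alpha:\alpha<\lambda\rangle$ and build the model $\bbM$ on universe $\lambda$ in the countable vocabulary $\tau=\{R_\bm:\bm\in\Mtk\}$ exactly as in that proof. The assumption $|(B+\eta)\cap(B+\nu)|\geq k=2\iota$ for all $\eta,\nu\in A$ is precisely what is needed to make the proofs of Claim \ref{cl5}(1),(2),(3) go through unchanged in this setting. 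In particular, whenever $\bbM\models R_\bm[\alpha_0,\ldots,\alpha_{|u_\bm|-1}]$ we will have
\[
\rk\big(\{\alpha_0,\ldots,\alpha_{|u_\bm|-1}\},\bbM\big)\leq \ndrk(\bm).
\]

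Next I would apply the hypothesis ${\rm Pr}_\vare(\lambda)$ to $\bbM$, yielding $\sup\{\rk(w,\bbM):\emptyset\neq w\in[\lambda]^{<\omega}\}\geq\vare$. To conclude $\NDRK(\bar T)\geq\vare$ it suffices to produce, for each $\gamma<\vare$, some $\bm\in\Mtk$ with $\ndrk(\bm)\geq\gamma$. Fix such a $\gamma$; whether $\vare$ is a successor $\gamma_0+1$ (with $\gamma\leq\gamma_0$) or a limit ordinal (with $\gamma+1<\vare$), the supremum condition supplies a non-empty finite $w\subseteq\lambda$ with $\rk(w,\bbM)\geq\gamma+1$. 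When $|w|\geq 2$, Claim \ref{cl5}(1) provides $\bm\in\Mtk$ with $\bbM\models R_\bm$ on the elements of $w$, and Claim \ref{cl5}(3) then gives $\ndrk(\bm)\geq\gamma+1$. When $w=\{a_0\}$, the successor clause of $\rk(\{a_0\},\bbM)\geq\gamma+1$ applied to a tautological quantifier-free formula furnishes $a^*\in\lambda\setminus\{a_0\}$ with $\rk(\{a_0,a^*\},\bbM)\geq\gamma$, and Claim \ref{cl5}(1),(3) then still deliver $\bm\in\Mtk$ with $\ndrk(\bm)\geq\gamma$.

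Since the construction and the verification of Claim \ref{cl5} are identical to what was done in Proposition \ref{eqnd}, no real obstacle arises. The only slightly delicate point is the case split between $|w|\geq 2$ and $|w|=1$ in the final step: for a successor $\vare=\gamma_0+1$ one cannot afford a two-step rank jump to handle the singleton witness in one go, but the singleton clause of the rank definition costs exactly the one successor step that is needed, so the bookkeeping works out.
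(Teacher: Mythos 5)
Your proposal is correct and takes essentially the same approach as the paper, whose own proof is simply a reference to the $(d)\Rightarrow(a)$ argument of Proposition~\ref{eqnd} with $\lambda$ replacing $\lambda_{\omega_1}$ and $\vare$ replacing $\omega_1$. Your explicit handling of the singleton witness via the successor clause, and the successor/limit bookkeeping for $\vare$, correctly fills in a step the paper leaves implicit when it asserts the existence of a witnessing set with $j\geq 2$ elements.
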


\begin{proof}
This is essentialy shown by the proof of the implication ${\rm (d)}
\Rightarrow {\rm (a)}$ of Proposition \ref{eqnd}. 
\end{proof}

\section{The forcing}

In this  section we construct a forcing notion adding a sequence
$\bar{T}$ of subtrees of ${}^{\omega>} 2$ such that
$\NDRK(\bar{T})<\omega_1$. The sequence $\bar{T}$ will be added by 
finite approximations, so it will be convenient to have finite version of
Definition \ref{mtkDef}.

\begin{definition}
  \label{fmtkDef}
Assume that 
\begin{itemize}
\item $2\leq\iota<\omega$, $k=2\iota$, and $0<n,M<\omega$, 
\item $\bar{t}=\langle t_m:m<M\rangle$, and  each $t_m$ is a subtree
  of ${}^{n\geq} 2$ in which all terminal  branches are of length $n$,
\item $T_j\subseteq {}^{\omega>} 2$ (for $j<\omega$) are trees with no
  maximal nodes, $\bar{T}=\langle T_j:j<\omega\rangle$ and $t_m=T_m\cap
  {}^{n\geq} 2$ for $m<M$,
\item $\Mtk$ is defined as in Definition \ref{mtkDef}.   
\end{itemize}
\begin{enumerate}
\item Let $\fMtk$ consist of all tuples
  $\bm=(\ell_\bm,u_\bm,\bar{h}_\bm,\bar{g}_\bm)\in \Mtk$ such that
  $\ell_\bm\leq n$ and $\rng(h^\bm_i)\subseteq M$ for each $i<\iota$. 
\item Assume $\bm,\bn\in\fMtk$. We say that {\em $\bm$, $\bn$ are
    essentially the same\/} ($\bm\doteqdot \bn$ in short) if and only if:
  \begin{itemize}
\item $\ell_\bm=\ell_\bn$, $u_\bm=u_\bn$ and 
\item for each $(\eta,\nu)\in (u_\bm)^{\langle 2\rangle}$ we have 
\[\big\{\{g_i^\bm(\eta,\nu), g_i^\bm(\nu,\eta)\}:i<\iota\big\} 
=\big\{\{g^\bn_i(\eta,\nu), g^\bn_i(\nu,\eta)\}:i<\iota\big\},\]
and for $i,j<\iota$:\\
if $g_i^\bm(\eta,\nu)= g^\bn_j(\eta,\nu)$, then $h_i^\bm(\eta,\nu)  
=h^\bn_j(\eta,\nu)$,\\
if $g_i^\bm(\eta,\nu)= g^\bn_j(\nu,\eta)$, then
$h_i^\bm(\eta,\nu)=h^\bn_j(\nu,\eta)$. 
\end{itemize}
\item Assume $\bm,\bn\in\fMtk$. We say that {\em $\bn$ essentially extends 
    $\bm$\/} ($\bm\sqsubseteq^* \bn$ in short) if and only if: 
  \begin{itemize}
\item $\ell_\bm\leq \ell_\bn$, $u_\bm=\{\eta\rest\ell_\bm:\eta\in u_\bn\}$,
  and 
\item for every $(\eta,\nu)\in (u_\bn)^{\langle 2\rangle }$ such that
  $\eta\rest \ell_\bm \neq \nu\rest\ell_\bm$ we have  
\[\big\{\{g_i^\bm(\eta\rest\ell_\bm,\nu\rest \ell_\bm), g_i^\bm(\nu\rest
  \ell_\bm, \eta\rest\ell_\bm)\}:i<\iota\big\}
  =\big\{\{g^\bn_i(\eta,\nu)\rest \ell_\bm, g^\bn_i(\nu,\eta)\rest
  \ell_\bm\}:i<\iota\big\},\]
and for $i,j<\iota$:\\
if $g_i^\bm(\eta\rest\ell_\bm,\nu\rest \ell_\bm)= g^\bn_j(\eta,\nu)\rest
\ell_\bm$, then $h_i^\bm(\eta\rest\ell_\bm,\nu\rest \ell_\bm)
=h^\bn_j(\eta,\nu)$,\\
if $g_i^\bm(\eta\rest\ell_\bm,\nu\rest \ell_\bm)= g^\bn_j(\nu,\eta)\rest
\ell_\bm$, then $h_i^\bm(\eta\rest\ell_\bm,\nu\rest \ell_\bm)=h^\bn_j(\nu,
\eta)$. 
\end{itemize}
\end{enumerate}
\end{definition}

\begin{observation}
If $\bm\in \fMtk$ and $\rho\in {}^{\ell_\bm}2$, then $\bm+\rho \in\fMtk$.  
\end{observation}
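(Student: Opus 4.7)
The plan is to reduce this to the observation stated immediately after Definition \ref{traDef}, which already handles membership in $\Mtk$. Since $\fMtk$ is defined as the subset of $\Mtk$ satisfying the two extra constraints $\ell_\bm\leq n$ and $\rng(h^\bm_i)\subseteq M$ for $i<\iota$, all that remains is to verify that these two constraints are preserved under the translation $\bm\mapsto \bm+\rho$.

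First I would invoke the earlier observation to conclude $\bm+\rho\in\Mtk$; this already takes care of clauses (a)--(e) of Definition \ref{mtkDef}. Next, directly from the definition of $\bm+\rho$ in Definition \ref{traDef} we have $\ell_{\bm+\rho}=\ell_\bm$, so the length bound $\ell_{\bm+\rho}\leq n$ is inherited from $\bm\in\fMtk$. Finally, for each $i<\iota$ the new function $h'_i$ is defined by $h'_i(\eta+\rho,\nu+\rho)=h_i(\eta,\nu)$ for $(\eta,\nu)\in u_\bm^{\langle 2\rangle}$; since $\bm\mapsto\bm+\rho$ bijects $u_\bm^{\langle 2\rangle}$ onto $u_{\bm+\rho}^{\langle 2\rangle}$, we get $\rng(h'_i)=\rng(h_i)\subseteq M$. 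This gives $\bm+\rho\in\fMtk$.

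There is no real obstacle here; the observation is a bookkeeping statement whose proof is a one-line unpacking of the definitions, included only to make explicit that the translation operation restricts from $\Mtk$ to $\fMtk$.
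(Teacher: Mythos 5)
Your proof is correct and follows exactly what the paper intends (the paper gives no explicit proof, treating this as an obvious consequence of the definitions). Reducing to the earlier observation that $\bm+\rho\in\Mtk$ and then checking that $\ell_{\bm+\rho}=\ell_\bm\leq n$ and $\rng(h'_i)=\rng(h_i)\subseteq M$ is precisely the one-line bookkeeping the observation records.
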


\begin{lemma}
 \label{litlem}
Let $0<\ell<\omega$ and let $\cB\subseteq {}^\ell 2$ be a linearly
independent set of vectors (in $({}^\ell2,+)$ over $(2,+_2,\cdot_2)$).
\begin{enumerate}
\item If $\cA\subseteq {}^\ell 2$, $|\cA|\geq 5$ and $\cA+\cA\subseteq \cB+\cB$, 
then for a unique $x\in {}^\ell 2$ we have $\cA+x\subseteq \cB$.  
\item Let $b^*\in \cB$. Suppose that  $\rho^0_i,\rho^1_i\in \big(\cB
  \cup (b^*+\cB)\big)\setminus\{{\mathbf 0},b^*\}$ (for $i<3$) are 
  such that   
  \begin{enumerate}
  \item[(a)] there are no repetitions in $\langle \rho^0_i,\rho^1_i:
    i<3\rangle$, and 
  \item[(b)] $\rho^0_i+\rho^1_i=\rho^0_j+\rho^1_j$ for $i<j<3$.
  \end{enumerate}
Then $\big\{\{\rho^0_i,\rho^1_i\}:i<3\}\subseteq \{\{b,b+b^*\}:b\in \cB,\
b\neq b^*\big\}$.
\end{enumerate}
\end{lemma}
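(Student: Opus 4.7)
For part (1), my plan is to first translate by any fixed $a_0\in\cA$, replacing $\cA$ with $\cA+a_0$, so we may assume $\mathbf{0}\in\cA$; then $\cA\subseteq \cA+\cA\subseteq \cB+\cB\subseteq \langle\cB\rangle$. Via the basis $\cB$, I identify $\langle\cB\rangle$ with $\mathbb{F}_2^{\cB}$, so that $\cB+\cB$ becomes the set of vectors of Hamming weight $0$ or $2$, and every nonzero $a\in\cA$ acquires a unique $2$-element support $S(a)\subseteq\cB$. For distinct nonzero $a,a'\in\cA$ the condition $a+a'\in\cB+\cB$ forces $|S(a)\triangle S(a')|=2$, i.e.\ $|S(a)\cap S(a')|=1$. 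I then form the family $\mathcal{F}:=\{S(a):\mathbf{0}\neq a\in\cA\}$ of at least $|\cA|-1\geq 4$ two-element subsets of $\cB$ with pairwise intersection one, and invoke the classical star/triangle dichotomy: such a family is either a star (every set through a common point) or sits inside a fixed $3$-set (hence has size $\leq 3$). As $|\mathcal{F}|\geq 4$ rules out the triangle, all $S(a)$ share a common $b^*\in\cB$, so every nonzero $a$ is $b^*+c_a$ with $c_a\in\cB\setminus\{b^*\}$; therefore $\cA+b^*\subseteq\cB$ and, untranslating, $x=a_0+b^*$ does the job. Uniqueness follows from a short observation: if $\cA+x,\cA+x'\subseteq\cB$ and $d:=x+x'\neq\mathbf{0}$, then for every $a\in\cA$ the identity $d=(a+x)+(a+x')$ exhibits $d$ as a sum of two distinct elements of $\cB$; by linear independence this representation is unique, so the $\geq 5$ unordered pairs $\{a+x,a+x'\}$ all coincide, forcing $|\cA+x|\leq 2$, a contradiction.

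For part (2), I will observe that $(\cB\cup(b^*+\cB))\setminus\{\mathbf{0},b^*\}$ is the disjoint union of the $2$-element cosets $\{b,b+b^*\}$ of $\langle b^*\rangle$ (for $b\in\cB\setminus\{b^*\}$), and that a pair of distinct elements from this set is of the form $\{b,b+b^*\}$ if and only if its sum equals $b^*$. So, setting $d:=\rho^0_0+\rho^1_0$ (constant in $i$ by clause (b)), the conclusion reduces to showing $d=b^*$. Writing each $\rho^e_i=\epsilon^e_ib^*+s^e_i$ uniquely with $\epsilon^e_i\in\{0,1\}$ and $s^e_i\in S:=\cB\setminus\{b^*\}$, the $S$-component of $d$ equals $s^0_i+s^1_i$; linear independence of $S$ then yields the dichotomy: either $s^0_i=s^1_i$ (in which case $\rho^0_i\neq\rho^1_i$ forces $\epsilon^0_i\neq\epsilon^1_i$ and gives $d=b^*$), or $\{s^0_i,s^1_i\}$ equals a fixed two-subset $\{s,s'\}\subseteq S$. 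In the second alternative every pair $\{\rho^0_i,\rho^1_i\}$ lies in the four-element pool $\{s,s',b^*+s,b^*+s'\}$, which admits only two distinct pairs summing to $d$, contradicting clause (a) once three such pairs are required. Hence the first alternative prevails for every $i$ and $d=b^*$.

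The main obstacle I anticipate is the $\mathbb{F}_2$-linear-algebra bookkeeping in part (1) that pins down $|S(a)\cap S(a')|=1$: the six elements $b_1(a),b_2(a),b_1(a'),b_2(a'),c_1,c_2$ (where $\{c_1,c_2\}$ is the representation of $a+a'$) sum to $\mathbf{0}$ in $\langle\cB\rangle$, so by linear independence of $\cB$ each appearing element must have even multiplicity; combined with the internal distinctness within each pair, the only possibility is exactly three distinct elements each appearing twice, which forces the three pairs to be the three $2$-subsets of a common $3$-set. Once this is in hand, the star/triangle dichotomy is a brief combinatorial lemma (three sets already determine star vs.\ triangle, and a fourth set destroys the triangle branch), the uniqueness argument in (1) is a one-liner, and all of (2) follows from the coset decomposition together with a short case analysis.
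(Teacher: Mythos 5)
Your proof is correct. There is no argument in the paper to compare it against: the paper's entire ``proof'' of this lemma is the remark that it is easy, together with a citation of \cite[Lemma 2.3]{RoRyxx} for part (1), so I can only verify your self-contained argument. In part (1), the reduction to $\mathbf{0}\in\cA$, the identification (inside $\langle\cB\rangle$) of $\cB+\cB$ with the vectors of weight $0$ or $2$, and the even-multiplicity count showing $|S(a)\cap S(a')|=1$ are all sound; the star/triangle dichotomy is exactly the right combinatorial lemma, and you use the hypothesis $|\cA|\geq 5$ precisely where it is needed, namely to get $|\mathcal F|\geq 4$ and so rule out the triangle branch (indeed $\cA=\{\mathbf 0,x+y,x+z,y+z\}$ with distinct $x,y,z\in\cB$ shows $|\cA|=4$ is genuinely insufficient). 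The uniqueness argument via the uniqueness of the representation of $d=x+x'$ as a sum of two distinct basis vectors is also correct. In part (2), the observation that $(\cB\cup(b^*+\cB))\setminus\{\mathbf{0},b^*\}$ decomposes as $\bigsqcup_{b\in\cB\setminus\{b^*\}}\{b,b+b^*\}$, the equivalence ``a pair from this set is a coset pair iff it sums to $b^*$,'' and the two-case analysis on the $\cB\setminus\{b^*\}$-component of $d$ (with the four-element pool argument killing the second case via clause~(a)) complete a correct proof.
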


\begin{proof}
Easy, for (1) see e.g.~\cite[Lemma 2.3]{RoRyxx}.
\end{proof}

\begin{theorem}
\label{522fortranslate}
Assume ${\rm NPr}_{\omega_1}(\lambda)$ and let $3\le \iota<\omega$. Then
there is a ccc forcing notion $\bbP$ of size $\lambda$ such that  
\[\begin{array}{l}
\forces_{\bbP}\mbox{`` for some $\Sigma^0_2$ $2\iota$--{\bf npots}--set
    $B\subseteq\can$ 
  there is a sequence
    }\langle\eta_\alpha:\alpha<\lambda\rangle\\
\qquad \mbox{ of distinct elements of $\can$ such that}\\
\qquad \big|(\eta_\alpha+B)\cap
(\eta_\beta+B)\big|\geq 2\iota\mbox{ for all }\alpha,\beta<\lambda\mbox{ ''.}
\end{array}\]
\end{theorem}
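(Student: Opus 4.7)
The plan is to build $\bbP$ as a forcing with finite approximations, following the pattern of \cite[Theorem 1.13]{Sh:522}. By Observation \ref{obsonrk}(2), fix a countable-vocabulary model $\bbM^*$ on $\lambda$ and an ordinal $\alpha_0 < \omega_1$ with $\rk(\{\alpha\},\bbM^*) \geq 0$ for every $\alpha < \lambda$ and $\rk(w,\bbM^*) < \alpha_0$ for every non-empty finite $w \subseteq \lambda$. A condition $p$ specifies a finite $w^p \subseteq \lambda$, integers $0 < n^p, M^p < \omega$, a sequence $\bar{t}^p = \langle t^p_m : m < M^p\rangle$ of finite subtrees of ${}^{n^p \geq}2$ with all maximal nodes at height $n^p$, an injection $\alpha \mapsto \eta^p_\alpha \in {}^{n^p}2$ ($\alpha\in w^p$), and, for each $(\alpha,\beta) \in (w^p)^{\langle 2\rangle}$ and $i<\iota$, labels $h^p_i(\alpha,\beta) < M^p$ and witnesses $g^p_i(\alpha,\beta) \in t^p_{h^p_i(\alpha,\beta)} \cap {}^{n^p}2$ obeying clauses (d)--(e) of Definition \ref{mtkDef}. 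For every $v\subseteq w^p$ with $|v|\geq 2$, these data assemble into a tuple $\bm^p_v \in \mathbf{M}^{n^p}_{\bar{t}^p,k}$, and the crucial legality clause is the \emph{rank control}
\[
\ndrk^{\mathrm{fin}}_{\bar{t}^p}(\bm^p_v)\ \leq\ \rk(v,\bbM^*),
\]
where $\ndrk^{\mathrm{fin}}_{\bar{t}^p}$ is the natural finite analogue of $\ndrk$ computed inside $\mathbf{M}^{n^p}_{\bar{t}^p,k}$ using essential extensions $\sqsubseteq^*$. The order declares $p \leq q$ when $w^p \subseteq w^q$, $n^p \leq n^q$, $M^p \leq M^q$, $t^q_m \cap {}^{n^p \geq}2 = t^p_m$ for $m < M^p$, and $\bm^p_v \sqsubseteq^* \bm^q_v$ for every $v \subseteq w^p$ with $|v|\geq 2$.

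Granting this setup, the argument splits into: \textbf{(I)} proving $\bbP$ is ccc; \textbf{(II)} standard density arguments ensuring that the generic $T_m = \bigcup\{t^p_m : p \in \name{G}_\bbP,\ m<M^p\}$ are trees with no maximal nodes, each $\alpha < \lambda$ is enumerated, and the $\eta_\alpha = \bigcup\{\eta^p_\alpha\}$ are pairwise distinct while the $g^p_i$-witnesses converge in $B = \bigcup_m \lim(T_m)$ to $\iota$ pairs certifying $|(B+\eta_\alpha)\cap (B+\eta_\beta)|\geq 2\iota$ for all $\alpha,\beta < \lambda$; \textbf{(III)} deducing $\NDRK(\bar{T}) \leq \alpha_0 < \omega_1$, whence by Proposition \ref{eqnd} ((c)$\Leftrightarrow$(a)) the set $B$ is $2\iota$-\textbf{npots}. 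Step (II) invokes Lemma \ref{litlem} whenever fresh $g$-witnesses must be installed: a linearly independent basis of ${}^{n^p}2$ provides enough freedom to maintain the non-repetition clause (e) while enlarging $n^p$ and $\bar{t}^p$. Step (III) is the payoff of the rank clause: any $\bm \in \Mtk$ in the generic extension is essentially extended arbitrarily long by $\bm^p_v$ for conditions $p$ in the generic filter, and the rank bound then gives $\ndrk(\bm) \leq \rk(v,\bbM^*) < \alpha_0$.

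The main obstacle is \textbf{(I)}. Given $\{p_\xi : \xi<\omega_1\} \subseteq \bbP$, use the $\Delta$-system lemma and a Ramsey/pigeonhole reduction to reach an uncountable $I$ on which $\{w^{p_\xi}\}_{\xi\in I}$ is a $\Delta$-system with root $w^*$, the parameters $n^{p_\xi},M^{p_\xi},\bar{t}^{p_\xi}$ are constant, the tuples $(\bar{\eta}^{p_\xi},\bar{h}^{p_\xi},\bar{g}^{p_\xi})$ are pairwise isomorphic via the natural order-preserving bijection fixing $w^*$, and $\rk(v,\bbM^*)$ is constant across corresponding $v$'s. To amalgamate $p_{\xi_0}$ and $p_{\xi_1}$, union their data on $w^{p_{\xi_0}}\cup w^{p_{\xi_1}}$ and invoke Lemma \ref{litlem} together with a harmless enlargement of $\bar{t}$ and $n$ to install fresh $h,g$ witnesses on the cross-pairs $(\alpha,\beta)$ with $\alpha\in w^{p_{\xi_0}}\setminus w^*$, $\beta\in w^{p_{\xi_1}}\setminus w^*$. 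Verifying the rank-control inequality for every $v$ straddling the two wings is the heart of the argument, and is the single step in which ${\rm NPr}_{\omega_1}(\lambda)$ is actually consumed: any essential $\sqsubseteq^*$-extension $\bn$ of $\bm^q_v$ putatively attaining $\ndrk^{\mathrm{fin}}_{\bar{t}^q}(\bn) \geq \rk(v,\bbM^*) + 1$ would, when translated via the homogeneous quantifier-free type of $v$ in $\bbM^*$ and the definition of $\rk$, yield a witness that $\rk(v,\bbM^*) \geq \rk(v,\bbM^*) + 1$, a contradiction. Once this combinatorial core is in place the remaining verifications are routine.
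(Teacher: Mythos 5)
Your outline matches the paper's high--level strategy (finite approximations, $\Delta$--system argument for ccc, density claims, and a final appeal to Proposition~\ref{eqnd} via a bound on $\NDRK$), but the crucial legality clause you propose is not the one that makes the argument work, and the gap is genuine.

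You take as your key constraint a ``finite rank'' inequality $\ndrk^{\mathrm{fin}}_{\bar t^p}(\bm^p_v)\leq\rk(v,\bbM^*)$ computed inside $\fMtk$, and expect this to survive into the limit. It does not: when $p\leq q$ the trees $\bar t^q$ are strictly larger, so a tuple $\bm^q_v$ admits essential extensions in $\mathbf{M}^{n^q}_{\bar t^q,k}$ that simply had no analogue at stage $p$. There is no monotonicity relating $\ndrk^{\mathrm{fin}}_{\bar t^p}(\bm^p_v)$ to $\ndrk^{\mathrm{fin}}_{\bar t^q}(\bm^q_v)$, and \emph{a fortiori} none relating these finite ranks to the true $\ndrk$ computed in the generic $\Mtk$. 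Your step (III) therefore does not go through: bounding the finite rank at each stage gives no bound on $\ndrk(\bm)$ for $\bm\in\Mtk$ in the extension. The paper never puts a rank inequality in the conditions at all; it instead imposes three combinatorial clauses ($(*)_9$, $(*)_{10}$, $(*)_{11}$), plus an auxiliary parameter $\bar r^p=\langle r_m:m<M^p\rangle$ of ``release levels'' and the requirement that $\bar\eta^p$ be \emph{linearly independent} and $|w^p|\geq 5$. The rank bound then emerges only \emph{a posteriori}, in Claim~\ref{cl10}: given a putative chain $\bm_j$ with $\ndrk(\bm_j)\geq\omega_1$, one uses $(*)_{11}$ to match each $\bm_j$ (up to translation) with some $\bm^{p_j}(n^{p_j},w_j)$, then uses $(*)_9$ to show the $\rk(w_j)$ strictly decrease, and finally $(*)_{10}$ kills the $\rk=-1$ case.

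In particular you have no analogue of $(*)_{11}$, which (together with linear independence, $|w|\geq 5$, and Lemma~\ref{litlem}) is the recognizability clause of Claim~\ref{cl7}: it is what guarantees that \emph{every} $\bm\in\fMtk$ at level $n^p$ is, after a translation, one of the $\bm^p(n^p,w_*)$ coded by the condition. Without such a clause, the generic $\Mtk$ contains tuples $\bm$ with no tie to any $v\subseteq\lambda$, and the whole passage ``from $\bm$ to $\rk(v,\bbM^*)$'' in your step (III) is ill--founded. Similarly, your amalgamation sketch installs fresh $g$--witnesses on cross--pairs but does not explain how the new witnesses are kept recognizable; the paper handles this with a carefully chosen basis of small vectors $\nu_d,\nu_d^*$ and the bijection $\Theta$, precisely so that $(*)_{11}$ and $(*)_{10}$ can be re--verified for $q$ (and the $(*)_{10}$ verification is where ${\rm NPr}_{\omega_1}(\lambda)$ is genuinely consumed, via a pigeonhole over the $\Delta$--system to contradict $(\circledast)_{\rm e}$). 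Your appeal to ``homogeneous quantifier--free types'' for the corresponding step is too vague to substitute for this. The proposal needs to be restructured around a recognizability condition of type $(*)_{11}$ and a translation--coherence condition of type $(*)_9$ rather than around a finite rank inequality.
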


\begin{proof}
We may assume that $\lambda$ is uncountable. 

Fix a countable vocabulary $\tau=\{R_{n,\zeta}:n,\zeta< \omega\}$,  where
$R_{n,\zeta}$ is an $n$--ary relational symbol (for $n,\zeta<\omega$).  By
the assumption on $\lambda$, we may fix a model $\bbM=(\lambda,
\{R^\bbM_{n,\zeta}\}_{n,\zeta <\omega}) $ in the vocabulary $\tau$ with the 
universe $\lambda$ and an ordinal $\alpha^*<\omega_1$ such that: 
\begin{enumerate}
\item[$(\circledast)_{\rm a}$] for every $n$ and a quantifier free formula 
  $\varphi(x_0,\ldots,x_{n-1})\in \cL(\tau)$ there is $\zeta<\omega$ such
  that for all $a_0,\ldots, a_{n-1}\in \lambda$, 
\[\bbM\models\varphi[a_0,\ldots,a_{n-1}]\Leftrightarrow R_{n,\zeta}[a_0,\ldots,
a_{n-1}],\] 
\item[$(\circledast)_{\rm b}$] $\sup\{\rk(v,\bbM):\emptyset\neq v\in
  [\lambda]^{<\omega}\} <\alpha^*$, 
\item[$(\circledast)_{\rm c}$] the rank of every singleton is at least 0. 
\end{enumerate}
For a nonempty finite set $v\subseteq\lambda$ let $\rk(v)=\rk(v,\bbM)$, and 
let  $\zeta(v)<\omega$ and $k(v)<|v|$ be such that $R_{|v|,\zeta(v)},k(v)$ 
witness the rank of $v$. Thus letting $\{a_0,\ldots,a_k,\ldots a_{n-1}\}$ be the
increasing enumeration of $v$ and $k=k(v)$ and $\zeta= \zeta(v)$, we have    
\begin{enumerate}
\item[$(\circledast)_{\rm d}$] if $\rk(v)\geq 0$, then $\bbM\models
  R_{n,\zeta}[a_0,\ldots,a_k,\ldots,a_{n-1}]$ but there is no $a\in
  \lambda\setminus v$  such that  
\[\rk(v\cup\{a\})\geq \rk(v)\ \mbox{ and }\ \bbM\models R_{n,\zeta}
[a_0,\ldots,a_{k-1},a,a_{k+1},\ldots,a_{n-1}],\]  
\item[$(\circledast)_{\rm e}$] if $\rk(v)=-1$, then $\bbM\models R_{n,\zeta}  
  [a_0,\ldots,a_k,\ldots,a_{n-1}]$ but the set 
\[\big\{a\in\lambda:\bbM\models \varphi[a_0,\ldots,a_{k-1},a,a_{k+1},
  \ldots,a_{n-1}]\big\}\] 
is countable.  
\end{enumerate}
Without loss of generality we may also require that (for $\zeta=\zeta(v)$,
$n=|v|$) 
\begin{enumerate}
\item[$(\circledast)_{\rm f}$] for every $b_0,\ldots,b_{n-1}<\lambda$ 
\[\mbox{if }\ \bbM\models R_{n,\zeta}[b_0,\ldots,b_{n-1}] \mbox{ then }\
  b_0<\ldots <b_{n-1}.\]  
\end{enumerate}

\bigskip

Now we will define a forcing notion $\bbP$. {\em A condition\/} $p$ in
$\bbP$ is a tuple 
\[\big(w^p,n^p,M^p,\bar{\eta}^p,\bar{t}^p,\bar{r}^p,\bar{h}^p,\bar{g}^p,
\cM^p\big)= \big(w,n,M,\bar{\eta},\bar{t}, \bar{r},
\bar{h},\bar{g},\cM \big)\]     
such that the following demands $(*)_1$--$(*)_{11}$ are satisfied. 
\begin{enumerate}
\item[$(*)_1$] $w\in [\lambda]^{<\omega}$, $|w|\geq 5$, $0<n,M<\omega$. 
\item[$(*)_2$] $\bar{\eta}=\langle \eta_\alpha:\alpha\in w\rangle$ is a
  sequence of linearly independent vectors in ${}^n 2$ (over the field
  $\bbZ_2$); so in particular $\eta_\alpha\in {}^n2$ are pairwise
  distinct non-zero sequences (for $\alpha\in w$).  
\item[$(*)_3$] $\bar{t}= \langle t_m:m<M\rangle$, where $\emptyset\neq 
  t_m\subseteq {}^{n\geq} 2$ for $m<M$ is a tree in which all terminal
  branches are of length $n$ and $t_m\cap t_{m'}\cap {}^n 2=\emptyset$ for
  $m<m'<M$. 
\item[$(*)_4$] $\bar{r}=\langle r_m:m<M\rangle$, where $0<r_m\leq n$
  for $m<M$. 
\item[$(*)_5$] $\bar{h}=\langle h_i:i<\iota\rangle$, where $h_i:w^{\langle
    2\rangle}\longrightarrow M$. 
\item[$(*)_6$] $\bar{g} =\langle g_i:i<\iota\rangle$, where $g_i:w^{\langle
    2\rangle} \longrightarrow \bigcup\limits_{m<M} (t_m\cap {}^n 2)$, and 
$g_i(\alpha,\beta)\in t_{h_i(\alpha,\beta)}$ and  $\eta_\alpha+
  g_i(\alpha,\beta) =\eta_\beta+ g_i(\beta,\alpha)$ for  $(\alpha,\beta)\in
  w^{\langle 2\rangle}$ and $i<\iota$.  
\item[$(*)_7$] There are no repetitions in the sequence 
\[\langle g_i(\alpha,\beta): i<\iota,\ (\alpha,\beta)\in w^{\langle
  2\rangle}\rangle.\]  
\item[$(*)_8$] $\cM$ consists of all those $\bm\in\fMtk$ (see Definition
  \ref{fmtkDef}) that for some $\ell_*,w_*$ we  have  
\begin{enumerate}
\item[$(*)_8^{\rm a}$] $w_*\subseteq w$, $5\leq |w_*|$,
  $0<\ell_\bm=\ell_*\leq n$,   and for each $(\alpha,\beta)\in
  (w_*)^{\langle 2\rangle}$ and $i<\iota$ we have
  $r_{h_i(\alpha,\beta)}\leq \ell_*$,
\item[$(*)_8^{\rm b}$] $u_\bm=\{\eta_\alpha\rest \ell_*: \alpha\in
  w_*\}$ and $\eta_\alpha\rest\ell_*\neq \eta_\beta\rest\ell_*$ for distinct
  $\alpha,\beta\in w_*$,
\item[$(*)_8^{\rm c}$] $\bar{h}_\bm=\langle h^\bm_i:i<\iota\rangle$, where 
\[h^\bm_i:(u_\bm)^{\langle 2\rangle} \longrightarrow M:(\eta_\alpha\rest 
  \ell_*,\eta_\beta\rest \ell_*)\mapsto h_i(\alpha,\beta),\]
\item[$(*)_8^{\rm d}$] $\bar{g}_\bm=\langle g^\bm_i:i<\iota\rangle$, where 
\[g^\bm_i:(u_\bm)^{\langle 2\rangle} \longrightarrow \bigcup\limits_{m<M}
  (t_m\cap {}^{\ell_*} 2):(\eta_\alpha\rest \ell_*,\eta_\beta\rest 
  \ell_*)\mapsto g_i(\alpha,\beta)\rest \ell_*\]
\end{enumerate}
In the above situation we will write $\bm=\bm(\ell_*,w_*)=
\bm^p(\ell_*,w_*)$. (Note that $w_*$ is not determined uniquely by $\bm$ and
we may have $\bm(\ell,w_0)=\bm(\ell,w_1)$ for distinct $w_0,w_1\subseteq
w$. Also, the conditions $(*)^{\rm a}_8$--$(*)^{\rm d}_8$ alone do not
necessarily determine an element of $\fMtk$, but clearly for each
$w_*\subseteq w$ of size $\geq 5$ we have $\bm^p(n^p,w_*)\in \cM^p$.)   
\item[$(*)_9$] If $\bm(\ell,w_0), \bm(\ell,w_1)\in\cM$ , $\rho\in {}^\ell
  2$ and $\bm(\ell,w_0)\doteqdot\bm(\ell,w_1)+\rho$, then $\rk(w_0)=\rk(w_1)$,
  $\zeta(w_0)=\zeta(w_1)$, $k(w_0)=k(w_1)$ and if $\alpha\in w_0$, $\beta\in
  w_1$ are such that $|\alpha\cap w_0|=k(w_0) =k(w_1)=|\beta\cap w_1|$,
  then $(\eta_\alpha\rest \ell)+\rho=\eta_\beta\rest\ell$.  
\item[$(*)_{10}$] If $\bm(\ell_*,w_*)\in \cM$, $\alpha\in w_*$, $|\alpha\cap w_*|=
  k(w_*)$, $\rk(w_*)=-1$, and $\bm(\ell_*,w_*)\sqsubseteq^* \bn\in \cM$,
  then   $|\{\nu\in u_\bn:(\eta_\alpha\rest\ell_*)\trianglelefteq \nu\}|=1$.   
\item[$(*)_{11}$] If $\rho^0_i,\rho^1_i\in\bigcup\limits_{m<M} (t_m\cap
  {}^n2)$ (for $i<\iota$) are such that 
  \begin{enumerate}
  \item[(a)] there are no repetitions in $\langle \rho^0_i,\rho^1_i:
    i<\iota\rangle$, and 
  \item[(b)] $\rho^0_i+\rho^1_i=\rho^0_j+\rho^1_j$ for $i<j<\iota$,
  \end{enumerate}
then for some $\alpha,\beta\in w$ we have 
\[\big\{\{\rho^0_i,\rho^1_i\}:i<\iota\big\}=\big\{\{g_i(\alpha,\beta),
g_i(\beta,\alpha)\}: i<\iota\big\}.\]
\end{enumerate}
To define {\em the order $\leq$ of $\bbP$\/} we declare for $p,q\in \bbP$ that
$p\leq q$\quad if and only if 
\begin{itemize}
\item $w^p\subseteq w^q$, $n^p\leq n^q$, $M^p\leq M^q$, and 
\item $t^p_m=t^q_m\cap {}^{n^p\geq} 2$ and $r^p_m=r^q_m$ for all $m<M^p$, and    
\item $\eta^p_\alpha\trianglelefteq \eta^q_\alpha$ for all $\alpha\in w^p$,
  and    
\item $h^q_i\rest (w^p)^{\langle 2\rangle}= h^p_i$ and $g^p_i(\alpha,\beta)
  \trianglelefteq g^q_i(\alpha,\beta)$ for $i<\iota$ and $(\alpha,\beta)\in 
  (w^p)^{\langle 2\rangle}$.
\end{itemize}
\medskip

\begin{claim}
\label{cl7}
Assume $p=\big(w,n,M,\bar{\eta},\bar{t},\bar{h},\bar{g},\cM
\big)\in\bbP$. If $\bm\in \fMtk$ is such that $\ell_\bm=n$ and $|u_\bm|\geq
5$, then for some $\rho\in {}^n 2$ and $\bn\in\cM$ we have
$(\bm+\rho)\doteqdot \bn$.  
\end{claim}

\begin{proof}[Proof of the Claim]
Let $\bm\in \fMtk$ be such that $\ell_\bm=n$. It follows from Definition
\ref{mtkDef}(d) and clauses $(*)_6+(*)_{11}$ that 
\begin{enumerate}
\item[$(\boxdot)$] for every $(\nu,\eta)\in (u_\bm)^{\langle 2\rangle}$
  there is $(\alpha,\beta)\in w^{\langle 2\rangle}$ such that
  $\nu+\eta=\eta_\alpha +\eta_\beta$.
\end{enumerate}
By Lemma \ref{litlem} for some $\rho$ we have $u_\bm+\rho\subseteq
\{\eta_\alpha:\alpha\in w\}$. Let $w_0=\{\alpha\in w: \eta_\alpha+\rho \in
u_\bm\}$ and $\bn=\bm^p(n,w_0)\in\cM$. Using clause $(*)_{11}$ again we
easily conclude $(\bm+\rho)\doteqdot \bn$.  (Note that since $t_m\cap
t_{m'}\cap {}^n 2=\emptyset$ for $m<m'<M$, $h^\bm_i(\eta,\nu)$ is determined
by $g^\bm_i(\eta,\nu)$.) 
\end{proof}

\begin{claim}
\label{cl8}
\begin{enumerate}
\item $\bbP\neq\emptyset$ and $(\bbP,\leq)$ is a partial order.
\item For each $\beta<\lambda$ and $n_0,M_0<\omega$ the set 
\[D_\beta^{n_0,M_0}=\big \{p\in\bbP: n^p>n_0\ \wedge\ M^p>M_0\ \wedge\
\beta\in w^p \big\}\]
is open dense in $\bbP$. 
\end{enumerate}
\end{claim}

\begin{proof}[Proof of the Claim]
(1)\quad Should be clear. 
\medskip

\noindent (2)\quad   Let $p\in\bbP$, $\beta\in\lambda\setminus w^p$. Put
$N=|w^p|\cdot \iota+2$.  

We will define a condition $q\in \bbP$ such that $q\geq p$ and 
\[w^q=w^p\cup\{\beta\},\quad n^q=n^p+N>n^p+1,\quad 
 M^q=M^p+N-2> M^p+1.\]
For $\alpha\in w^p$ we set $\eta^q_\alpha=\eta^p_\alpha\conc \langle
\underbrace{0, \ldots, 0}_{N}\rangle $ and we also let $\eta^q_\beta=\langle
\underbrace{0,\ldots, 0}_{n^p+1}\rangle \conc \langle
\underbrace{1,\ldots,1}_{N-1} \rangle$.  Next, if $(\alpha_0,\alpha_1)\in
(w^p)^{\langle 2\rangle}$, then for all $i<\iota$
\[h^q_i(\alpha_0,\alpha_1)=h^p_i(\alpha_0,\alpha_1)\quad \mbox{ and } \quad  
g^q_i(\alpha_0,\alpha_1)=g^p_i(\alpha_0,\alpha_1)\conc \langle
\underbrace{0, \ldots, 0}_{N}\rangle.\]
If $\alpha\in w^p$ and $j=|w^p\cap \alpha|$, then for  $i<\iota$:
\begin{itemize}
\item $g^q_i(\alpha,\beta)=\langle \underbrace{0,\ldots,0}_{n^p}\rangle
  \conc\langle 1\rangle \conc\langle\underbrace{0,\ldots,0}_{j\iota+i+1}\rangle 
  \conc\langle\underbrace{1,\ldots,1}_{N-j\iota-i-2} \rangle$,  
\item $g^q_i(\beta,\alpha)=\eta^p_\alpha\conc\langle \underbrace{1, \ldots,
    1}_{j\iota+i+2}\rangle \conc \langle \underbrace{0,
    \ldots,0}_{N-j\iota-i-2} \rangle$,  
\item $h^q_i(\beta,\alpha)=h^q_i(\alpha,\beta)=M^p+j\iota+i$. 
\end{itemize}
We also set:
\begin{itemize}
\item if $m<M^p$, then $r^q_m=r^p_m$ and 
\[t^q_m=\{\eta\in {}^{n^q\geq} 2: \eta\rest n^p\in t^p_m\ \wedge\ (\forall
j<n^q)( n^p\leq j<|\eta| \Rightarrow  \eta(j)=0)\}\]
and 
\item if $M^p\leq m<M^q$, $m=M^p+j\iota+i$, $i<\iota$ and 
  $j<|w^p|$, then  $r^q_m=n^q$ and 
\[t^q_m=\{g^q_i(\alpha,\beta)\rest \ell, g^q_i(\beta,\alpha)\rest \ell:
\ell\leq n^q\},\] 
where $\alpha\in w^p$ is such that $|\alpha\cap w^p|=j$. 
\end{itemize}
Now letting $\cM^q$ be defined as in $(*)_8$ we check that
\[q=\big(w^q,n^q,M^q,\bar{\eta}^q,\bar{t}^q,\bar{r}^q, \bar{h}^q,\bar{g}^q,  
\cM^p\big)\in \bbP.\] 
Demands $(*)_1$--$(*)_8$ are pretty straightforward.
\bigskip

\noindent {\bf RE $(*)_9$\ :}\quad To justify clause $(*)_9$, suppose that
$\bm^q(\ell,w_0), \bm^q(\ell,w_1)\in  \cM^q$, $\rho\in {}^\ell 2$ and
$\bm^q(\ell,w_0)\doteqdot \bm^q(\ell,w_1)+\rho$, and consider the following
two cases. 

\noindent {\sc Case 1:}\quad $\beta\notin w_0\cup w_1$\\
Then letting $\ell^*=\min(\ell,n^p)$ and $\rho^*=\rho\rest \ell^*$ we see
that  $\bm^p(\ell^*,w_0)\doteqdot\bm^p(\ell^*,w_1)+\rho^*$ (and both belong
to $\cM^p$). Hence clause $(*)_9$ for $p$ applies. 

\noindent {\sc Case 2:}\quad $\beta\in w_0\cup w_1$\\
Say, $\beta\in w_0$. If $\alpha\in w_0\setminus \{\beta\}$, then
$h^q_i(\alpha,\beta) =h^q_i(\beta,\alpha)\geq M^p$ and $r^q_{h^q_i(\alpha,
  \beta}=n^q$. Consequently, $\ell=n^q$. Moreover,  
\[(\gamma,\delta)\in (w^q)^{\langle 2\rangle}\ \wedge\ h^q_j(\gamma,
\delta)= h^q_i(\alpha,\beta)\quad \Rightarrow \quad
\{\gamma,\delta\}=\{\alpha,\beta\}.\] 
Therefore, $\beta\in w_1$ and $w_1=w_0$ and since $|w_1|\geq 5$, the linear
independence of $\bar{\eta}$ implies $\rho={\mathbf 0}$.
\bigskip

\noindent {\bf RE $(*)_{10}$\ :}\quad Concerning clause $(*)_{10}$, suppose
that $\bm^q(\ell_0,w_0),\bm^q(\ell_1,w_1) \in\cM^q$, $\alpha\in w_0$,
$|\alpha\cap w_0|= k(w_0)$, $\rk(w_0)=-1$, and
$\bm^q(\ell_0,w_0)\sqsubseteq^* \bm^q(\ell_1,w_1)$. Assume towards 
contradiction that there are $\alpha_0,\alpha_1\in  w_1$ such that
\[\eta^q_{\alpha_0}\rest \ell_1\neq \eta^q_{\alpha_1}\rest \ell_1\ \wedge \  
\eta^q_\alpha\rest\ell_0\vtl \eta^q_{\alpha_0}\ \wedge\ 
\eta^q_\alpha\rest\ell_0\vtl \eta^q_{\alpha_1}.\] 
Suppose $\beta\in w_0\cup w_1$. Then looking at the function $h^q_i$ in a
manner similar to considerations for clause $(*)_9$ we get $\beta\in w_0\cap
w_1$. Let $\beta'\in w_0\setminus \{\beta\}$. Then $h_0^q(\beta,\beta')\geq
M^p$ and hence $r_{h_0(\beta,\beta')}^q=n^q=\ell_0=\ell_1$, contradicting
our assumptions. Therefore $\beta\notin w_0\cup w_1$. But then we
immediately get contradiction with clause $(*)_{10}$ for $p$. 
\bigskip

\noindent {\bf RE $(*)_{11}$\ :}\quad Let us argue that $(*)_{11}$ is
satisfied as well and for this suppose that $\rho^0_i,\rho^1_i\in
\bigcup\limits_{m<M^q} (t_m\cap {}^{n^q}2)$ (for $i<\iota$) are such that   
  \begin{enumerate}
  \item[(a)] there are no repetitions in $\langle \rho^0_i,\rho^1_i:
    i<\iota\rangle$, and 
  \item[(b)] $\rho^0_i+\rho^1_i=\rho^0_j+\rho^1_j$ for $i<j<\iota$.
  \end{enumerate}
Clearly, if 
\begin{enumerate}
\item[$(\odot)_1$] all $\rho^0_i,\rho^1_i$ are from $\bigcup\limits_{m<M^p}
  t_m$,  
\end{enumerate}
then we may use the condition $(*)_{11}$ for $p$ and conclude that for some
$\alpha_0,\alpha_1\in w^p$ we have  
\[\big\{\{\rho^0_i,\rho^1_i\}:i<\iota\big\}=\big\{\{g_i(\alpha_0, \alpha_1),  
g_i(\alpha_1,\alpha_0)\}: i<\iota\big\}.\]
Now note that if $\rho_0,\rho_1,\rho_2,\rho_3\in \bigcup\limits_{m<M^q}
(t_m\cap {}^{n^q}2)$, $\rho_0+\rho_1=\rho_2+\rho_3$ and $\rho_0\in
\bigcup\limits_{m<M^p} (t_m\cap {}^{n^q}2)$ but $\rho_1\notin
\bigcup\limits_{m<M^p} (t_m\cap {}^{n^q}2)$, then $\{\rho_0,\rho_1\}=
\{\rho_2,\rho_3\}$. Hence easily, if $(\odot)_1$ fails we must have
\begin{enumerate}
\item[$(\odot)_2$] $\rho_i^0,\rho_i^1\in \bigcup\limits_{m=M^p}^{M^q-1}
  (t_m\cap {}^{n^q}2)$  for $i<\iota$.
\end{enumerate}
But then  necessarily 
\[\big\{\{\rho^0_i\rest [n^p,n^q), \rho^1_i\rest [n^p,n^q)\}: i<\iota \big\}
  \subseteq \big\{\{g_i(\alpha, \beta)\rest [n^p,n^q),
  g_i(\beta,\alpha)\rest [n^p,n^q)\}: i<\iota,\ \alpha\in w^p\big\}.\]
(Use Lemma \ref{litlem}(2), remember $\iota\geq 3$.) Since
$\big(g_i(\alpha,\beta)+ g_i(\beta,\alpha)\big)\rest n^p= \eta^p_\alpha$ we
easily conclude that for some $\alpha\in w^p$ we have 
\[\big\{\{\rho^0_i,\rho^1_i\}:i<\iota\big\}=\big\{\{g_i(\alpha, \beta),  
g_i(\beta,\alpha)\}: i<\iota\big\}.\]
\medskip

Finally, it should be clear that $q$ is a condition stronger than $p$. 
\end{proof}

\begin{claim}
  \label{cl9}
The forcing notion $\bbP$ has the Knaster property.
\end{claim}

\begin{proof}[Proof of the Claim]
Suppose that $\langle p_\xi:\xi<\omega_1\rangle$ is a sequence of pairwise 
distinct conditions from $\bbP$ and let
\[p_\xi=\big(w_\xi,n_\xi,M_\xi,\bar{\eta}_\xi, \bar{t}_\xi, \bar{r}_\xi, 
\bar{h}_\xi,\bar{g}_\xi, \cM_\xi\big)\]
where $\bar{\eta}_\xi=\langle\eta^\xi_\alpha: \alpha\in w_\xi\rangle$,
$\bar{t}_\xi=\langle t^\xi_m:m<M_\xi\rangle$, $\bar{r}_\xi=\langle
r^\xi_m:m<M_\xi\rangle$,  and $\bar{h}_\xi= \langle
h^\xi_i:i<\iota\rangle$, $\bar{g}_\xi=\langle g^\xi_i:i<\iota\rangle$.  By a
standard $\Delta$--system cleaning procedure we may find an uncountable set
$A\subseteq \omega_1$ such that the following demands $(*)_{12}$--$(*)_{15}$
are satisfied. 
\begin{enumerate}
\item[$(*)_{12}$] $\{w_\xi:\xi\in A\}$ forms a $\Delta$--system.
\item[$(*)_{13}$] If  $\xi,\varsigma\in A$, then $|w_\xi|=|w_{\varsigma}|$ ,
  $n_\xi=n_{\varsigma}$, $M_\xi=M_{\varsigma}$,  and
  $t^\xi_m=t^{\varsigma}_m$ and $r^\xi_m=r^\varsigma_m$ (for  $m<M_\xi$).      
\item[$(*)_{14}$] If $\xi<\varsigma$ are from $A$ and
  $\pi:w_\xi\longrightarrow w_{\varsigma}$ is the order isomorphism, then 
  \begin{enumerate}
  \item[(a)] $\pi(\alpha)=\alpha$ for $\alpha\in w_\xi\cap w_{\varsigma}$, 
  \item[(b)] if $\emptyset\neq v\subseteq w_\xi$, then $\rk(v)=\rk(\pi[v])$,
    $\zeta(v)= \zeta(\pi[v])$ and $k(v)=k(\pi[v])$, 
  \item[(c)] $\eta_\alpha^\xi=\eta_{\pi(\alpha)}^{\varsigma}$ (for
    $\alpha\in  w_\xi$), 
  \item[(d)] $g_i(\alpha,\beta)=g_i(\pi(\alpha),\pi(\beta))$ and
    $h_i(\alpha,\beta) =h_i(\pi(\alpha),\pi(\beta))$ for $(\alpha,\beta)\in
    (w_\xi)^{\langle 2\rangle}$ and $i<\iota$,
  \end{enumerate}
and 
\item[$(*)_{15}$] $\cM_\xi=\cM_\varsigma$ (this actually follows from the
previous demands). 
\end{enumerate}
Following the pattern of Claim \ref{cl8}(2) we will argue that for distinct
$\xi,\varsigma$ from $A$ the conditions $p_\xi,p_\varsigma$ are
compatible. So let $\xi,\varsigma\in A$, $\xi<\varsigma$ and let
$\pi:w_\xi\longrightarrow w_\varsigma$ be the order isomorphism.   
We will define $q=\big(w,n,M,\bar{\eta}, \bar{t}, \bar{r}, \bar{h},\bar{g}, 
\cM\big)$ where $\bar{\eta}=\langle\eta_\alpha: \alpha\in w\rangle$,
$\bar{t}=\langle t_m:m<M\rangle$, $\bar{r}=\langle r_m:m<M\rangle$, and
$\bar{h}= \langle h_i:i<\iota \rangle$, $\bar{g}=\langle g_i:i<\iota\rangle$.  

Let $w_\xi\cap w_\varsigma=\{\alpha_0,\ldots,\alpha_{k-1}\}$,
$w_\xi\setminus w_\varsigma=\{\beta_0,\ldots,\beta_{\ell-1}\}$ and
$w_\varsigma\setminus w_\xi=\{\gamma_0,\ldots,\gamma_{\ell-1}\}$ be
the increasing enumerations. 

We set $N_0=\iota\cdot \ell (\ell+k)+\iota\cdot\frac{\ell(\ell-1)}{2}+1$,
$N=N_0+\ell+1$, and we define  
\begin{enumerate}
\item[$(*)_{16}$] $w=w_\xi\cup w_\varsigma$, $n=n_\xi+N$, and
  $M=M_\xi+1$; 
\item[$(*)_{17}$] $\eta_\alpha=\eta_\alpha\conc \langle
  \underbrace{0, \ldots,0}_{N}\rangle $ for $\alpha\in w_\xi$ and we
  also let  for $c<\ell$
\[\eta_{\gamma_c} =\eta_{\gamma_c}^\xi\conc \langle 0\rangle \conc \langle  
  \underbrace{1,\ldots,1}_{N_0} \rangle \conc \langle  
  \underbrace{0,\ldots,0}_c \rangle \conc \langle  
  \underbrace{1,\ldots,1}_{\ell-c} \rangle.\]
\end{enumerate}
Next we are going to define $h_i(\alpha,\beta)$ and
$g_i(\alpha,\beta)$ for $(\alpha,\beta)\in w^{\langle
  2\rangle}$. For $d<N_0$ let  
\[\nu_d=\langle \underbrace{0,\ldots,0}_d\rangle \conc \langle
  1\rangle\conc \langle \underbrace{0,\ldots,0}_{N_0-d-1}\rangle \in 
  {}^{N_0} 2,\quad\mbox{ and }\quad \nu^*_d={\mathbf 1}+\nu_d \in
  {}^{N_0} 2\]  
and note that $\{\nu_d:d<N_0-1\}\cup\{ {\mathbf 1}\}$ are linearly
independent in ${}^{N_0} 2$. Fix a bijection
\[\Theta:(k\times\ell\times\iota\times\{0\}) \cup(\{(a,b)\in
\ell^2:a<b\}\times \iota\times\{1\})\cup  (\ell\times\ell\times 
  \iota\times\{2\})\longrightarrow N_0-1\]  
and define $h_i,g_i$ as follows. 
\begin{enumerate}
\item[$(*)_{18}^{\rm a}$]  If $(\alpha,\beta)\in (w_\xi)^{\langle
    2\rangle}$ and  $i<\iota$, then 

$h_i(\alpha,\beta)=h^\xi_i(\alpha,\beta)$ and $g_i(\alpha,\beta) =
g^\xi_i(\alpha,\beta)\conc \langle \underbrace{0, \ldots, 0}_{N}\rangle$. 
\item[$(*)_{18}^{\rm b}$] If $a<k$, $c<\ell$  and $i<\iota$, then
  $h_i(\alpha_a,\gamma_c)= h^\varsigma_i (\alpha_a,\gamma_c)$ and
  $h_i(\gamma_c, \alpha_a)= h^\varsigma_i (\gamma_c, \alpha_a)$, and
\[\begin{array}{l}
g_i(\alpha_a,\gamma_c) =g^\varsigma_i(\alpha_a,\gamma_c)\conc \langle
1\rangle \conc \nu_{\Theta(a,c,i,0)}\conc \langle  
  \underbrace{0,\ldots,0}_\ell \rangle\quad \mbox{ and }\\
 g_i(\gamma_c, \alpha_a) =g^\varsigma_i(\gamma_c,\alpha_a)\conc
    \langle  1\rangle \conc \nu^*_{\Theta(a,c,i,0)}\conc \langle  
  \underbrace{0,\ldots,0}_c\rangle \conc \langle  
  \underbrace{1,\ldots,1}_{\ell-c} \rangle.
\end{array}\]
\item[$(*)_{18}^{\rm c}$] If $b<c<\ell$  and $i<\iota$, then 
  $h_i(\gamma_b,\gamma_c)=h^\varsigma_i(\gamma_b,\gamma_c)$, $h_i(\gamma_c,
  \gamma_b)= h^\varsigma_i(\gamma_c,\gamma_b)$, and 
\[\begin{array}{l}
g_i(\gamma_b,\gamma_c) =g ^\varsigma_i(\gamma_b,\gamma_c)\conc \langle 
1\rangle \conc \nu_{\Theta(b,c,i,1)}\conc \langle  
  \underbrace{0,\ldots,0}_b\rangle \conc \langle  
  \underbrace{1,\ldots,1}_{\ell-b} \rangle\quad \mbox{ and }\\
 g_i(\gamma_c, \gamma_b) =g^\varsigma_i(\gamma_c,\gamma_b)\conc  
    \langle  1\rangle \conc \nu_{\Theta(b,c,i,1)}\conc \langle  
  \underbrace{0,\ldots,0}_c\rangle \conc \langle  
  \underbrace{1,\ldots,1}_{\ell-c} \rangle
\end{array}\]
(note: $\nu_\Theta$ not $\nu^*_\Theta$).
\item[$(*)_{18}^{\rm d}$] If $b<\ell$, $c<\ell$  and $b\neq c$ and
  $i<\iota$, then  $h_i(\beta_b,\gamma_c)=h_i(\gamma_c, \beta_b)=
  M_\xi=M_\varsigma$,  and 
\[\begin{array}{l}
g_i(\beta_b,\gamma_c) =g ^\xi_i(\beta_b,\beta_c)\conc \langle 
1\rangle \conc \nu_{\Theta(b,c,i,2)}\conc \langle  
  \underbrace{0,\ldots,0}_c\rangle \conc \langle  
  \underbrace{1,\ldots,1}_{\ell-c} \rangle\quad \mbox{ and }\\
 g_i(\gamma_c, \beta_b) =g^\varsigma_i(\gamma_c,\gamma_b) \conc 
    \langle  1\rangle \conc \nu^*_{\Theta(b,c,i,2)}\conc \langle  
  \underbrace{0,\ldots,0}_\ell\rangle.
\end{array}\]
\item[$(*)_{18}^{\rm e}$] If $b<\ell$ and $i<\iota$, then
  $h_i(\beta_b,\gamma_b)=h_i(\gamma_b, \beta_b)= M_\xi=M_\varsigma$,  and  
\[\begin{array}{l}
g_i(\beta_b,\gamma_b) =\eta ^\xi_{\beta_b}\conc \langle 
1\rangle \conc \nu_{\Theta(b,b,i,2)}\conc \langle  
  \underbrace{0,\ldots,0}_b\rangle \conc \langle  
  \underbrace{1,\ldots,1}_{\ell-b} \rangle\quad \mbox{ and }\\
 g_i(\gamma_b, \beta_b) =\eta^\varsigma_{\gamma_b}\conc 
    \langle  1\rangle \conc \nu^*_{\Theta(b,b,i,2)}\conc \langle  
  \underbrace{0,\ldots,0}_\ell\rangle.
\end{array}\]
\end{enumerate}
We also set:
\begin{enumerate}
\item[$(*)_{19}$] $r_m=r^\xi_m$ for $m<M_\xi$, $r_{M_\xi}= n$ and if
  $m<M_\xi$, then 
\[\begin{array}{ll}
t_m=&\big\{\eta\in {}^{n\geq} 2: \eta\rest n_\xi\in t^\xi_m\ \wedge\ (\forall
j<n)( n\leq j<|\eta| \Rightarrow  \eta(j)=0)\big\}\ \cup\ \\
& \big\{g_i(\delta,\vare)\rest n': (\delta,\vare)\in w^{\langle
  2\rangle},\  i<\iota, \mbox{ and }n'\leq n\mbox{ and } h_i(\delta,\vare)=
  m\big\} 
\end{array}\]
and 
\[t_{M_\xi}=\big\{g_i(\delta,\vare)\rest n': (\delta,\vare)\in w^{\langle 
  2\rangle},\  i<\iota, \mbox{ and }n'\leq n\mbox{ and } h_i(\delta,\vare)=
M_\xi\big\}. \]
\end{enumerate}
Now letting $\cM$ be defined by $(*)_8$ we claim that
\[q=\big(w,n,M,\bar{\eta},\bar{t},\bar{r},\bar{h},\bar{g},\cM\big)\in \bbP.\]  
Demands $(*)_1$--$(*)_8$ are pretty straightforward.
\bigskip

\noindent {\bf RE $(*)_9$\ :}\quad To justify clause $(*)_9$, suppose that
$\bm(\ell,w'), \bm(\ell,w'')\in \cM$, $\rho\in {}^\ell 2$ and
$\bm(\ell,w')\doteqdot \bm(\ell,w'')+\rho$, and consider the following three
cases. 
\medskip

\noindent {\sc Case 1:}\quad $w'\subseteq w_\xi$\\
Then for each $(\delta,\vare)\in (w')^{\langle 2\rangle}$ we have
$h_i(\delta,\vare)<M_\xi$, so this also holds for $(\delta,\vare)\in
(w'')^{\langle 2\rangle}$. Consequently, either $w''\subseteq w_\xi$ or
$w''\subseteq w_\varsigma$. 

If $w''\subseteq w_\xi$, then let $\ell'=\min(\ell,n_\xi)$ and consider
$\bm^{p_\xi}(w',\ell'), \bm^{p_\xi}(w'',\ell')\in \cM_\xi$. Using clause
$(*)_9$ for $p_\xi$ we immediately obtain the desired conclusion. 

If $w''\subseteq w_\varsigma$, then we let $\ell'=\min(\ell,n_\xi)$ and 
we consider $\bm^{p_\xi}(w',\ell')$ and $\bm^{p_\xi}(\pi^{-1}[w''],\ell')$
(both from $\cM_\xi$). By $(*)_{14}$, clause $(*)_9$ for $p_\xi$ applies to
them and we get 
\begin{itemize}
\item $\rk(w')=\rk(\pi^{-1}[w''])$, $\zeta(w')=\zeta(\pi^{-1}[w''])$,
$k(w')=k(\pi^{-1}[w''])$ and 
\item if $\delta\in w'$, $\vare\in \pi^{-1}[w'']$ are such that $|\delta\cap
  w'|=k(w') =k(\pi^{-1}[w''])=|\vare\cap\pi^{-1}[w'']|$,  then
  $(\eta_\delta^{p_\xi}\rest \ell')+\rho=\eta_\vare^{p_\xi} \rest\ell'$.  
\end{itemize}
By $(*)_{14}$ this immediately implies the desired conclusion. 
\medskip

\noindent {\sc Case 2:}\quad $w'\subseteq w_\varsigma$\\
Same as the previous case, just interchanging $\xi$ and $\varsigma$. 
\medskip

\noindent {\sc Case 3:}\quad $w'\setminus w_\xi\neq \emptyset \neq
w'\setminus w_\varsigma$\\
Then for some $(\delta,\vare)\in (w')^{\langle 2\rangle}$ we have
$h_i(\delta,\vare)=M_\xi$, so necessarily $\ell=r_{M_\xi}=n$. Hence
$\{\eta_\alpha:\alpha\in w'\}=\{\eta_\alpha+\rho:\alpha\in w''\}$ and since
$|w'|\geq 5$, the linear independence of $\bar{\eta}$ implies $\rho={\mathbf
  0}$ and $w'=w''$ and the desired conclusion follows. 
\bigskip

\noindent {\bf RE $(*)_{10}$\ :}\quad Let us prove clause $(*)_{10}$
now. Suppose that $\bm(\ell_0,w'),\bm(\ell_1,w'') \in\cM$, $\delta\in w'$,
$|\delta\cap w'|= k(w')$, $\rk(w')=-1$, and  $\bm(\ell_0,w')\sqsubseteq^* 
\bm(\ell_1,w'')$. Assume towards contradiction that there are
$\vare_0,\vare_1\in  w''$ such that 
\begin{enumerate}
\item[$(\otimes)_0$] $\eta_{\vare_0}\rest \ell_1\neq \eta_{\vare_1}\rest
  \ell_1$ and $\eta_\delta\rest\ell_0\vtl \eta_{\vare_0}$ and
  $\eta_\delta\rest\ell_0\vtl \eta_{\vare_1}$. 
\end{enumerate}
Without loss of generality $|w''|=|w'|+1\geq 6$.

Since we must have $\ell_0<n$, for no $\alpha,\beta\in w'$ we can have
$h_i(\alpha,\beta)=M_\xi$. Therefore either $w'\subseteq w_\xi$ or
$w'\subseteq w_\varsigma$. Also, 
\begin{enumerate}
\item[$(\otimes)_1$] if $(\alpha,\beta)\in (w'')^{\langle 
  2\rangle}\setminus \{(\vare_0,\vare_1),(\vare_1, \vare_0)\}$ then 
$h_i(\alpha,\beta)<M_\xi$ for $i<\iota$.  
\end{enumerate}
Note that 
\begin{enumerate}
\item[$(\otimes)_2$] if $(\alpha,\beta)\in (w_\xi)^{\langle 2\rangle} \cup 
(w_\varsigma)^{\langle 2\rangle}$ then $\min(\{\ell:\eta_\alpha (\ell) \neq 
\eta_\beta(\ell)\})<n_\xi$ and there are no repetitions in the sequence
$\langle g_i(\alpha,\beta)\rest n_\xi,g_i(\beta,\alpha)\rest n_\xi:
i<\iota\rangle$.   
\end{enumerate}
Let $\ell^*=\min(\ell_1,n_\xi)$. 

Now, if $w'\cup w''\subseteq w_\xi$, then considering
$\bm(\ell_0,w')$ and $\bm(\ell^*,w'')$ (and remembering
$(\otimes)_2$) we see that $\ell_0<n_\xi$,
$\bm^{p_\xi}(\ell_0,w')\sqsubseteq^*\bm^{p_\xi}(\ell^*,w'')$ and they
have the property contradicting $(*)_{10}$ for $p_\xi$.   

If $w'\cup w''\subseteq w_\varsigma$, then in a similar manner we get
contradiction with $(*)_{10}$ for $p_\varsigma$. 

If $w'\subseteq w_\xi$ and $w''\subseteq w_\varsigma$ then one easily
verifies that $\ell_0<n_\xi$ and
$\bm^{p_\xi}(\ell_0,w')\sqsubseteq^*
\bm^{p_\xi}(\ell^*,\pi^{-1}[w''])$  provide a counterexample for
$(*)_{10}$ for $p_\xi$. Similarly if $w'\subseteq w_\varsigma$ and
$w''\subseteq w_\xi$.  

Consequently, the only possibility left is that $w''\setminus w_\xi\neq
\emptyset \neq w''\setminus w_\varsigma$ and it follows from
$(\otimes)_1$ that $|w''\setminus w_\xi| = |w''\setminus
w_\varsigma|=1$. Let $\{\beta_b\}=w''\setminus w_\varsigma$ and
$\{\gamma_c\}= w''\setminus w_\xi$; then $\{\vare_0,\vare_1\}=
\{\beta_b,\gamma_c\}$.  

Assume $w'\subseteq w_\xi$ (the case when $w'\subseteq w_\varsigma$ can be
handled similarly). If we had $b\neq c$, then
$\eta_{\beta_b}\rest n_\xi=\eta_{\beta_b}^{p_\xi} \rest n_\xi \neq
\eta_{\gamma_c}^{p_\varsigma} \rest n_\xi= \eta_{\gamma_c} \rest
n_\xi$. Since $w''\subseteq (w_\xi\cap  w_\varsigma)\cup
\{\beta_b,\gamma_c\}$ we could see that 
$\ell_0<n_\xi$ and $\bm^{p_\xi}(\ell_0,w') \sqsubseteq^*
\bm^{p_\xi}(\ell^*,\pi^{-1}[w''])$ would provide a  counterexample for
$(*)_{10}$ for $p_\xi$. Consequently, $b=c$ and $\ell_1>n_\xi$. Now,
remembering $(\otimes)_0$, $\eta^{p_\xi}_\delta\rest
\ell_0=\eta^{p_\xi}_{\beta_b}\rest \ell_0$ and
$\bm^{p_\xi}(\ell_0,w')\doteqdot \bm^{p_\xi}(\ell_0,w'' \setminus 
\{\gamma_b\})$, so by $(*)_9$ for $p_\xi$ we conclude
\[\rk(w''\setminus\{\gamma_b\})=-1\quad 
\mbox{ and }\quad |\beta_b\cap (w''\setminus\{\gamma_b\})|=
k(w''\setminus\{\gamma_b\}).\]
Let $\zeta^*=\zeta(w''\setminus \{\gamma_b\})$ and
$k^*=k(w''\setminus\{\gamma_b\})$. For $\vare\in A\setminus\{\xi\}$ let
$\pi^\vare:w_\xi\longrightarrow w_\vare$ be the order isomorphism and let
$\gamma(\vare)\in \pi^\vare\big[w''\setminus\{\gamma_b\}\big]$ be such that
$|\pi^\vare\big[w''\setminus\{\gamma_b\}\big]\cap\gamma(\vare)|= k^*$ 
(necessarily $\gamma(\vare)= \pi^\vare(\beta_b) \in w_\vare\setminus
w_\xi$). Then  
\begin{itemize}
\item $\pi^\vare[w''\setminus\{\gamma_b\}]=\big(w''\cap (w_\xi\cap 
w_\vare)\big)\cup\{ \gamma(\vare)\}=w''\setminus\{\beta_b,\gamma_b\}
\cup\{\gamma(\vare)\}$, 
\item $\rk\Big(\pi^\vare[w''\setminus\{\gamma_b\}]\Big)=-1$, and 
$\zeta\Big(\pi^\vare[w''\setminus\{\gamma_b\}]\Big)=\zeta^*$, and 
\item $k\Big(\pi^\vare[w''\setminus\{\gamma_b\}]\Big)=k^*=|\pi^\vare [w''
  \setminus\{\gamma_b\}] \cap \gamma(\vare)|$. 
\end{itemize}
Hence $\bbM\models R_{|w'|,\zeta^*}\big [w''\setminus\{\beta_b,\gamma_b\}
\cup \{\gamma(\vare)\}\big]$ for each $\vare\in A\setminus
\{\xi\}$. Consequently, the set 
\[\Big\{\alpha<\lambda:\bbM\models  R_{|w'|,\zeta^*}\big [w''\setminus\{\beta_b,\gamma_b\}
\cup \{\alpha\}\big]\Big\}\]
is uncountable, contradicting $(\circledast)_{\rm e}$. 
\bigskip

\noindent {\bf RE $(*)_{11}$\ :}\quad Let us argue that $(*)_{11}$ is
satisfied as well and for this suppose that $\rho^0_i,\rho^1_i\in
\bigcup\limits_{m<M} (t_m\cap {}^n2)$ (for $i<\iota$) are such that   
  \begin{enumerate}
  \item[(a)] there are no repetitions in $\langle \rho^0_i,\rho^1_i:
    i<\iota\rangle$, and 
  \item[(b)] $\rho^0_i+\rho^1_i=\rho^0_j+\rho^1_j$ for $i<j<\iota$.
  \end{enumerate}
Clearly, if all $\rho^0_i,\rho^1_i$ are form $\rho\conc\langle
\underbrace{0,\ldots, 0}_N\rangle$, then we may use condition $(*)_{11}$
for $p_\xi$ and conclude that for some $\alpha_0,\alpha_1\in w_\xi$ we have   
\[\big\{\{\rho^0_i,\rho^1_i\}:i<\iota\big\}=\big\{\{g_i(\alpha_0, \alpha_1),  
g_i(\alpha_1,\alpha_0)\}: i<\iota\big\}.\]
So assume that we are not in the situation when all  $\rho^0_i,\rho^1_i$ are
form $\rho\conc\langle \underbrace{0,\ldots, 0}_N\rangle$.

Note that if $\rho\in \bigcup\limits_{m<M} (t_m\cap {}^n 2)$ and
$\rho(n_\xi)=0$, then $\rho\rest [n_\xi,n)={\mathbf 0}$. Hence, remembering 
definitions in $(*)_{18}$, if $\rho_0,\rho_1,\rho_2,\rho_3\in
\bigcup\limits_{m<M} (t_m\cap {}^n 2)$, $\rho_0+\rho_1=\rho_2+\rho_3$ and
$\rho_0(n_\xi)=0$ but  $\rho_1(n_\xi)=1$, then $\{\rho_0,\rho_1\}=
\{\rho_2,\rho_3\}$. Therefore, under current assumption, we must have
$\rho^0_i(n_\xi)=\rho^1_i(n_\xi)=1$ for all $i<\iota$. Define 

$B=\{(\alpha_a,\gamma_c):a<k\ \&\ c<\ell\}$,

$C=\{(\gamma_b,\gamma_c):b<c<\ell\}$,

$D=\{(\beta_b,\gamma_c): b<\ell\ \&\ c<\ell\ \&\ b\neq c\}$, 

$E=\{(\beta_b,\gamma_b): b<\ell\}$. 

\noindent (These four sets correspond to the conditions $(*)_{18}^{\rm 
  b}$--$(*)_{18}^{\rm e}$ in the definition of $g_i$.) Clearly,
$\rho^0_i(n_\xi)=\rho^1_i(n_\xi)=1$ implies that 
\[\rho^0_i,\rho^1_i\in
\{g_j(\vare_0,\vare_1), g_j(\vare_1,\vare_0):(\vare_0,\vare_1)\in B\cup
C\cup D\cup E,\ j<\iota\}.\]  
Note also that for each $d<N_0-1$,
\begin{enumerate}
\item[$(\boxtimes)_a$] the set $\{\rho\in \bigcup\limits_{m<M} (t_m\cap {}^n
  2): \rho \rest \big(n_\xi,n_\xi+N_0\big] =\nu_d\}$ is not empty but it has
  at most two elements, and   
\item[$(\boxtimes)_b$] $|\{\rho\in \bigcup\limits_{m<M} (t_m\cap {}^n 2): \rho \rest
    \big(n_\xi,n_\xi+N_0\big]=\nu_d\}|=2$ if and only if
    $d=\Theta(b,c,i,1)$ for some $b<c<\ell$ and $i<\iota$, and
\item[$(\boxtimes)_c$] the set $\{\rho\in \bigcup\limits_{m<M} (t_m\cap {}^n
  2): \rho \rest \big(n_\xi,n_\xi+N_0\big] =\nu^*_d\}$ has at most one
  element, and  
\item[$(\boxtimes)_d$] $\{\rho\in \bigcup\limits_{m<M} (t_m\cap {}^n 2):
  \rho \rest \big(n_\xi,n_\xi+N_0\big]=\nu^*_d\}=\emptyset$ if and only if 
    $d=\Theta(b,c,i,1)$ for some $b<c<\ell$ and $i<\iota$. 
\end{enumerate}
Now consider $\rho_i^0\rest \big(n_\xi,n_\xi+N_0\big]$,
$\rho_i^1\rest \big(n_\xi+1,n_\xi+N_0\big]$ for $i<\iota$. 
\medskip

If for some $(i,x)\neq (j,y)$ we have $\rho_i^x\rest
\big(n_\xi, n_\xi+N_0\big] =\rho_j^y\rest \big(n_\xi,n_\xi+N_0\big]$, then
(using $(\boxtimes)_a$--$(\boxtimes)_d$ and the linear independence of
$\nu_d$'s) we must also have that 
\[\rho_i^0\rest \big(n_\xi,n_\xi+N_0\big] =\rho_i^1\rest
\big(n_\xi,n_\xi+N_0 \big]\quad\mbox{  for all }i<\iota.\]
Thus, for every $i<\iota$ there are $b<c<\ell$ and $j<\iota$ such that 
\[\{\rho^0_i,\rho^1_i\}=\{g_j(\gamma_b,\gamma_c), g_j(\gamma_c,
\gamma_b)\}.\] 
Since for $b<c<\ell$ we have 
\[\big(g_j(\gamma_b,\gamma_c)+ g_j(\gamma_c,\gamma_b)\big)\rest
(N_0,N_0+\ell] =\langle \underbrace{0,\ldots, 0}_b\rangle\conc \langle
\underbrace{1,\ldots, 1}_{c-b}\rangle\conc \langle \underbrace{0,\ldots,
  0}_{\ell-c} \rangle\]
we immediately get that (in the current situation) for some $b<c<\ell$
we have    
\[\big\{\{\rho^0_i,\rho^1_i\}:i<\iota\big\}=\big\{\{
g_i(\gamma_b,\gamma_c),g_i(\gamma_c,\gamma_b)\}: i<\iota\big\}.\] 
\medskip

So let us assume that $\rho_i^x\rest \big(n_\xi, n_\xi+N_0\big] \neq
\rho_j^y\rest \big(n_\xi,n_\xi+N_0\big]$ for all distinct 
$(i,x),(j,y)\in \iota\times 2$. Since $\{{\bf 1},\nu_0,\ldots,
\nu_{N_0-2}\}$ are linearly independent we may use Lemma  \ref{litlem}(2) to
conclude that  
\[\Big\{\big\{\rho_i^0\rest \big(n_\xi,n_\xi+N_0\big], \rho_i^1\rest
\big(n_\xi, n_\xi+N_0\big]\big\}: i<\iota\Big\}\subseteq \Big\{\big\{\nu_d,
\nu^*_d\big\}: d<N_0-1\Big\}.\] 
Consequently, we easily deduce that 
\[\big\{\{\rho^0_i,\rho^1_i\}:i<\iota\big\}\subseteq \big\{\{g_i(\vare_0,
\vare_1), g_i(\vare_1,\vare_0)\}: i<\iota\ \&\ (\vare_0,\vare_1)\in
B\cup D\cup E \big\}.\]
Using the linear independence of $\eta^\xi_\vare$'s and the definitions of
$g_i$'s in $(*)_{18}$ one checks that the three sets 

$\{g_i(\vare_0,\vare_1)+g_i(\vare_1,\vare_0):(\vare_0,\vare_1)\in B,\
i<\iota\}$, 

$\{g_i(\vare_0,\vare_1)+g_i(\vare_1,\vare_0):(\vare_0,\vare_1)\in D,\
i<\iota\}$, 

$\{g_i(\vare_0,\vare_1)+g_i(\vare_1,\vare_0):(\vare_0,
\vare_1)\in E,\ i<\iota\}$

\noindent are pairwise disjoint. Therefore,
$\big\{\{\rho^0_i,\rho^1_i\}:i<\iota\big\}$ must be included in (exactly)
one of the sets 

$\big\{\{g_i(\vare_0, \vare_1),  g_i(\vare_1,\vare_0)\}: i<\iota\ \&\
(\vare_0, \vare_1)\in B\big\}$,

$\big\{\{g_i(\vare_0, \vare_1),  g_i(\vare_1,\vare_0)\}: i<\iota\ \&\
(\vare_0, \vare_1)\in D\big\}$, or 

$\big\{\{g_i(\vare_0, \vare_1),  g_i(\vare_1,\vare_0)\}: i<\iota\ \&\
(\vare_0, \vare_1)\in E\big\}$.

\noindent But now we easily check that for some $(\vare_0,\vare_1)\in 
B\cup D\cup E$ we must have  
\[\big\{\{\rho^0_i,\rho^1_i\}:i<\iota\big\}=\big\{\{g_i(\vare_0,
\vare_1), g_i(\vare_1,\vare_0)\}: i<\iota\big\}.\]

This completes the verification that $q=\big(w,n,M,\bar{\eta},
\bar{t},\bar{r}, \bar{h},\bar{g},\cM\big)\in \bbP$. 
It should be clear that $q$ is stronger than both $p_\xi$ and $p_\varsigma$.  
\end{proof}

Define $\bbP$--names $\name{T}_m$ and $\name{\eta}_\alpha$ (for
$m<\omega$ and $\alpha<\lambda$) by

$\forces_\bbP$`` $\name{T}_m= \bigcup\{t^p_m: p\in
  \name{G}_\bbP\ \wedge\ m<M^p\}$ '', and 

$\forces_\bbP$`` $\name{\eta}_\alpha= \bigcup\{\eta^p_\alpha:
  p\in \name{G}_\bbP\ \wedge\ \alpha\in w^p\}$ ''.

\begin{claim}
  \label{cl10}
  \begin{enumerate}
\item For each $m<\omega$ and $\alpha<\lambda$,

$\forces_\bbP$`` $\name{\eta}_\alpha\in\can$ and
$\name{T}_m\subseteq {}^{\omega>}2$ is a tree without terminal nodes ''. 
\item $\forces_{\bbP}$`` $\bigcup\limits_{m<\omega}
  \lim(\name{T}_m)$ is  a $2\iota$--{\bf npots} set ''.  
  \end{enumerate}
\end{claim}

\begin{proof}[Proof of the Claim]
(1) \quad By Claim \ref{cl8} (and the definition of the order in $\bbP$). 
\medskip

\noindent (2)\quad Let $G\subseteq \bbP$ be a generic filter over $\bV$ and 
let us work in $\bV[G]$. 

Let $k=2\iota$ and $\bar{T}=\langle (\name{T}_m)^G:m<\omega\rangle$. 

Suppose towards contradiction that $B=\bigcup\limits_{m<\omega}
\lim\big((\name{T}_m)^G\big)$ is  a $k$--{\bf pots} set. Then, by
Proposition \ref{eqnd},  $\NDRK(\bar{T})=\infty$. Using Lemma
\ref{lemonrk}(5,), by induction on $j<\omega$  we choose $\bm_j, \bm_j^*\in
\Mtk$ and $p_j\in G$ such that   
\begin{enumerate}
\item[(i)] $\ndrk(\bm_j)\geq\omega_1$, $|u_{\bm_j}|>5$ and $\bm_j\sqsubseteq 
  \bm^*_j \sqsubseteq \bm_{j+1}$,  
\item[(ii)] for each $\nu\in u_{\bm^*_j}$ the set $\{\eta\in u_{\bm_{j+1}}: 
  \nu\vtl \eta\}$ has at least two elements, 
\item[(iii)] $p_j\leq p_{j+1}$, $\ell_{\bm_j}\leq\ell_{\bm^*_j}= n^{p_j}<
  \ell_{\bm_{j+1}}$ and  $\rng(h_i^{\bm_j})\subseteq M^{p_j}$ for all
  $i<\iota$,  and   
\item[(iv)] $|\{\eta\rest n^{p_j}:\eta\in u_{\bm_{j+1}}\}|=|u_{\bm_j}|=|
  u_{\bm^*_j}|$.  
\end{enumerate}
Then, by (iii)+(iv), $\bm_j,\bm^*_j\in {\mathbf
  M}^{n^{p_j}}_{\bar{t}^{p_j},k}$. It follows from Claim \ref{cl7} that for
some $w_j\subseteq w^{p_j}$ and $\rho_j\in {}^{n^{p_j}}2$ we have
$(\bm_j^*+\rho_j) \doteqdot \bm^{p_j} (n^{p_j}, w_j)\in \cM^{p_j}$.  

Fix $j$ for a moment and consider $\bm^{p_j} (n^{p_j}, w_j)\in
\cM^{p_j}$ and $\bm^{p_{j+1}} (n^{p_{j+1}}, w_{j+1})\in \cM^{p_{j+1}}$.
Since $(\bm_j^*+(\rho_{j+1}\rest n^{p_j}))\sqsubseteq
(\bm^*_{j+1}+\rho_{j+1}) \doteqdot \bm^{p_{j+1}} (n^{p_{j+1}}, w_{j+1})$,
we may choose $w^*_j\subseteq w_{j+1}$ such that   
\[(\bm_j^*+(\rho_{j+1}\rest n^{p_j}))\doteqdot \bm^{p_{j+1}} (n^{p_j},
w^*_j)\sqsubseteq^*\bm^{p_{j+1}}(n^{p_{j+1}},w_{j+1})\]
 (and the latter two belong to $\cM^{p_{j+1}}$).  Then also 
\[\bm^{p_{j+1}} (n^{p_j},w^*_j)\doteqdot \bm^{p_j} (n^{p_j}, w_j)+
  (\rho_j+\rho_{j+1}\rest n^{p_j})= \bm^{p_{j+1}}(n^{p_j}, w_j)+
  (\rho_j+\rho_{j+1}\rest n^{p_j}),\]  
so by clause $(*)_9$ for $p_{j+1}$ we have 
\[\rk(w^*_j)=\rk(w_j).\] 
Clause (ii) of the choice of $\bm_{j+1}$ implies that 
\[(\forall \gamma\in w^*_j)(\exists\delta\in w_{j+1}\setminus
  w_j^*)( \eta^{p_{j+1}}_\gamma\rest n ^{p_j}= \eta_\delta^{p_{j+1}}
  \rest n^{p_j}).\] 
Let $\delta(\gamma)$ be the smallest $\delta\in w_{j+1}\setminus w_j^*$
with the above property and let $w^*_j(\gamma)=(w^*_j\setminus
\{\gamma\})\cup \{\delta(\gamma)\}$. Then, for $\gamma\in w^*_j$,
$\bm^{p_{j+1}}(n^{p_j}, w^*_j(\gamma))\in \cM^{p_{j+1}}$ and 
\[\bm^{p_{j+1}}(n^{p_j},  w^*_j)\doteqdot
  \bm^{p_{j+1}}(n^{p_j},w^*_j(\gamma))\sqsubseteq^*
  \bm^{p_{j+1}}(n^{p_{j+1}}, w_{j+1}).\] 
So by clause $(*)_9$ we know that for each $\gamma\in w_j$:
\[\rk(w^*_j(\gamma))=\rk(w^*_j),\quad \zeta(w^*_j(\gamma))
  =\zeta(w_j^*), \quad\mbox{ and }\quad k(w^*_j(\gamma))
  =k(w_j^*).\]
Let $n=|w^*_j|$, $\zeta=\zeta(w_j^*)$, $k=k(w_j^*)$, and let
$w_j^*=\{\alpha_0,\ldots,\alpha_k,\ldots, \alpha_{n-1}\}$ be the increasing
enumeration. Let $\alpha^*_k=\delta(\alpha_k)$. Then clause $(*)_9$ also
gives that $w_j^*(\alpha_k)=\{\alpha_0,\ldots,\alpha_{k-1},\alpha_k^*,
\alpha_{k+1}, \ldots, \alpha_{n-1}\}$ is the increasing enumeration.  Now, 
\[\begin{array}{l}
\bbM\models R_{n,\zeta}[\alpha_0,\ldots,\alpha_{k-1},\alpha_k,
\alpha_{k+1}, \ldots, \alpha_{n-1}]\qquad \mbox{ and}\\
\bbM\models R_{n,\zeta}[\alpha_0,\ldots,\alpha_{k-1},\alpha_k^*,
\alpha_{k+1}, \ldots, \alpha_{n-1}],
\end{array}\]
and consequently if $\rk(w^*_j)\geq 0$, then 
\[\rk(w_{j+1})\leq \rk(w^*_j\cup\{\alpha^*_k\})<\rk(w^*_j)=\rk(w_j)\] 
(remember $(\circledast)_{\rm d}$). 
\medskip

Now, unfixing $j$, suppose that we constructed $w_{j+1},w^*_j$ for all
$j<\omega$. It follows from our considerations above that for some
$j_0<\omega$ we must have:
\begin{enumerate}
\item[(a)] $\rk(w^*_{j_0})=-1$, and  
\item[(b)] $\bm^{p_{j_0+1}}(n^{p_{j_0}}, w^*_{j_0})\sqsubseteq^*
  \bm^{p_{j_0+1}}(n^{p_{j_0+1}}, w_{j_0+1})$ (and both belong to
  $\cM^{p_{j_0+1}}$), 
\item[(c)] for every $\alpha\in w^*_{j_0}$ we have 
\[\big|\big\{\beta\in w_{j_0+1}:\eta^{p_{j_0+1}}_\alpha\rest n^{p_{j_0}}
\vtl \eta^{p_{j_0+1}}_\beta\big\}\big| >1.\]  
\end{enumerate}
However, this contradicts clause $(*)_{10}$ (for $p_{j_0+1}$). 
\end{proof}
\end{proof}

\begin{corollary}
\label{MAgives}
Assume ${\bf MA}+\neg{\bf CH}$, $\lambda<\con$ and ${\rm
  NPr}_{\omega_1}(\lambda)$, $3\leq\iota<\omega$. Then there exists a
$\Sigma^0_2$ $2\iota$--{\bf npots}--set $B\subseteq\can$ which has $\lambda$
many pairwise  $2\iota$--nondisjoint translations. 
\end{corollary}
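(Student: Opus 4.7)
The plan is to combine Theorem \ref{522fortranslate} with Martin's axiom. First, I apply Theorem \ref{522fortranslate} to obtain a ccc---actually Knaster, by Claim \ref{cl9}---forcing notion $\bbP$ of size $\lambda$ forcing the existence of a $\Sigma^0_2$ $2\iota$--\textbf{npots}--set together with $\lambda$ many pairwise $2\iota$--nondisjoint translations. Since $|\bbP|=\lambda<\con$, ${\bf MA}+\neg{\bf CH}$ supplies a filter $G\subseteq\bbP$ meeting the family $\cD=\{D^{n_0,M_0}_\beta:n_0,M_0<\omega,\ \beta<\lambda\}$ of dense sets from Claim \ref{cl8}(2), which has cardinality $\lambda$.

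Working in $V$, I would read off from $G$ exactly the objects that one would extract in a generic extension: set $T_m=\bigcup\{t^p_m:p\in G\ \wedge\ m<M^p\}$ for $m<\omega$, $\eta_\alpha=\bigcup\{\eta^p_\alpha:p\in G\ \wedge\ \alpha\in w^p\}$ for $\alpha<\lambda$, and $B=\bigcup_{m<\omega}\lim(T_m)$. Since $G$ meets every $D^{n_0,M_0}_\beta$, each $T_m\subseteq {}^{\omega>}2$ is a tree without maximal nodes and each $\eta_\alpha\in\can$, so $B$ is $\Sigma^0_2$. The linear independence in clause $(*)_2$ of any $p\in G$ guarantees the $\eta_\alpha$'s are pairwise distinct elements of $\can$, and the amalgamated maps $G_i(\alpha,\beta)=\bigcup\{g^p_i(\alpha,\beta):p\in G\}$ yield, via clauses $(*)_6$--$(*)_7$, at least $2\iota$ pairwise distinct elements in $(B+\eta_\alpha)\cap(B+\eta_\beta)$ for any distinct $\alpha,\beta<\lambda$.

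The one substantive step---and the main obstacle---is to show that this concrete ground-model $B$ is $2\iota$--\textbf{npots} in $V$, not merely in some generic extension. My plan here is to re-run the argument of Claim \ref{cl10}(2) verbatim inside $V$: assuming for contradiction that $B$ is $2\iota$--\textbf{pots}, Proposition \ref{eqnd} gives $\NDRK(\bar T)=\infty$; then one constructs inductively $\bm_j,\bm^*_j\in\Mtk$ (with $k=2\iota$) together with witnessing $p_j\in G$ by invoking Lemma \ref{lemonrk}(5), Claim \ref{cl7}, and clause $(*)_9$, until the configuration at some finite stage contradicts clause $(*)_{10}$ of $p_{j_0+1}$. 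The point I would need to verify with care is that every appeal to ``$G$'' in that argument is really an appeal to ``$G$ is a filter meeting each $D^{n_0,M_0}_\beta$''---no genuine $V$-genericity is ever used---so the proof transplants without change to the MA-filter situation and delivers the corollary.
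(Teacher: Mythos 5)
Your proposal is correct and is exactly the ``standard modification'' the paper has in mind. Replacing the $V$-generic filter with an MA-filter meeting the $\lambda$ dense sets $D^{n_0,M_0}_\beta$, reading off $\bar T$, the $\eta_\alpha$ and $B$ in $V$, and checking that the contradiction in Claim \ref{cl10}(2) (via Proposition \ref{eqnd}, a ZFC theorem) uses only filter-directedness and density-meeting rather than genuine genericity is precisely the intended argument.
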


\begin{proof}
Standard modification of the proof of Theorem \ref{522fortranslate}.   
\end{proof}

\begin{corollary}
\label{corforotps}
Assume ${\rm NPr}_{\omega_1}(\lambda)$ and $\lambda=\lambda^{\aleph_0}
<\mu= \mu^{\aleph_0}$, $3\leq\iota<\omega$. Then there is a ccc forcing
notion $\bbQ$ of size $\mu$ forcing that  
\begin{enumerate}
\item[(a)] $2^{\aleph_0}=\mu$ and 
\item[(b)] there is a $\Sigma^0_2$ $2\iota$--{\bf npots}--set
  $B\subseteq\can$ which has $\lambda$ many pairwise $2\iota$--nondisjoint
  translates but not $\lambda^+$ such translates. 
\end{enumerate}
\end{corollary}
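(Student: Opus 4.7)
The plan is to form the ccc forcing notion $\bbQ := \bbP \times \bbC_\mu$, where $\bbP$ is the ccc forcing of size $\lambda$ supplied by Theorem \ref{522fortranslate} (applied to $\lambda$ and $\iota$) and $\bbC_\mu$ is the standard Cohen forcing of size $\mu$. By Claim \ref{cl9}, $\bbP$ has the Knaster property, as does $\bbC_\mu$; hence the product $\bbQ$ is Knaster, in particular ccc, and $|\bbQ| = \mu$. The Cohen factor together with $\mu = \mu^{\aleph_0}$ yields $\forces_\bbQ \con = \mu$, which establishes clause (a).

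For the first half of clause (b), I would invoke Theorem \ref{522fortranslate} in the $\bbP$-factor to obtain a $\Sigma^0_2$ $2\iota$-\textbf{npots}-set $B \subseteq \can$ together with a sequence $\langle \eta_\alpha : \alpha < \lambda \rangle$ of distinct elements of $\can$ satisfying $|(\eta_\alpha + B) \cap (\eta_\beta + B)| \geq 2\iota$ for all $\alpha,\beta < \lambda$. Proposition \ref{proptostart}(4) says that being a $\Sigma^0_2$ $2\iota$-\textbf{npots}-set is absolute under further forcing, and the $\Sigma^1_1$ condition of $2\iota$-nondisjointness of translates (cf.\ Proposition \ref{proptostart}(2)) is upward absolute. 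Consequently, in $V^\bbQ$, $B$ remains a $\Sigma^0_2$ $2\iota$-\textbf{npots}-set with at least $\lambda$ pairwise $2\iota$-nondisjoint translates.

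The principal step, and the main obstacle, is the upper bound: $B$ admits no $\lambda^+$ pairwise $2\iota$-nondisjoint translates in $V^\bbQ$. I would argue by contradiction: since $B$ is $\Sigma^0_2$ $2\iota$-\textbf{npots} in $V^\bbQ$, Proposition \ref{eqnd} gives $\NDRK(\bar T) < \omega_1$; but by the contrapositive of Corollary \ref{corlar} applied at $\vare = \omega_1$, the existence of $\lambda^+$ pairwise $2\iota$-nondisjoint translates would force $\NDRK(\bar T) \ge \omega_1$, provided ${\rm Pr}_{\omega_1}(\lambda^+)$ holds in $V^\bbQ$. To secure ${\rm Pr}_{\omega_1}(\lambda^+)$ in $V^\bbQ$, I start with ${\rm Pr}^*_{\omega_1}(\beth_{\omega_1})$ in $\bV$ (Proposition \ref{522.1.8}(1)); Observation \ref{monot}, together with the observation that any witness of ${\rm NPr}^*_{\omega_1}$ on a larger universe restricts to one on a smaller universe, shows that ${\rm Pr}^*_{\omega_1}$ is monotone increasing in the cardinal parameter, giving ${\rm Pr}^*_{\omega_1}(\lambda^+)$ in $\bV$ in the expected regime $\lambda^+ \ge \beth_{\omega_1}$. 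Proposition \ref{522.1.10} then transfers ${\rm Pr}^*_{\omega_1}(\lambda^+)$ to $V^\bbQ$, and Proposition \ref{cl1.7-522}(4) finally yields ${\rm Pr}_{\omega_1}(\lambda^+)$ in $V^\bbQ$, completing the proof.
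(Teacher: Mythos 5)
Your setup for clause (a) and the lower bound of clause (b) is fine and matches the paper's proof (the paper writes $\bbP*\bbC_\mu$ rather than $\bbP\times\bbC_\mu$, but since Cohen forcing has the same definition in $\bV$ and $\bV^\bbP$ this is cosmetic). The problem is the upper bound.

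The route through Corollary \ref{corlar} requires ${\rm Pr}_{\omega_1}(\lambda^+)$ in $\bV^\bbQ$, and this is unavailable --- in fact it is false. The hypothesis ${\rm NPr}_{\omega_1}(\lambda)$ means there is a model on $\lambda$ whose ranks are all below some $\alpha^*<\omega_1$ (using regularity of $\omega_1$), i.e.\ ${\rm NPr}_{\alpha^*}(\lambda)$ holds. Proposition \ref{cl1.7-522}(2) then gives ${\rm NPr}_{\alpha^*+1}(\lambda^+)$, hence ${\rm NPr}_{\omega_1}(\lambda^+)$ already in $\bV$; and since the rank of a finite set in a fixed model is computed from countability statements that are absolute for ccc extensions, ${\rm NPr}_{\omega_1}(\lambda^+)$ persists to $\bV^\bbQ$. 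Your hedge ``in the expected regime $\lambda^+\ge\beth_{\omega_1}$'' does not save this: because $\beth_{\omega_1}$ is a limit cardinal and ${\rm NPr}_{\omega_1}(\lambda)$ forces $\lambda<\lambda_{\omega_1}\le\beth_{\omega_1}$, one always has $\lambda^+<\beth_{\omega_1}$, so the monotonicity of ${\rm Pr}^*_{\omega_1}$ never applies in the direction you need. The rank machinery simply cannot deliver the upper bound here.

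What makes the upper bound work in the paper is a different tool: Proposition \ref{proptostart}(3) (Shelah's Fact 1.16 from \cite{Sh:522}), which is a Cohen-forcing-specific absoluteness result. In $\bV^\bbP$ one has $\con\le\lambda^{\aleph_0}=\lambda$ (here the hypothesis $\lambda=\lambda^{\aleph_0}$ is used), so $\con^{\bV^\bbP}<\lambda^+\le\mu$, and Proposition \ref{proptostart}(3) applied to $\bbC_\mu$ over $\bV^\bbP$ with $\kappa=\lambda^+$ yields: if $B$ had $\lambda^+$ pairwise $2\iota$-nondisjoint translates in $\bV^\bbQ$, then $B$ would be $2\iota$-{\bf pots} there. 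Since Proposition \ref{proptostart}(4) guarantees $B$ stays $2\iota$-{\bf npots}, this is impossible. You should replace the Corollary \ref{corlar} argument with this.
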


\begin{proof}
Let $\bbP$ be the forcing notion given by Theorem \ref{522fortranslate} and
let $\bbQ=\bbP*\bbC_\mu$. Use Proposition \ref{proptostart}(4) to argue that
the set $B$ added by $\bbP$ is a {\bf npots}--set in $\bV^\bbQ$. By
\ref{proptostart}(3) this set  cannot have $\lambda^+$ pairwise
$2\iota$--nondisjoint translates, but it does have $\lambda$ many pairwise
$2\iota$--nondisjoint translates (by absoluteness). 
\end{proof}

\begin{remark}
It follows from Proposition \ref{proptostart}(1,2), that if there exists a
$\Sigma^0_2$ {\bf pots}--set $B\subseteq \can$ such that for some set
$A\subseteq \can$ we have $(B+a)\cap (B+b)\neq \emptyset$ for all $a,b\in
A$, then $\stnd(B)\subseteq\can\times\can$ is a $\Sigma^0_2$ set which
contains a $|A|$--square but no perfect   square. Thus Corollary
\ref{corforotps} is a slight generalization of Shelah \cite[Theorem 
1.13]{Sh:522}. 
\end{remark}

\section{Open questions}

\begin{problem}
Is it consistent that for every Borel set $B\subseteq \can$, 
\begin{enumerate}
\item[if] there is $H\subseteq \can$ of size $\aleph_1$ such that $|(B+x)\cap 
  (B+y)|\geq 6$ for all $x,y\in H$, 
\item[then] there is a perfect set $P$ such that $|(B+x)\cap (B+y)|\geq 
  6$ for all $x,y\in P$ ? 
\end{enumerate}   
(Compare this with Proposition \ref{proptostart}(3).)
\end{problem}

\begin{problem}
Is is consistent to have a Borel set $B\subseteq \can$ such that 
\begin{itemize}
\item for some uncountable set $H$, $(B+x)\cap (B+y)$ is uncountable for
  every $x,y\in H$, but 
\item for every perfect set $P$ there are $x,y\in P$ with $(B+x)\cap (B+y)$
  countable?
\end{itemize}
\end{problem}

\begin{problem}
Is it consistent to have a Borel set $B\subseteq \can$ such that 
\begin{itemize}
\item $B$ has uncountably many pairwise disjoint translations, but
\item there is no perfect of pairwise disjoint translations of $B$. ? 
\end{itemize}
\end{problem}


\end{document}